\documentclass[11pt]{amsart}
\usepackage{amssymb, enumerate, mdwlist, amsthm, amsmath, bm, graphicx}
\usepackage{xcolor,hyperref}
\hypersetup{
    colorlinks=true,
    citecolor=blue,
    linkcolor=red,
    urlcolor=orange,
}

\textwidth 150mm
\textheight 220mm
\evensidemargin\paperwidth
\advance\evensidemargin-\textwidth
\oddsidemargin.5\evensidemargin
\advance\oddsidemargin-1in
\evensidemargin\oddsidemargin

\topmargin\paperheight
\advance\topmargin-\textheight
\topmargin.5\topmargin
\advance\topmargin-1in

\theoremstyle{plain}
\newtheorem{theorem}{Theorem}[section]
\theoremstyle{plain}
\newtheorem{proposition}[theorem]{Proposition}
\theoremstyle{plain}
\newtheorem{lemma}[theorem]{Lemma}
\theoremstyle{plain}
\newtheorem{corollary}[theorem]{Corollary}
\theoremstyle{plain}
\newtheorem{problem}[theorem]{Problem}
\theoremstyle{plain}

\theoremstyle{definition}
\newtheorem{definition}[theorem]{Definition}
\theoremstyle{remark}
\newtheorem{remark}[theorem]{Remark}
\theoremstyle{remark}
\newtheorem{example}[theorem]{Example}
\theoremstyle{plain}
\newtheorem{mtheorem}{Theorem}

\theoremstyle{plain}
\newtheorem{mcorollary}[mtheorem]{Corollary}
\theoremstyle{definition}

\title[Avoiding a shape of a system of equations]{Avoiding a shape, and the slice rank method for a system of equations}
\author{Masato Mimura\and Norihide Tokushige}
%\thanks{The first-named author is supported in part by JSPS KAKENHI Grant Number JP17H04822.}

\address{Masato Mimura\\
Mathematical Institute, Tohoku University, Japan}
\email{m.masato.mimura.m@tohoku.ac.jp}
\address{Norihide Tokushige\\
College of Education, Ryukyu University, Japan}
\email{hide@u-ryukyu.ac.jp}
\date{\today}

\begin{document}

\maketitle

\begin{abstract}
Fix a vector space over a finite field and a system of linear equations. We provide estimates, in terms of the dimension of the vector space, of the maximum of the sizes of subsets of the space that do not admit solutions of the system consisting of more than one point. That from above is derived by slice rank method of Tao; to obtain one from below, we define the notion of `dominant reductions' of the system. Furthermore, by adapting a recent argument of Sauermann, we make an estimation of the maximum of the sizes of subsets that are `W shape'-free, that means, there exist no five distinct points forming two overlapping parallelograms.
\end{abstract}

\section{Introduction}\label{section=introduction}
It is a historical and significant problem in extremal combinatorics to study the maximum of the sizes of subsets in a finite abelian group $G$ that does not contain a specified shape, such as a non-degenerate \textit{three-term arithmetic progression} (hereafter, we abbreviate it as a `$3$-AP'). In this paper, we consider that case where $G=\mathbb{F}_p^n$, where $p$ is a prime number. Inspired by the breakthrough by Croot--Lev--Pach \cite{CrootLevPach}, Ellenberg--Gijswijt \cite{EllenbergGijswijt} obtained an upper bound of the size of a subset $A\subseteq \mathbb{F}_p^n$, $p\geq 3$, that is non-degenerate $3$-AP-free, of the form $\#A\leq (cp)^n$. Here $c<1$ is a universal constant; see \cite[Subsection~4.3]{BCCGNSU} for more details on $c$. A (possibly degenerate) $3$-AP $(x_1,x_2,x_3)$ in this order may be seen as a solution of the linear equation $x_1-2x_2+x_3=0$. In this paper, extending this framework to that of a (finite) \textit{system $\mathcal{T}$ of linear equations} (with each equation having zero-sum of the coefficients); more precisely, we study the maximum of the sizes of $A\subseteq \mathbb{F}_p^n$ that does not admit a `non-trivial' solution of $\mathcal{T}$. A motivating example is the system representing $4$-APs:
\begin{equation*}\label{4AP}
\left\{
\begin{aligned}
x_1-2x_2+x_3&=0,\\
x_2-2x_3+x_4&=0.
\end{aligned}
\right.
\tag{$\mathcal{S}_{4\mathrm{AP}}$}
\end{equation*}
However, as known to experts, at present the \textit{slice rank method} of Tao \cite{Tao} does not provide a non-trivial upper bound in this case. In this paper, we succeed in obtaining non-trivial bounds for the maximum of the sizes with respect to another system of equations, whose solution represents an interesting configuration from geometric aspects. 

\begin{theorem}[Avoiding a `$\mathrm{W}$ shape', outlined version]\label{theorem=Wshape}
Let $\mathcal{S}_{\mathrm{W}}$ be the following system of linear equations in $5$ variables:
\begin{equation*}\label{SW}
\left\{
\begin{aligned}
x_1-x_2-x_3+x_4&=0,\\
x_1-2x_3+x_5&=0.
\end{aligned}
\right.
\tag{$\mathcal{S}_{\mathrm{W}}$}
\end{equation*}
Then, there exist universal constants $0<c_1<c_2<1$ such that for every prime $p\geq 3$, the following holds: For $n\in \mathbb{N}$ sufficiently large depending on $p$, the maximum $\mathrm{ex}^{\sharp}_{\mathrm{W},p}(n)$ of subsets $A\subseteq \mathbb{F}_p^n$ that do not admit a solution $(x_1,x_2,x_3,x_4,x_5)$ of \eqref{SW} $\mathrm{consisting}$ $\mathrm{of}$ $\mathrm{five}$ $\mathrm{distinct}$ $\mathrm{points}$ satisfies that
\[
(c_1p)^n\leq \mathrm{ex}^{\sharp}_{\mathrm{W},p}(n) \leq (c_2p)^n.
\]
\end{theorem}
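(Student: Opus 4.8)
The two bounds call for different methods, so I would treat them separately.

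\emph{Lower bound.} I would obtain this by pinning down the right reduction of $\mathcal{S}_{\mathrm{W}}$. In any solution $(x_1,\dots,x_5)$ of $\mathcal{S}_{\mathrm{W}}$ consisting of five distinct points, the second equation $x_1-2x_3+x_5=0$ presents $x_1,x_3,x_5$ as a three-term arithmetic progression whose three terms are distinct, hence non-degenerate. Therefore every $A\subseteq\mathbb{F}_p^n$ with no non-degenerate $3$-AP is automatically $\mathrm{W}$-free, and $\mathrm{ex}^{\sharp}_{\mathrm{W},p}(n)\ge r_3(\mathbb{F}_p^n)$, where $r_3(\mathbb{F}_p^n)$ is the largest size of such a set. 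In the language of this paper, the one-equation subsystem $\{x_1-2x_3+x_5=0\}$ — equivalently, the reduction obtained by identifying $x_1$ with $x_2$, which then forces $x_3=x_4$ — is the \emph{dominant reduction} of $\mathcal{S}_{\mathrm{W}}$: the only other single-equation reduction is the parallelogram equation $\{x_1-x_2-x_3+x_4=0\}$, whose solution-free sets are Sidon sets of size $O(p^{n/2})$; the reductions identifying $x_1$ with $x_4$ or $x_2$ with $x_5$ produce $\mathcal{S}_{4\mathrm{AP}}$, which cannot be used here since a generic $\mathrm{W}$ contains no non-degenerate four-term progression (these occur inside a $\mathrm{W}$ only in the exceptional case $x_2-x_1=3(x_3-x_1)$); and every remaining identification collapses $\mathcal{S}_{\mathrm{W}}$ to the trivial system on two variables. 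It then remains to feed in a construction of exponentially large $3$-AP-free sets: taking a non-degenerate $3$-AP-free $B\subseteq\mathbb{F}_p^{k_0}$ of size $M\ge 2$ (a Behrend-type set, or Edel's construction when $p=3$) and using that the $m$-fold product $B^m\subseteq\mathbb{F}_p^{k_0m}$ is again $3$-AP-free — a non-degenerate $3$-AP projects to a non-degenerate one in any block where its common difference is nonzero — one gets $r_3(\mathbb{F}_p^n)\ge(c_1p)^n$ with $c_1=(M^{1/k_0}/p)(1-o(1))\in(0,1)$ for $n$ large, and $c_1<c_2$.

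\emph{Upper bound.} Here the plan is to run the slice rank method for the system $\mathcal{S}_{\mathrm{W}}$, upgraded in the manner of Sauermann so as to control $\mathrm{ex}^{\sharp}$ — configurations in five distinct points — and not merely the absence of solutions in more than one point. Let $A$ be $\mathrm{W}$-free. First I would enumerate the reductions of $\mathcal{S}_{\mathrm{W}}$ obtained by identifying one pair of variables and propagating the two equations: identifying any of $x_1=x_2$, $x_2=x_3$, $x_3=x_4$, $x_4=x_5$ collapses $\mathcal{S}_{\mathrm{W}}$ to the single $3$-AP equation on the surviving distinct variables; identifying $x_1=x_4$ or $x_2=x_5$ yields $\mathcal{S}_{4\mathrm{AP}}$; and identifying $x_1=x_3$, $x_1=x_5$, $x_2=x_4$, or $x_3=x_5$ forces $x_1=x_3=x_5$ and $x_2=x_4$, leaving the trivial system on two variables. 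The decisive point — the reason the method succeeds for $\mathcal{S}_{\mathrm{W}}$ though it famously fails for $\mathcal{S}_{4\mathrm{AP}}$ — is a degree count: for a system in $k$ variables with $q$ independent equations each of vanishing coefficient sum, the indicator polynomial of the solution set has degree $q(p-1)n$ in $kn$ variables split into $k$ groups, so some group carries degree at most $q(p-1)n/k$, and the Croot--Lev--Pach monomial count beats $p^n$ exactly when $q(p-1)n/k<(p-1)n/2$, i.e.\ when $k>2q$. Now $\mathcal{S}_{\mathrm{W}}$ satisfies $5>2\cdot 2$ (barely), while $\mathcal{S}_{4\mathrm{AP}}$ has $4=2\cdot 2$; of the reductions above, the $3$-AP equation satisfies $3>2\cdot1$ and the trivial system $2>0$, and only the $\mathcal{S}_{4\mathrm{AP}}$ reductions fail — but these enter only through the $2n$-dimensional solution subspace they carve inside the $3n$-dimensional solution set of $\mathcal{S}_{\mathrm{W}}$, a whole factor $p^n$ smaller, so they are harmless. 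Running the slice rank argument with Sauermann's treatment of the coincidences then expresses the diagonal tensor on $A^5$, of slice rank $\#A$, through pieces each of slice rank $\le(c_2p)^n$ (the value being governed by the $k=5$, $q=2$ count), so $\#A\le(c_2p)^n$ with a universal $c_2<1$.

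\emph{Main obstacle.} The reduction bookkeeping and the degree count are routine; the genuinely delicate part is the Sauermann-style step in the upper bound. One must organize the passage from ``no solution in five distinct points'' to a slice rank inequality so that the many reductions and their combinatorial multiplicities do not accumulate past the bound, so that the finitely many values excluded by the non-degeneracy of a $\mathrm{W}$ (such as $x_2-x_1\notin\{0,\pm(x_3-x_1),2(x_3-x_1)\}$) can be absorbed without inflating slice rank, and — the crux — so that the $\mathcal{S}_{4\mathrm{AP}}$ reductions, for which no non-trivial slice rank bound is available, are neutralized purely by their lower dimensionality rather than by a direct estimate. It is this last point that forces $\mathcal{S}_{\mathrm{W}}$ to have five variables rather than four: with four variables the $\mathcal{S}_{4\mathrm{AP}}$ ``reduction'' would be the system itself.
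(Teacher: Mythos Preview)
Your lower bound is correct and is essentially the paper's argument (Theorem~\ref{theorem=dominantsharp}, Remark~\ref{remark=lowerW}): the second equation forces $(x_1,x_3,x_5)$ to be a non-degenerate $3$-AP in any $\mathcal{S}_{\mathrm{W}}$-shape, so a $3$-AP-free set is weakly $\mathcal{S}_{\mathrm{W}}$-free, and the Behrend-type construction on $\{0,1,\ldots,\lfloor (p-1)/2\rfloor\}^n$ gives $(p/2)^n$ uniformly. (A small caution: your ``pick $B\subseteq\mathbb{F}_p^{k_0}$ of size $M\geq 2$'' makes $c_1$ depend on $p$; the Behrend sphere construction gives the universal $c_1=1/2$.)

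Your upper bound, however, has a real gap, and the obstacle is not the one you name. You describe Sauermann's method as a way to ``express the diagonal tensor on $A^5$'' through slice-rank-bounded pieces, with the $\mathcal{S}_{4\mathrm{AP}}$ degenerations neutralized by their lower dimension. That is not the structure of the argument, and I do not see how to carry out such a direct inclusion--exclusion: the degenerate contributions need not have small slice rank, and the two-point degeneration $(a,b,a,b,a)$ --- present for \emph{every} pair $a\neq b$ in $A$ --- is not lower-dimensional in any useful sense. Sauermann's method is a two-stage procedure: first locate a large disjoint family of degenerate semishapes of one fixed combinatorial type; then prove an \emph{extendability} lemma showing that distinct extendable pairs have distinct differences, hence number at most $p^n$; use this to prune the family to a \emph{multicolored} $\mathcal{S}_{\mathrm{W}}$-free set $M'$ with $\#M'\gtrsim t^2/p^n$ where $t\asymp\#A$; and only then apply the multicolored slice rank bound (Corollary~\ref{mcorollary=multicolor}) to $M'$.

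The genuine obstacle, spelled out in Remark~\ref{remark=difference}, is that for the two-point type $(a,b,a,b,a)$ the extendability lemma is \emph{false}: given two such semishapes one cannot recombine their coordinates into a semishape violating $x_1=x_3=x_5$, since any two of $x_1,x_3,x_5$ coinciding forces all three to coincide. The paper's fix is a case split on the number of disjoint non-degenerate $3$-APs in $A$. If there are few, remove them and apply Ellenberg--Gijswijt to the $3$-AP-free remainder --- this is where the two-point degeneration is absorbed. If there are many, seed the Sauermann argument with semishapes $(a,a,a',a',a'')$ built from those $3$-APs; then $X_1=X_2$, $X_3=X_4$, and $X_1,X_3,X_5$ are pairwise disjoint by construction, so Lemma~\ref{lemma=shape} forces every semishape in $\tilde X$ to satisfy $x_1=x_2$ and $x_3=x_4$, automatically excluding both the two-point and the $4$-AP degenerations from $\tilde X$, after which Lemma~\ref{lemma=extendability} goes through. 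So the $4$-AP reductions you flag as the crux are in fact harmless --- but for a structural reason (disjointness of the $X_i$), not a dimensional one, and not the reason the problem is hard. A minor side remark: your degree criterion $k>2q$ assumes every variable is simple; here $x_1,x_3$ have multiplicity $2$, and the correct condition is the paper's inequality~\eqref{bigstar}, which does hold for $(r_1,r_2,L)=(3,2,2)$ but by a different count.
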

For sufficiently large $p$, we have that $0.5\leq c_1 <c_2\leq 0.985$; see 
Theorem~\ref{mtheorem=Wshape} for a more precise form. Geometrically, the solution  $(x_1,x_2,x_3,x_4,x_5)$ of the system  \eqref{SW} is the combination shape of  a `parallelogram $(x_1,x_2,x_4,x_3)$' and a `$3$-AP $(x_1,x_3,x_5)$'; it may be regarded as a `\textit{$\mathrm{W}$ shape}'. See figure~\ref{fig5}. (In this paper, we will make a distinction between a system of linear equations with coefficients in $\mathbb{F}_p$ and that with coefficients in $\mathbb{Z}$; for this reason, strictly speaking, we should use $\mathcal{S}_{\mathrm{W}}(p)$ for the system in Theorem~\ref{theorem=Wshape} instead of $\mathcal{S}_{\mathrm{W}}$. We will do it from the next section. See Section~\ref{section=result} for more details.)
\begin{figure}[h]
\begin{center}
\includegraphics[width=5.0cm]{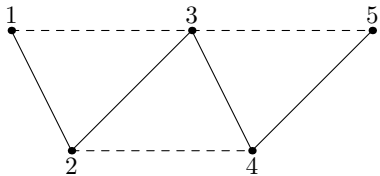}  
\caption{An $\mathcal{S}_{\mathrm{W}}(p)$-shape}\label{fig5}
\end{center}
\end{figure}

The trivial upper bound of $\#A$ above is $\#\mathbb{F}_p^n=p^n$. The estimate from above in Theorem~\ref{theorem=Wshape} shows that for a fixed prime $p\geq 3$, $\frac{\#A}{\#\mathbb{F}_p^n}$ tends to $0$ \textit{exponentially fast} as $n\to \infty$.

Note that there are two possible ways to formulate the notion of  being `\textit{non-trivial}' for a solution of a system $\mathcal{T}$; see Definition~\ref{definition=Erdos} for more details.
\begin{itemize}
 \item Regard a solution consisting of at least two points as a `non-trivial' one; we say that $A\subseteq \mathbb{F}_p^n$ is \textit{strongly $\mathcal{T}$-free} if $A$ does not admit a non-trivial solution in this sense.
 \item Regard a solution consisting of \textit{distinct} points as a `non-trivial' one; we say that $A\subseteq \mathbb{F}_p^n$ is \textit{weakly $\mathcal{T}$-free} if $A$ does not admit a non-trivial solution in that sense.
\end{itemize}
Tao's slice rank method, which reformulates the work of \cite{CrootLevPach} and \cite{EllenbergGijswijt}, applies to the problem of bounding the size of \textit{strongly} $\mathcal{T}$-free sets. However, it is trivial to bound the size of strongly $\mathcal{S}_{\mathrm{W}}$-free sets in $\mathbb{F}_p^n$. Indeed, if $A\subseteq \mathbb{F}_p^n$ admits distinct two points $a,b\in A$, then $(x_1,x_2,x_3,x_4,x_5)=(a,b,a,b,a)$ is a solution violating strong $\mathcal{S}_{\mathrm{W}}$-freeness; it gives that $\#A\leq 1$. On the other hand, if we switch from strong freeness to \textit{weak} freeness, then it becomes a non-trivial problem to study the maximum of the sizes of $\mathcal{S}_{\mathrm{W}}$-free subsets in $\mathbb{F}_p^n$, as Theorem~\ref{theorem=Wshape} indicates. This example exhibits significance of estimating the maximum of the sizes of \textit{weakly} $\mathcal{T}$-free subsets in $\mathbb{F}_p^n$ for a system $\mathcal{T}$; in this formulation, the slice rank method does  \textit{not} immediately apply in general. Note that for the system of \eqref{4AP}, strong freeness and weak freeness are equivalent; the difference described above between these two notions is invisible if we only discuss $k$-APs.

Quite recently, Sauermann \cite{Sauermann} has provided a considerably improved upper bound of the size of a subset $A\subseteq \mathbb{F}_p^n$ that is \textit{weakly} free with respect to the equation in $p$ variables $x_1+x_2+\cdots +x_p=0$ with coefficients in $\mathbb{F}_p$. See Section~\ref{section=Wshape} for more details. The proof of the upper bound in the assertions in Theorem~\ref{theorem=Wshape} will be established in the following two steps.
\begin{enumerate}
  \item[Step~$1$.] For a system $\mathcal{T}$ of linear equations, obtain an upper bound, in the \textit{multicolored version}, of the size of strongly $\mathcal{T}$-free set; see Corollary~\ref{mcorollary=multicolor} and Definition~\ref{definition=multicolored}.
  \item[Step~$2$.] Adapt the argument of Sauermann and deduce the desired upper bound of the size of a \textit{weakly} $\mathcal{T}$-free set from the estimate obtained in Step~$1$.
\end{enumerate}
There is a difficulty in applying the argument in \cite{Sauermann} to the case of our system $\mathcal{S}_{\mathrm{W}}$: When we try to modify one of the key lemmas \cite[Lemma~3.1]{Sauermann} to our case, Lemma~\ref{lemma=extendability}, it is \textit{impossible} to do so in one case. This is exactly the case where the degenerate solution of \eqref{SW} is of the form $(x_1,x_2,x_3,x_4,x_5)=(a,b,a,b,a)$ with $a\ne b$, which appeared above. It occurs because of lack of the full symmetry of the coefficients of the system of equations; it may be seen as a new phenomenon compared with the original case of \cite{Sauermann}. See also Remark~\ref{remark=difference} and Problem~\ref{problem=Sauermann}.

The estimate in Step~$1$ above may be done for a general system of linear equations (with each equation having zero-sum of the coefficients). The proof of the lower bound in the assertions of Theorem~\ref{theorem=Wshape} can be extended to a system that admits a sequence of `\textit{dominant reductions}' that terminates with the empty system of equations in one variable. The main results in the present paper consist of the following three parts. 
\begin{enumerate}[$(i)$]
  \item $[$Theorem~\ref{mtheorem=slicerank} and Corollary~\ref{mcorollary=multicolor}$]$ For a system $\mathcal{T}$ in $r$ variables of $L$ linear equations with coefficients in $\mathbb{F}_p$, we define parameters $(r_1,r_2,L)$, where $r_1+r_2=r$. We obtain a non-trivial upper bound of the size of a subset $A\subseteq \mathbb{F}_p^n$ that is strongly $\mathcal{T}$-free, provided that the parameters $(r_1,r_2,L)$ satisfy `inequality \eqref{bigstar}', which is stated in Section~\ref{section=result}. The proofs here are done by slice rank method.
    \item $[$Theorem~\ref{theorem=dominant}, Theorem~\ref{theorem=dominantsharp} and Theorem~\ref{mtheorem=dominant}$]$ We define a notion of being \textit{dominant} for a system $\mathcal{S}$ of linear equations with \textit{coefficient in $\mathbb{Z}$}. For a system $\mathcal{S}$ that admits a subsystem that is dominant and irreducible (see Definitions~\ref{definition=dominant} and \ref{definition=multiplicity}), we provide a lower bound of the maximum of the sizes of $A\subseteq \mathbb{F}_p^n$ that is strongly/weakly $\mathcal{S}(p)$-free. Here, $\mathcal{S}(p)$ denotes the system of linear equations with coefficients in $\mathbb{F}_p$ obtained by $\mathrm{mod}$ $p$ reduction of $\mathcal{S}$. The proof is inspired by arguments by (Behrend \cite{Behrend} and) Alon \cite[Lemma~17, Corollary~18]{FoxPham}; it employs strict convexity of Euclidean norms (namely, that of the unit sphere in Euclidean spaces). Further, we define a `\textit{dominant reduction}' (Definition~\ref{definition=dominantreduction}), which is an operation on systems of linear equations with coefficients in $\mathbb{Z}$, and generalize the argument above.
  \item $[$Theorem~\ref{mtheorem=Wshape}$]$ As an application of $(i)$ (upper bound) and $(ii)$ (lower bound), we establish Theorem~\ref{theorem=Wshape}. We adapt an argument of Sauermann \cite{Sauermann} to have an upper bound of the size of $A\subseteq \mathbb{F}_p^n$ that is \textit{weakly} `$\mathrm{W}$-shape'-free.
\end{enumerate}
See Section~\ref{section=result} for more details.

\section{Main results}\label{section=result}

Throughout the current paper, let $p$ be a prime number, $q$ be a prime power, and $\mathbb{F}_q$ denote the finite field of order $q$; we mainly use a prime field $\mathbb{F}_p$. Let $\mathbb{N}=\{1,2,\ldots \}$. We use the symbol $\#A$ for the size of a finite set $A$. For $k\in \mathbb{N}$, let $[k]:=\{1,2,\ldots ,k\}$. For a prime $p$, we call a linear equations with coefficients in $\mathbb{F}_p$ simply \textit{an $\mathbb{F}_p$-equation}; we call that with coefficients in $\mathbb{Z}$ simply \textit{a $\mathbb{Z}$-equation}. Unless otherwise stated, we use $a_i^{(l)}(\in \mathbb{F}_p)$ for coefficients of a system of $\mathbb{F}_p$-equations; we use the symbol $b_i^{(l)}(\in \mathbb{Z})$ for those of a system of $\mathbb{Z}$-equations. Similarly, we use $\mathcal{T}$ and $\mathcal{S}(p)$ for a system of $\mathbb{F}_p$-equations, $\mathcal{S}$ for that of $\mathbb{Z}$-equations. We only consider systems of finitely many equations. Let $e=2.71828\ldots$ denote the base of the natural logarithm.

The main subject of the present paper is the maximum of the sizes, viewed as a function on the dimension $n$, of a subset $A\subseteq \mathbb{F}_p^n$ that is strongly/weakly $\mathcal{T}$-free. Here $\mathcal{T}$ is a system of $L$ $\mathbb{F}_p$-equations in $r$ variables:
\begin{equation*}\label{T}
\left\{
\begin{aligned}
a_1^{(1)}x_1+a_2^{(1)}x_2+a_3^{(1)}x_3+\cdots +a_r^{(1)}x_r&=0,\\
a_1^{(2)}x_1+a_2^{(2)}x_2+a_3^{(2)}x_3+\cdots +a_r^{(2)}x_r&=0,\\
\vdots\qquad\qquad \qquad\\
a_1^{(L)}x_1+a_2^{(L)}x_2+a_3^{(L)}x_3+\cdots +a_r^{(L)}x_r&=0.
\end{aligned}
\right.
\tag{$\mathcal{T}$}
\end{equation*}
Here we say that an $\mathbb{F}_p$-equation $a_1x_1+a_2x_2+a_3x_3+\cdots +a_rx_r=0$ is \textit{balanced} if $\sum_{i=1}^ra_i=0$. We say that $\mathcal{T}$ is \textit{balanced} if every equation belonging to $\mathcal{T}$ is balanced. We define the notions of \textit{balanced} $\mathbb{Z}$-equations/systems of $\mathbb{Z}$-equations in the same manner as one above. In this paper, we assume  that systems of $\mathbb{F}_p$- or $\mathbb{Z}$-equations are balanced. We will explicitly write this assumption in the statements of theorems, but even in other parts, we always assume it.

\begin{definition}[Semishapes and shapes]\label{definition=shape}
Let $\mathcal{T}$ be a balanced system of $\mathbb{F}_p$-equations. Let $n\in \mathbb{N}$ and $A_1,A_2,\ldots ,A_r\subseteq \mathbb{F}_p^n$.
\begin{enumerate}[$(1)$]
  \item We say that $(x_1,x_2,\ldots ,x_r)\in A_1\times A_2\times \cdots \times A_r$ is a $\mathcal{T}$-\textit{semishape} in $A_1\times A_2\times \cdots \times A_r$ if it is a solution to $\mathcal{T}$.
  \item We say that a $\mathcal{T}$-semishape $(x_1,x_2,\ldots,x_r)$ is \textit{non-degenerate} if $x_1,\ldots,x_r$ are distinct. A non-degenerate $\mathcal{T}$-semishape is called a \textit{$\mathcal{T}$-shape}.
\end{enumerate}
If $A_1=A_2=\cdots =A_r=A$, then we call a $\mathcal{T}$-(semi)shape in $A_1\times \cdots \times A_r$ that in $A$. 
\end{definition}
For each $k\geq 3$, we use the terminology of \textit{non-degenerate $k$-APs} in the sense above.

Since the system $\mathcal{T}$ is balanced, for $(\emptyset \ne )A\subseteq \mathbb{F}_p^n$ and for $x\in A$, $(x,x,\ldots,x)$ is always a  $\mathcal{T}$-semishape in $A$; we call it a \textit{singleton semishape}.

\begin{definition}[Strong $\mathcal{T}$-freeness and weak $\mathcal{T}$-freeness]\label{definition=Erdos}
Let $\mathcal{T}$ be a balanced system of $\mathbb{F}_p$-equations. Let $n\in \mathbb{N}$.
\begin{enumerate}[$(1)$]
  \item We say that $A\subseteq \mathbb{F}_p^n$ is \textit{strongly $\mathcal{T}$-free} if $\mathcal{T}$-semishapes in $A$ are all singletons. We define $\mathrm{ex}_{\mathcal{T}}(n)\in \mathbb{N}$ by
\[
\mathrm{ex}_{\mathcal{T}}(n):=\max\{\#A:\ A\subseteq \mathbb{F}_p^n,\ \textrm{$A$ is strongly $\mathcal{T}$-free}\}.
\]
  \item We say that $A\subseteq \mathbb{F}_p^n$ is \textit{weakly $\mathcal{T}$-free} if $A$ admits no $\mathcal{T}$-shape. We define $\mathrm{ex}^{\sharp}_{\mathcal{T}}(n)\in \mathbb{N}$ by
\[
\mathrm{ex}^{\sharp}_{\mathcal{T}}(n):=\max\{\#A:\ A\subseteq \mathbb{F}_p^n,\ \textrm{$A$ is weakly $\mathcal{T}$-free} \}.
\]
\end{enumerate}
For a balanced system of $\mathbb{Z}$-equations
\begin{equation*}\label{S}
\left\{
\begin{aligned}
b_1^{(1)}x_1+b_2^{(1)}x_2+b_3^{(1)}x_3+\cdots +b_r^{(1)}x_r&=0,\\
b_1^{(2)}x_1+b_2^{(2)}x_2+b_3^{(2)}x_3+\cdots +b_r^{(2)}x_r&=0,\\
\vdots\qquad\qquad \qquad\\
b_1^{(L)}x_1+b_2^{(L)}x_2+b_3^{(L)}x_3+\cdots +b_r^{(L)}x_r&=0,
\end{aligned}
\right.
\tag{$\mathcal{S}$}
\end{equation*}
we construct, for each prime $p$, a system $\mathcal{S}(p)$ of \textit{$\mathbb{F}_p$-equations} by $\mathrm{mod}$ $p$ reduction of $\mathcal{S}$. We define $\mathrm{ex}_{\mathcal{S}}(n,p)$ and $\mathrm{ex}^{\sharp}_{\mathcal{S}}(n,p)$ respectively by
\[
\mathrm{ex}_{\mathcal{S}}(n,p):=\mathrm{ex}_{\mathcal{S}(p)}(n),\quad \mathrm{ex}^{\sharp}_{\mathcal{S}}(n,p):=\mathrm{ex}^{\sharp}_{\mathcal{S}(p)}(n).
\]
\end{definition}

As we mentioned in the Introduction, we distinguish $\mathbb{F}_p$-equations and $\mathbb{Z}$-equations. This is because the dominant property, defined in Definition~\ref{definition=dominant}, only makes sense for $\mathbb{Z}$-equations.  We use the terminology of solutions, not semishapes, to indicate solutions of a system of $\mathbb{Z}$-equations.

To explain the results mentioned in $(i)$ in the Introduction, we define the concept of a  \textit{multiplicity} of a variable in a system of equations.
 We also need to define \emph{irreducible} systems to state the results 
 mentioned in $(ii)$ in the Introduction. 
 To this end we introduce a hypergraph associated with the system;
 see also Example~\ref{example=irreducible}.
\begin{definition}[Hypergraph associated with $\mathcal{T}$, multiplicities, simple variables]\label{definition=multiplicity}
Let \eqref{T} be a balanced system of $L$ $\mathbb{F}_p$-equations in $r$ variables
\begin{equation*}
\left\{
\begin{aligned}
a_1^{(1)}x_1+a_2^{(1)}x_2+a_3^{(1)}x_3+\cdots +a_r^{(1)}x_r&=0,\\
a_1^{(2)}x_1+a_2^{(2)}x_2+a_3^{(2)}x_3+\cdots +a_r^{(2)}x_r&=0,\\
\vdots\qquad\qquad \qquad\\
a_1^{(L)}x_1+a_2^{(L)}x_2+a_3^{(L)}x_3+\cdots +a_r^{(L)}x_r&=0.
\end{aligned}
\right.
\end{equation*}
\begin{enumerate}[$(1)$]
 \item We define a \textit{hypergraph} (multi-hypergraph) $\mathcal{H}_{\mathcal{T}}=(J,H)=(J_{\mathcal{T}},H_{\mathcal{T}})$ associated with $\mathcal{T}$ in the following manner: For each $l\in [L]$, set an edge $e_l:=\{i\in [r]:a_i^{(l)}\ne 0\}$. Define the multiset of edges as $H=H_{\mathcal{T}}=\{e_l:l\in [L]\}$. Set the set of vertices $J=J_{\mathcal{T}}$ as $J:=\bigcup_{l\in [L]}e_l$.
  \item We say that the system $\mathcal{T}$ is \textit{irreducible} if the hypergraph $\mathcal{H}_{\mathcal{T}}=(J,H)$ satisfies $J=[r]$ and that it is connected. Here we say that $\mathcal{H}_{\mathcal{T}}$ is \textit{connected} if for every $\{i,j\}\in \binom{J}{2}$, there exists a sequence of vertices $i=v_1,v_2,\ldots ,v_k=j$ such that for each $k'\in [k-1]$, there exists $l\in [L]$ with $\{v_{k'},v_{k'+1}\}\subseteq e_l$.
  \item Define the \textit{multiplicity} $m_i$ of a variable $x_i$, $i\in [r]$, in $\mathcal{T}$ by
\[
m_i:=\#\{l\in [L]:i\in e_l\}\quad (=\#\{l\in [L]: a_i^{(l)}\ne 0\});
\]
in other words, $m_i$ is the degree of vertex
 $i\in J$ in the (multi-)hypergraph $\mathcal{H}_{\mathcal{T}}$.
We say that a variable $x_i$ is \textit{simple} if $m_i=1$; otherwise, $x_i$ is said to be \textit{non-simple}.
  \item Define the following two parameters for $\mathcal{T}$:
\[
r_1:=\#\{i\in [r]: m_i=1\},\quad 
r_2:=\#\{i\in [r]: m_i\geq 2\}.
\]
Namely, $r_1$ is the number of simple variables; $r_2$ is the number of variables with multuiplicity at least $2$.
  \item The \textit{maximal multiplicity} $m_{\mathcal{T}}$ of $\mathcal{T}$ is defined as  $m_{\mathcal{T}}:=\max\{m_i:i\in [r]\}$; of course, it is the maximum  degree of the (multi-)hypergraph $\mathcal{H}_{\mathcal{T}}$.
\end{enumerate}
\end{definition}

We usually assume that our balanced system $\mathcal{T}$ is in addition irreducible. In that case, we have that $r_2$ equals the number of non-simple variables, and that $r_1+r_2=r$.

\begin{remark}
For a system $\mathcal{S}$ of $\mathbb{Z}$-equations, we define the hypergraph $\mathcal{H}_{\mathcal{S}}$, irreducibility, the parameters $r_1$ and $r_2$, If a prime is strictly bigger than the maximum of the absolute values of the coefficients in $\mathcal{S}$, then these  give exactly the same pieces of information as those associated with the system $\mathcal{S}(p)$ constructed by $\mathrm{mod}$ $p$ reduction.
\end{remark}

\begin{example}\label{example=irreducible}
Consider the following system of $\mathbb{Z}$-equations in $9$ variables:
 \begin{equation*}\label{S1}
 \left\{
 \begin{aligned}
 x_1+x_2-x_3-x_4 &=0,\\
 x_5+x_6-2x_7&=0,\\
 x_5+x_7+x_8-3x_9&=0,\\
 x_4-2x_5+x_8&=0.
 \end{aligned}
 \right.
 \tag{$\mathcal{S}_1$}
 \end{equation*}
 Then the hypergraph $\mathcal{H}_{\mathcal{S}_1}=(J_{\mathcal{S}_1},H_{\mathcal{S}_1})$ is the 
 following: $J_{\mathcal{S}_1}=[9]$, and $H_{\mathcal{S}_1}$ is the (multi)set $\{e_1,e_2,e_3,e_4\}$,
 where $e_1=\{1,2,3,4\}$, $e_2=\{5,6,7\}$, $e_3=\{5,7,8,9\}$ and $e_4=\{4,5,8\}$.
 The simple variables of $\mathcal{S}$ are $x_1,x_2,x_3,x_6,x_9$, and the parameters for $\mathcal{S}_1$ are $(r_1,r_2,L,m_{\mathcal{S}_1})=(5,4,4,3)$. (Indeed, $5\in [9]$ is contained in $e_2$, $e_3$ and $e_4$.) Since $J_{\mathcal{S}_1}=[9]$ and 
 $\mathcal{H}_{\mathcal{S}_1}$ is connected, $\mathcal{S}_1$ is irreducible. However, if we construct 
 a subsystem consisting of the first, second and third equations,
 \begin{equation*}\label{S1'}
 \left\{
 \begin{aligned}
 x_1+x_2-x_3-x_4 &=0,\\
 x_5+x_6-2x_7&=0,\\
x_5+x_7+x_8-3x_9&=0,\\
 \end{aligned}
 \right.
 \tag{$\mathcal{S}_1'$}
 \end{equation*}
 then this $\mathcal{S}_1'$ is no longer irreducible. Indeed, despite that $J_{\mathcal{S}_1'}=[9]$, 
 the connected components of $\mathcal{H}_{\mathcal{S}_1'}$ consist of $\{1,2,3,4\}$ and 
 $\{5,6,7,8,9\}$.
 Hence this system $\mathcal{S}_1'$ may be seen as the union of two `independent' systems, one is in 
 variables $x_1,x_2,x_3,x_4$, and the other is in variables $x_5,x_6,x_7,x_8,x_9$.
 Similarly, if we construct another subsystem $\mathcal{S}_1''$ by picking up the first, second and 
 fourth equations, then $\mathcal{S}_1''$ is not irreducible because $J_{\mathcal{S}_1''}=[8]$; the 
 variable $x_9$ is missing in the subsystem $\mathcal{S}_1''$.

These examples indicate that a subsystem of irreducible system is \textit{not necessarily} irreducible. Hence, although we usually assume that an original system of equations is irreducible, we may sometimes encounter with subsystems that are not irreducible. More precisely, in Definition~\ref{definition=dominantreduction}, we utilize the connected components of $\mathcal{H}_{\mathcal{S}'}$ for  a subsystem $\mathcal{S}'$ of $\mathcal{S}$, which is \textit{not} irreducible in general.
\end{example}

\begin{mtheorem}[`Inequality $(\bigstar )$' and upper bounds of $\mathrm{ex}_{\mathcal{T}}(n)$]\label{mtheorem=slicerank}
 
Let $p$ be a prime. Let $\mathcal{T}$ be a balanced and irreducible system of $L$ $\mathbb{F}_p$-equations in $r$ variables. Let $r_1$ and $r_2$ be the parameters for $\mathcal{T}$ defined in Definition~$\ref{definition=multiplicity}$. Assume that the parameters $(r_1,r_2,L)$ satisfy the following \textit{`inequality'}
\[\label{bigstar}
\frac{1}{2}r_1+\frac{1}{e}r_2>L.\tag{$\bigstar$}
\]

Then, there exists $c_{\mathcal{T}}(p)<1$ such that for all $n\in \mathbb{N}$, it holds that
\[
\mathrm{ex}_{\mathcal{T}}(n)\leq (c_{\mathcal{T}}(p)\cdot p)^n.
\]
Furthermore, we have that
\[
c_{\mathcal{T}}(p) \leq \frac{\tilde{C}_{(r_1,r_2,L,m_{\mathcal{T}})}(p)}{p},
\]
where the constant $\tilde{C}_{(r_1,r_2,L,m_{\mathcal{T}})}(p)$ is defined in Definition~$\ref{definition=Lambda}$$(2)$ below.\end{mtheorem}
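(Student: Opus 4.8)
The plan is to reduce the statement to a single application of Tao's slice rank method, applied not to $\mathcal{T}$ directly but to a carefully chosen polynomial encoding that exploits inequality \eqref{bigstar}. First I would set up the counting: fix a strongly $\mathcal{T}$-free set $A\subseteq\mathbb{F}_p^n$ and consider the function on $A^r$ (more precisely, on the $r$ groups of variables, one copy of $A$ for each variable $x_i$) that is supported exactly on the singleton semishapes $(x,x,\dots,x)$, $x\in A$. The crucial point is that every $\mathcal{T}$-semishape in $A$ is a singleton, so this function agrees on the solution set of $\mathcal{T}$ with a function supported on the diagonal; hence its slice rank is at least $\#A$. The upper bound on the slice rank will come from expanding the indicator of the solution set of $\mathcal{T}$ via the Fourier/polynomial expansion over $\mathbb{F}_p$, exactly as in Croot--Lev--Pach and Ellenberg--Gijswijt.

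Next I would carry out the polynomial degree bookkeeping, which is where the parameters $(r_1,r_2,L)$ and the number $e$ enter. The indicator of "$(x_1,\dots,x_r)$ satisfies all $L$ equations" is a product over $l\in[L]$ of $\prod_{t\in\mathbb{F}_p^\times}\bigl(1-(a_1^{(l)}x_1+\dots+a_r^{(l)}x_r)^{p-1}\bigr)$-type factors, a polynomial of total degree at most $L(p-1)$ in the $r$ variable-blocks. When this is split into monomials and each monomial is grouped according to which block of variables carries the most degree, a term contributes to the slice rank in a given "slicing" only if the degree assigned to that block is below a threshold; counting the monomials below the threshold is the standard asymptotic $(c p)^n$ estimate. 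The role of a \emph{simple} variable (multiplicity one) is that it appears in only one equation, so it contributes at most degree $p-1$ and can be "halved" — this is the source of the coefficient $\tfrac12$ on $r_1$ — whereas a non-simple variable appears in at least two equations and the best one can extract is governed by the entropy-type constant $1/e$, giving the $\tfrac1e r_2$ term. Inequality \eqref{bigstar} is exactly the condition that guarantees the resulting exponent base is strictly below $p$, yielding $c_{\mathcal{T}}(p)<1$; the explicit constant $\tilde C_{(r_1,r_2,L,m_{\mathcal{T}})}(p)$ from Definition~\ref{definition=Lambda}$(2)$ is then just the bookkeeping made precise, and irreducibility is used to ensure $r_1+r_2=r$ so that every variable is accounted for.

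I expect the main obstacle to be the degree-allocation combinatorics: one must show that, after expanding the product of $L$ equation-indicators, every monomial can be assigned to one of the $r$ variable-blocks in such a way that the assigned block receives "few" variables' worth of degree, and that the count of admissible monomials is at most $(\tilde C_{(r_1,r_2,L,m_{\mathcal{T}})}(p))^n$. Handling simple versus non-simple variables asymmetrically — routing degree preferentially onto simple variables, since each such variable is cheap (it appears in a single equation and contributes degree $\le p-1$), while non-simple variables require the concentration-of-measure / Chernoff-type estimate that produces the $1/e$ — is the delicate part, and it is precisely where the hypothesis \eqref{bigstar} must be invoked. Once the per-block degree threshold is in hand, the passage from "slice rank $\ge \#A$" and "slice rank $\le$ (number of low-degree monomials)" to the stated bound $\mathrm{ex}_{\mathcal{T}}(n)\le (c_{\mathcal{T}}(p)p)^n$ is routine, as is checking $c_{\mathcal{T}}(p)<1$ from \eqref{bigstar}.
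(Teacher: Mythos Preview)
Your outline is essentially the paper's own approach: build the indicator polynomial $f=\prod_{l,k}\bigl(1-(a_1^{(l)}x_{1,k}+\cdots+a_r^{(l)}x_{r,k})^{p-1}\bigr)$, use strong $\mathcal{T}$-freeness to see that $f$ satisfies the diagonal condition so $\mathrm{sr}(f)=\#A$, expand, and allocate each monomial to a variable-block according to a weight vector $(\alpha_1,\dots,\alpha_r)$ with $\sum_i\alpha_i=L$. The counting of low-degree monomials is exactly Lemma~\ref{lemma=sum}, and the passage to $\tilde C_{(r_1,r_2,L,m_{\mathcal{T}})}(p)$ is obtained by taking $\alpha_i=\alpha$ for simple variables and $\alpha_i=\beta$ for non-simple ones. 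Two points are missing from your sketch, one minor and one substantive.

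The minor one: Proposition~\ref{proposition=slicerank} gives $\#A\le \sum_{i=1}^r(\Lambda_{m_i,\alpha_i,p-1})^n\le r\,(\overline C_{\mathcal{T}}(p))^n$, and you need the supermultiplicativity $\mathrm{ex}_{\mathcal{T}}(n+n')\ge \mathrm{ex}_{\mathcal{T}}(n)\,\mathrm{ex}_{\mathcal{T}}(n')$ (the ``power trick'') to drop the prefactor $r$ and get the clean $(c_{\mathcal{T}}(p)p)^n$ valid for \emph{all} $n$.

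The substantive gap is in your sentence ``Inequality \eqref{bigstar} is exactly the condition that guarantees the resulting exponent base is strictly below $p$.'' The mechanism is not a Chernoff-type estimate but the explicit Lemma~\ref{lemma=lessthanp}: \eqref{bigstar} allows you to pick $\alpha<\tfrac12$ and $\beta<\tfrac1e$ with $r_1\alpha+r_2\beta=L$, and then $(1)$ gives $\Lambda_{1,\alpha,p-1}<p$ for every $p$, while $(2)$ gives $\Lambda_{m_{\mathcal{T}},\beta,p-1}<p$ \emph{only for $p$ sufficiently large} (depending on $m_{\mathcal{T}}$ and $\beta$). For small primes this direct route fails, and the paper closes the gap with a second trick you have not mentioned: embed $\mathbb{F}_p\hookrightarrow\mathbb{F}_q$ with $q=p^N$ for $N$ large, run the slice-rank bound over $\mathbb{F}_q$ (replacing the exponent $p-1$ by $q-1$), and use $\mathbb{F}_q^n\cong\mathbb{F}_p^{Nn}$ together with the fact that the coefficients of $\mathcal{T}$ lie in $\mathbb{F}_p$ to transfer the bound back. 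Without this field-extension step your argument does not yield $c_{\mathcal{T}}(p)<1$ for all primes $p$, only for large ones.
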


See Proposition~\ref{proposition=slicerank} for more details of the upper bound of $\mathrm{ex}_{\mathcal{T}}(n)$. It may be possible to improve the coefficient $\frac{1}{e}$ in `inequality \eqref{bigstar}' if $m_{\mathcal{T}}$ is bounded from above. However, in the present paper, we do not perform  a further optimization for each $m_{\mathcal{T}}$. For an irreducible system of a \textit{single} equation in $r$ variables, we always have that $(r_1,r_2,L)=(r,0,1)$. Hence, we may see `inequality \eqref{bigstar}' only after we consider a \textit{system} of equations. 

The parameters associated with the system \eqref{4AP} in the Introduction, which characterizes a $4$-AP, are $(r_1,r_2,L)=(2,2,2)$; unfortunately, they do \textit{not} fulfill `inequality \eqref{bigstar}'. However, there exists other systems $\mathcal{S}$ of $\mathbb{Z}$-inequalities for which Theorem~\ref{mtheorem=slicerank} provides a non-trivial upper bound of $\mathrm{ex}_{\mathcal{S}}(n,p)$; some of them appear naturally from a geometric point of view, For instance, the system  \eqref{SW} representing a `$\mathrm{W}$ shape', 
\begin{equation*}
\left\{
\begin{aligned}
x_1-x_2-x_3+x_4&=0,\\
x_1-2x_3+x_5&=0
\end{aligned}
\right.
\end{equation*}
is such an example if we switch from $\mathrm{ex}_{\mathcal{S}_{\mathrm{W}}}(n,p)$ to $\mathrm{ex}^{\sharp}_{\mathcal{S}_{\mathrm{W}}}(n,p)$; compare with arguments in the Introduction.

The following is the definition of the constant $\tilde{C}_{(r_1,r_2,L,m)}(d)$ which appears in Theorem~\ref{mtheorem=slicerank}. See Lemma~\ref{lemma=sum} for properties of the constant $\Lambda_{m,\alpha,h}$, and see Proposition~\ref{proposition=slicerank} for the background of the constant $\tilde{C}_{(r_1,r_2,L,m)}(d)$.

\begin{definition}[The constants $\Lambda_{m,\alpha,h}$ and $\tilde{C}_{(r_1,r_2,L,m)}(d)$]\label{definition=Lambda}
\begin{enumerate}[$(1)$]
  \item Let $m \in \mathbb{N}$, $\alpha\in \mathbb{R}_{>0}$ and $h\in \mathbb{N}$. Define a function
\begin{align*}
G_{m,\alpha,h}\colon (0,1]\to \mathbb{R};\quad u\mapsto u^{-\alpha h}(1+u+u^2+u^3+\cdots +u^{mh}).
\end{align*}
Define $\Lambda_{m,\alpha,h}$ to be the minimum of $G_{m,\alpha,h}$ over $(0,1]$. If $\alpha=0$, then define $\Lambda_{m,0,h}:=1$.
  \item Let $r_1,r_2\in \mathbb{Z}_{\geq 0}$ with $(r_1,r_2)\ne (0,0)$ and let $L,m\in \mathbb{N}$. For $d\geq 2$, define the following constant $\tilde{C}_{(r_1,r_2,L,m)}(d)$ by
\[
\tilde{C}_{(r_1,r_2,L,m)}(d):=\inf\{\max\{\Lambda_{1,\alpha,d-1},\Lambda_{m,\beta,d-1}\}:\ r_1\alpha+r_2\beta=L,\ \alpha\geq 0,\ \beta\geq 0\}.
\]
\end{enumerate}
\end{definition}

\begin{remark}
In the literature on the slice rank method, $\Lambda_{1,\frac{1}{k},p-1}$ ($k\geq 3$) in our paper is sometimes expressed as $\Gamma_{p,k}$. In the study of a \textit{system} of equations, it seems that $\Lambda_{m,\alpha,h}$ is more suited; this is the reason why we switch the symbol `$\Gamma$' to a new symbol `$\Lambda$'.
\end{remark}

Next, we will state the result in point $(ii)$ as in the Introduction. The main result here, Theorem~\ref{mtheorem=dominant}, is somewhat technical (which involves `\textit{dominant reductions}' as in Definition~\ref{definition=dominantreduction}); hence we only exhibit the most basic case of it in this section. The notion of being `dominant' for a $\mathbb{Z}$-equation is defined as follows.

\begin{definition}[Dominant $\mathbb{Z}$-equations]\label{definition=dominant}
\begin{enumerate}[$(1)$]
\item A balanced $\mathbb{Z}$-equation $b_1x_1+b_2x_2+b_3x_3+\cdots +b_rx_r=0$ is said to be \textit{dominant} if either of the following is satisfied:
\begin{itemize}
  \item There exists $j\in [r]$ such that $b_j>0$ and that for all $i\in [r]\setminus \{j\}$, $b_i\leq 0$ holds,
  \item there exists $j\in [r]$ such that $b_j<0$ and that for all $i\in [r]\setminus \{j\}$, $b_i\geq 0$ holds.
\end{itemize}
We call the index $j$ above a \textit{dominant index}, and $|b_j|(\in \mathbb{N})$ the \textit{dominant coefficient} of the equation.
\item We say a balanced system $\mathcal{S}$ of $\mathbb{Z}$-equations is \textit{dominant} if every equation belonging to $\mathcal{S}$ is dominant. For a balanced and dominant system $\mathcal{S}$, we define the \textit{dominant coefficient} of $\mathcal{S}$ as the maximum of the dominant coefficients of the equations in $\mathcal{S}$.
\end{enumerate}
\end{definition}

A dominant index of a dominant equation is uniquely determined unless it is of the form $bx_i-bx_j=0$, where $b\in \mathbb{N}$ and  $\{i,j\}\in \binom{[r]}{2}$. An equation of the form above may be seen as an `uninteresting' dominant equation. However, it might appear in the process of dominant reductions; compare with Lemma~\ref{lemma=dominantreduction}. The dominant coefficient of a dominant equation is uniquely determined. A \textit{subsystem} $\mathcal{S}'$ of a system $\mathcal{S}$ of equations in $r$ variables is a nonempty subcollection of equations. We say that a subsystem $\mathcal{S}'$ is \textit{irreducible} if $\mathcal{S}'$, viewed as a system of equations in the original $r$ variables, is irreducible; recall also Example~\ref{example=irreducible}. Note that irreduciblity of a  system of $\mathbb{Z}$-equations \textit{does not necessarily} pass to a subsystem, while the dominant property does.
\begin{theorem}[Lower bound of $\mathrm{ex}_{\mathcal{S}}(n,p)$ for $\mathcal{S}$ admitting a dominant and irreducible subsystem]\label{theorem=dominant}
Let $\mathcal{S}$ be a balanced system of $\mathbb{Z}$-equations. Assume that there exists a subsystem $\mathcal{S}'$ of $\mathcal{S}$ that is dominant and irreducible. Fix such an $\mathcal{S}'$, and let $b_{\mathcal{S}'}$ be the dominant coefficient of  $\mathcal{S}'$.

Then we have the following for every prime $p$ with $p> b_{\mathcal{S}'}$:``For every $\epsilon>0$, there exists $n_0=n_0(p,\epsilon)$ such that for all $n\geq n_0(p,\epsilon)$,
\[
\mathrm{ex}_{\mathcal{S}}(n,p)\geq \left((1-\epsilon)\left\lfloor \frac{p+b_{\mathcal{S}'}-1}{b_{\mathcal{S}'}}\right\rfloor\right)^n
\]
holds true.'' In particular, if moreover $b_{\mathcal{S}'}\geq 2$, then for sufficiently large $n$ depending on $p$, we have that
\[
\mathrm{ex}_{\mathcal{S}}(n,p)\geq \left(\frac{p}{b_{\mathcal{S}'}}\right)^n.
\]
\end{theorem}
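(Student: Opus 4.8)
The plan is to construct, for $n$ large, a large strongly $\mathcal{S}(p)$-free subset of $\mathbb{F}_p^n$ by lifting a configuration-avoiding subset of an integer box, following the Behrend--Alon sphere trick. Let $b=b_{\mathcal{S}'}$ and set $t:=\lfloor (p+b-1)/b\rfloor=\lceil p/b\rceil$, so that the ``digits'' $\{0,1,\dots,t-1\}$ have the property that any $\mathbb{Z}$-linear relation $\sum_i b_i x_i=0$ coming from a dominant equation of $\mathcal{S}'$, evaluated on such digits, stays within a range where mod $p$ reduction is injective. Concretely, fix a dominant equation of $\mathcal{S}'$ with dominant index $j$ and dominant coefficient $b_j$ with $|b_j|\le b$: the balanced condition gives $\sum_{i\ne j} b_i = -b_j$, all $b_i$ ($i\ne j$) of the sign opposite to $b_j$, so for digits in $\{0,\dots,t-1\}$ the quantity $b_j x_j$ and $-\sum_{i\ne j} b_i x_i$ both lie in an interval of length $< b\cdot(t-1)+1 \le p$ roughly; since $p>b$ one checks the equation over $\mathbb{Z}/p\mathbb{Z}$ forces the equation over $\mathbb{Z}$ on these representatives. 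I would state this as a short lemma: \emph{if all coordinates of a solution of $\mathcal{S}(p)$ have entries in $\{0,1,\dots,t-1\}$, then it is already a solution of $\mathcal{S}'$ over $\mathbb{Z}$.}

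Next I would encode points of the box $\{0,\dots,t-1\}^n$ via their squared Euclidean length: partition $\{0,\dots,t-1\}^n$ into level sets $B_s:=\{v : \|v\|^2 = s\}$ for $s=0,1,\dots,n(t-1)^2$; by pigeonhole some $B_s$ has size at least $t^n/(n(t-1)^2+1)$. Take $A$ to be (the image in $\mathbb{F}_p^n$ of) this $B_s$. The key point is that $A$ admits no non-singleton $\mathcal{S}(p)$-semishape: any such semishape has all entries in the digit range, hence by the lemma is an honest $\mathbb{Z}$-solution of $\mathcal{S}'$; restricting to the irreducible dominant equation with dominant index $j$, we get $b_j x_j = \sum_{i\ne j}(-b_i) x_i$ with all $-b_i\ge 0$ summing to $b_j$, i.e.\ $x_j$ is a convex combination (in $\mathbb{R}^n$) of the other $x_i$'s with positive weights. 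Strict convexity of the Euclidean sphere $\{\|\cdot\|^2=s\}$ then forces all the $x_i$ appearing with nonzero coefficient to be equal; connectedness/irreducibility of $\mathcal{S}'$ (its hypergraph $\mathcal{H}_{\mathcal{S}'}$ is connected on all of $[r]$) propagates this equality across all equations and all variables, so the semishape is a singleton. This is essentially Alon's argument (\cite[Lemma~17, Corollary~18]{FoxPham}), and I would cite it in that form.

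Finally, for the quantitative conclusion: $\mathrm{ex}_{\mathcal{S}}(n,p)=\mathrm{ex}_{\mathcal{S}(p)}(n)\ge \#A \ge t^n/(n(t-1)^2+1)$. Since the polynomial factor $n(t-1)^2+1$ is subexponential in $n$ while $t\ge 2$ is fixed once $p$ is fixed, for any $\epsilon>0$ we have $t^n/(n(t-1)^2+1) \ge ((1-\epsilon)t)^n$ for all $n\ge n_0(p,\epsilon)$, giving the first displayed bound. For the ``in particular'' clause, when $b\ge 2$ we have $t=\lceil p/b\rceil \ge p/b$, and choosing $\epsilon$ small (depending on $p,b$) so that $(1-\epsilon)t\ge p/b$ — which is possible whenever $t>p/b$, and when $t=p/b$ exactly one instead notes $\lceil p/b\rceil \ge (p/b)\cdot$ something; more cleanly, since $t\ge 2$ and $t\ge p/b$, for $n$ large $((1-\epsilon)t)^n\ge (p/b)^n$ — yields $\mathrm{ex}_{\mathcal{S}}(n,p)\ge (p/b)^n$.

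The main obstacle I expect is \textbf{not} the sphere argument (which is standard) but the careful bookkeeping in the lifting lemma: one must verify that for \emph{every} dominant equation in the irreducible subsystem $\mathcal{S}'$ simultaneously, the digit range $\{0,\dots,t-1\}$ is small enough that no wraparound mod $p$ occurs, and that $t$ is genuinely the largest such value so that the bound $\lfloor(p+b-1)/b\rfloor$ is attained rather than something smaller. The balanced hypothesis is what makes this work — it pins the dominant coefficient against the sum of the others — and I would make sure the inequality $b\cdot(t-1) < p$ (equivalently $t-1 < p/b$, i.e.\ $t \le \lceil p/b\rceil$) is checked with the right rounding. The hypothesis $p>b_{\mathcal{S}'}$ guarantees $t\ge 2$, so the construction is non-degenerate; everything downstream is then routine.
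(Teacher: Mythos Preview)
Your approach is essentially identical to the paper's: reduce to $\mathcal{S}'$, use the Behrend--Alon sphere construction on $\{0,\dots,t-1\}^n$ with $t=\lceil p/b\rceil$, lift $\mathcal{S}(p)$-semishapes to $\mathbb{Z}$-solutions via the no-wraparound lemma (the paper checks $kb\le p-1$ where $k=t-1=\lfloor(p-1)/b\rfloor$), and conclude by strict convexity of the Euclidean sphere plus connectedness of $\mathcal{H}_{\mathcal{S}'}$.

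The one place your write-up needs tightening is the ``in particular'' clause. Your fallback ``since $t\ge 2$ and $t\ge p/b$, for $n$ large $((1-\epsilon)t)^n\ge (p/b)^n$'' is false when $t=p/b$ exactly, since then $(1-\epsilon)t<p/b$ for every $\epsilon>0$. The paper's fix is the observation you gestured at but did not nail down: because $p$ is \emph{prime} and $2\le b<p$, we have $b\nmid p$, hence $p/b\notin\mathbb{Z}$ and so $t=\lceil p/b\rceil>p/b$ strictly. Then one may choose $\epsilon=\epsilon(p)>0$ with $(1-\epsilon)t>p/b$, and the first assertion gives the bound for $n\ge n_0(p,\epsilon(p))$. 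Add that sentence and the proof is complete.
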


By introducing the operation `\textit{dominant reduction}' on balanced systems of $\mathbb{Z}$-equations, we obtain a refinement of Theorem~\ref{theorem=dominant}; see Definition~\ref{definition=dominantreduction} and Theorem~\ref{mtheorem=dominant} for more details. This operation of dominant reductions appears only after we broaden the framework from a single equation to a system of equations; compare with Problem~\ref{problem=dominant}.

Finally, we present the precise form of the upper bound part of the assertions in Theorem~\ref{theorem=Wshape}, which is the main part of $(iii)$ as in the Introduction.

\begin{mtheorem}[Avoiding a `$\mathrm{W}$ shape']\label{mtheorem=Wshape}
Let $p\geq 3$ be a prime. Let $\mathcal{T}_{\mathrm{W},p}$ be the following system of $\mathbb{F}_p$-equations in $5$ variables:
\begin{equation*}\label{TWp}
\left\{
\begin{aligned}
x_1-x_2-x_3+x_4&=0,\\
x_1-2x_3+x_5&=0.
\end{aligned}
\right.
\tag{$\mathcal{T}_{\mathrm{W},p}$}
\end{equation*}
In other words, $\mathcal{T}_{\mathrm{W},p}=\mathcal{S}_{\mathrm{W}}(p)$. Then, for every $p\geq 3$, there exists $C_{\mathrm{W}}(p)<p$ such that for every  $n\in \mathbb{N}$, it holds that
\[
\mathrm{ex}^{\sharp}_{\mathcal{T}_{\mathrm{W},p}}(n)\leq  7\left(\sqrt{C_{\mathrm{W}}(p)\cdot p}\right)^n.
\]
Furthermore, we may have that $C_{\mathrm{W}}(p)=\tilde{C}_{(3,2,2,2)}(p)$, namely,
\[
C_{\mathrm{W}}(p)=\inf\{\max\{\Lambda_{1,\alpha,p-1},\Lambda_{2,\beta,p-1}\}:\ 3\alpha+2\beta=2,\ \alpha\geq 0,\ \beta\geq 0\}.
\]
\end{mtheorem}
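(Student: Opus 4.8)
\textbf{Proof proposal for Theorem~\ref{mtheorem=Wshape}.}
The plan is to split the statement into its two halves: first obtain the multicolored upper bound for \emph{strongly} $\mathcal{T}_{\mathrm{W},p}$-free sets via the slice rank method, and then run a Sauermann-type argument to descend to the \emph{weak} freeness that we actually want. For the first half I would check that the parameters of $\mathcal{T}_{\mathrm{W},p}$ are $(r_1,r_2,L,m_{\mathcal{T}})=(3,2,2,2)$: the variables $x_2,x_4,x_5$ each occur in exactly one equation (simple), while $x_1,x_3$ occur in both (multiplicity $2$), and the system is clearly balanced and irreducible (the hypergraph has edges $\{1,2,3,4\}$ and $\{1,3,5\}$, which share the vertices $1,3$). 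Now $(3,2,2)$ satisfies inequality \eqref{bigstar} since $\tfrac12\cdot 3+\tfrac1e\cdot 2=1.5+0.735\ldots>2$, so Theorem~\ref{mtheorem=slicerank} (together with its multicolored refinement, Corollary~\ref{mcorollary=multicolor}) applies and produces, for a triple of sets $A_1,\dots,A_5\subseteq\mathbb{F}_p^n$ with the number of $\mathcal{T}_{\mathrm{W},p}$-semishapes being too small, the bound with constant $\tilde{C}_{(3,2,2,2)}(p)$; in particular $C_{\mathrm{W}}(p):=\tilde{C}_{(3,2,2,2)}(p)<p$ by Lemma~\ref{lemma=sum}.

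For the second half I would adapt Sauermann's scheme. One takes a weakly $\mathcal{T}_{\mathrm{W},p}$-free set $A\subseteq\mathbb{F}_p^n$ of size $N$ and wants to conclude $N\le 7(\sqrt{C_{\mathrm{W}}(p)p})^n$. The idea is the standard ``$A$ has many non-degenerate solutions unless it is small'' dichotomy, packaged through an auxiliary partition: one colours/splits $A$ so that a genuine (five-distinct-points) $\mathcal{T}_{\mathrm{W},p}$-shape is forced once the relevant multicolored configuration count is large, and then the slice rank estimate from Step~1 caps that count. The key technical input is the analogue of \cite[Lemma~3.1]{Sauermann}, our Lemma~\ref{lemma=extendability}, which controls in how many ways a partial (degenerate) solution can be extended/perturbed to a genuine shape; feeding this into the counting turns the strong-freeness slice rank bound into the weak-freeness bound, at the cost of the square root (because effectively one equation's worth of freedom is spent separating the coincident coordinates) and of the harmless constant factor $7$ coming from the finitely many degenerate patterns that must be excised.

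The main obstacle, flagged already in the Introduction and in Remark~\ref{remark=difference}, is the one degenerate pattern for which the extendability lemma genuinely fails: the solution $(x_1,x_2,x_3,x_4,x_5)=(a,b,a,b,a)$ with $a\ne b$. Here the parallelogram $(x_1,x_2,x_4,x_3)=(a,b,b,a)$ is itself degenerate (it collapses to a segment) while the $3$-AP $(x_1,x_3,x_5)=(a,a,a)$ is a singleton, so the usual perturbation argument that would promote this to a genuine $\mathrm{W}$-shape has no room to move --- this is exactly the failure of full coefficient symmetry in $\mathcal{S}_{\mathrm{W}}$ as opposed to Sauermann's symmetric equation. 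The fix I would pursue is to handle this pattern separately: show that the set of pairs $(a,b)\in A\times A$ realizing it is itself constrained (it is governed by a simpler, single-equation type condition), bound its contribution directly, and absorb it into the constant $7$. Concretely I would (i) isolate the ``bad'' pairs, (ii) apply the slice rank / extendability machinery only to the remaining patterns where Lemma~\ref{lemma=extendability} is available, and (iii) verify that the arithmetic of the exponents still closes up to give $\sqrt{C_{\mathrm{W}}(p)p}$ in the base and a final multiplicative constant no larger than $7$. The remaining steps --- choosing the auxiliary partition, bookkeeping the colour classes, and checking that $C_{\mathrm{W}}(p)<p$ so that the bound is genuinely subexponential relative to $p^n$ --- are routine given Theorem~\ref{mtheorem=slicerank} and Lemma~\ref{lemma=sum}.
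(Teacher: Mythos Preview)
Your Step~1 is fine (modulo citing Lemma~\ref{lemma=sum} where you mean Lemma~\ref{lemma=lessthanp} / Remark~\ref{remark=lessthanpW} for $C_{\mathrm{W}}(p)<p$). The genuine gap is in Step~2, specifically in your proposed fix for the degenerate pattern $(a,b,a,b,a)$.

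Your plan is to ``isolate the bad pairs $(a,b)$'' and ``show that the set of pairs realizing it is itself constrained (governed by a simpler, single-equation type condition)''. But there is \emph{no} constraint: \emph{every} ordered pair $(a,b)\in A\times A$ with $a\neq b$ yields the semishape $(a,b,a,b,a)$, so the bad set has size $\#A(\#A-1)$ and cannot be bounded directly. Excising these pairs leaves nothing to work with, and absorbing them into a constant is impossible. This is exactly why a straightforward adaptation of Sauermann's argument collapses here, and your proposal does not get past this.

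The paper's actual mechanism is different and is the missing idea: one does \emph{not} try to handle the $(a,b,a,b,a)$ pattern at all. Instead one runs a dichotomy on \emph{non-degenerate $3$-APs} in $A$, with threshold $t=\lceil \tfrac{2}{7}\#A\rceil$. In Case~1 (fewer than $t$ disjoint $3$-APs) one deletes a maximal disjoint family of $3$-APs, at most $3(t-1)<\tfrac{6}{7}\#A$ points, leaving a set $A'$ of size at least $\tfrac{1}{7}\#A$ that is $3$-AP--free; then the Ellenberg--Gijswijt bound gives $\#A'\le(\Lambda_{1,1/3,p-1})^n$, which is $\le C_{\mathrm{W}}(p)^n$. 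In Case~2 one takes $t$ disjoint $3$-APs $(a,a',a'')$ and builds the collection $M$ of semishapes $(a,a,a',a',a'')$; the point is that this forces $X_1=X_2$, $X_3=X_4$, and $X_1,X_3,X_5$ pairwise disjoint, so every semishape in $X_1\times\cdots\times X_5$ automatically has $x_1=x_2$ and $x_3=x_4$ (Lemma~\ref{lemma=shape}). The $(a,b,a,b,a)$ pattern is thus \emph{never encountered} inside $\tilde X$, and Lemma~\ref{lemma=extendability} goes through for $(1,3)$-extendability, yielding $\#B\le p^n$ and then $\#M'\ge t^2/(4p^n)$ with $M'$ $5$-colored free. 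Corollary~\ref{mcorollary=multicolor} closes the argument. The constant $7$ and the square root both arise from this specific bookkeeping, not from ``excising degenerate patterns'' or ``spending one equation's worth of freedom''.
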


The constant $C_{\mathrm{W}}(p)$ above satisfies for sufficiently large $p$,
\[
C_{\mathrm{W}}(p)<0.97p \quad (<(0.985)^2p);
\]
see Remark~\ref{remark=lessthanpW}. The conclusion of Theorem~\ref{mtheorem=Wshape} is rephrased in terms of the system $\mathcal{S}_{\mathrm{W}}$ of $\mathbb{Z}$-equations as 
\[
\mathrm{ex}^{\sharp}_{\mathcal{S}_{\mathrm{W}}}(n,p)\leq  7\left(\sqrt{C_{\mathrm{W}}(p)\cdot p}\right)^n. 
\]
On the lower bound of $\mathrm{ex}^{\sharp}_{\mathcal{S}_{\mathrm{W}}}(n,p)$, see Remark~\ref{remark=lowerW}.

\ 

\noindent
\textbf{Organization of the present paper:} Section~\ref{section=slicerankpreliminary} is for preliminaries on the slice rank method. In Section~\ref{section=slicerank}, we prove Theorem~\ref{mtheorem=slicerank} by showing a precise version of the expression of the upper bound, which appears in Proposition~\ref{proposition=slicerank}. We verify the multicolored version, Corollary~\ref{mcorollary=multicolor}, of Theorem~\ref{mtheorem=slicerank}; it will be employed in the proof of Theorem~\ref{mtheorem=Wshape}. In Section~\ref{section=dominant}, we first prove Theorem~\ref{theorem=dominant}. Then we proceed to the formulation of dominant reductions, and generalize Theorem~\ref{theorem=dominant} to Theorem~\ref{mtheorem=dominant}. Section~\ref{section=Wshape} is devoted to the proofs of Theorem~\ref{mtheorem=Wshape} and Theorem~\ref{theorem=Wshape}. In Section~\ref{section=problems}, we propose two further problems.

\section{Preliminaries on the slice rank method}\label{section=slicerankpreliminary}

Here we briefly recall basics on the slice rank method; see \cite{Tao} and \cite{BCCGNSU} for more details. In his blog post, Tao \cite{Tao} introduced the notion of \textit{slice ranks} as a generalization of that of tensor ranks. In this section, let $F$ be a field and $X$ be a non-empty finite set. Let $r\in \mathbb{N}$.

\begin{definition}[Slice rank, \cite{Tao}]\label{definition=slicerank}
\begin{enumerate}[$(1)$]
 \item A function $g\colon X^r\to F$ is called a \textit{slice function} if there exists an index $i\in [r]$ such that the function $g$ is factorized as $g(x_1,\ldots ,x_r)$$=g_1(x_i)g_2(x_1,x_2,\ldots ,x_{i-1},x_{i+1},\ldots ,x_r)$, as the product of a one-variable function and a function in the rest $(r-1)$ variables.
 \item The \textit{slice rank} $\mathrm{sr}(f)$ of a function $f\colon X^r\to F$ is the minimum of $R$ over all expressions $f=\sum_{k=1}^Rg_k$ of $f$ as the sum of slice functions $g_1,g_2,\ldots ,g_R$.
\end{enumerate}
\end{definition}
The tensor rank of $f$ above is the minimum of $R$ over all expressions of $f$ as the sum of functions $g_1,\ldots ,g_R$ of the from $g_i=g_{i,1}(x_1)g_{i,2}(x_2)\cdots g_{i,r}(x_r)$. It is not too difficult to verify that $\mathrm{sr}(f)\leq \#X$ in general; see \cite{Tao} and \cite[Subsection~4.1]{BCCGNSU}.  Note that the slice rank is subadditive, that means for $f_1,f_2\colon X^r\to F$, it holds that $\mathrm{sr}(f_1+f_2)\leq \mathrm{sr}(f_1)+\mathrm{sr}(f_2)$.

The following proposition explains utility of the slice rank; this is a special case of \cite[Lemma~$1$]{Tao}.

\begin{proposition}[\cite{Tao}]\label{proposition=Tao}
Assume that $f\colon X^r\to F$ satisfies the condition
\[\label{ast}
f(x_1,x_2,\ldots ,x_r)\ne 0 \quad \Longleftrightarrow \quad x_1=x_2=\cdots =x_r. \tag{$\ast$}
\]
Then we have that $\mathrm{sr}(f)=\#X$.
\end{proposition}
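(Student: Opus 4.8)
The plan is to prove the two inequalities $\mathrm{sr}(f)\le\#X$ and $\mathrm{sr}(f)\ge\#X$ separately. The first is the easy half and is already essentially recorded before the statement: $\mathrm{sr}\le\#X$ holds for \emph{every} function $X^r\to F$. Concretely, since \eqref{ast} forces $f$ to be supported on the diagonal, one can write $f=\sum_{a\in X}f(a,\dots,a)\,\mathbb{1}[x_1=a]\cdot\prod_{i\ge2}\mathbb{1}[x_i=a]$, a sum of $\#X$ slice functions each factoring off $x_1$. So the substance is the lower bound $\mathrm{sr}(f)\ge\#X$, which I would establish by induction on $r$ (the assertion being meaningful for $r\ge2$).

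For the base case $r=2$ I would identify a slice function $X^2\to F$ with a rank-$\le1$ matrix indexed by $X\times X$; then any writing of $f$ as a sum of $R$ slice functions presents $f$ as a matrix of rank $\le R$, while \eqref{ast} says $f$ is a diagonal matrix with all diagonal entries nonzero, of rank exactly $\#X$. This forces $R\ge\#X$.

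For the inductive step ($r\ge3$), given $f=\sum_{k=1}^R g_k$ I would sort the slice functions by the variable they factor off, say $g_k=h_k(x_1)v_k(x_2,\dots,x_r)$ for $k$ in a set $I$ of size $s$ (the slices through the first coordinate), the rest factoring off some $x_{i_k}$ with $i_k\ge2$. The idea is to contract the first coordinate against a test function $c\colon X\to F$: replacing $f$ by $\hat f(x_2,\dots,x_r):=\sum_{a\in X}c(a)f(a,x_2,\dots,x_r)$ kills every $g_k$ with $k\in I$ as soon as $c$ annihilates the span of $\{h_k:k\in I\}$, leaving $\hat f$ as a sum of at most $R-s$ slice functions in the remaining $r-1$ variables (a term with $i_k\ge2$ stays a slice function of $(x_2,\dots,x_r)$ factoring off $x_{i_k}$, which is immediate from the definition). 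By \eqref{ast}, $\hat f$ is again a diagonal function, with value $c(b)f(b,\dots,b)$ at $(b,\dots,b)$, so it satisfies \eqref{ast} on $\operatorname{supp}(c)$. Applying the induction hypothesis to the restriction of $\hat f$ to $(\operatorname{supp} c)^{r-1}$, together with the fact that slice rank does not increase under restriction, would give $\#\operatorname{supp}(c)\le R-s$, hence $R\ge s+\#\operatorname{supp}(c)$.

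The main obstacle — and really the only subtle point — is that one cannot in general choose a nowhere-vanishing $c$ annihilating the $h_k$'s: the annihilator $W=\{c\in F^X:\sum_a c(a)h_k(a)=0\text{ for all }k\in I\}$ has dimension only $\ge\#X-s$ and may consist entirely of functions vanishing at some fixed point. I would resolve this by not insisting on full support: any subspace of $F^X$ of dimension $d$ contains a vector of Hamming weight $\ge d$ (sum the rows of a reduced row-echelon basis), so I can pick $c\in W$ with $\#\operatorname{supp}(c)\ge\dim W\ge\#X-s$. Then the inequality above reads $\#X-s\le R-s$, i.e.\ $R\ge\#X$, closing the induction (and if $s\ge\#X$ there is nothing to prove). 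This shows $\mathrm{sr}(f)\ge\#X$, which combined with the first paragraph gives $\mathrm{sr}(f)=\#X$.
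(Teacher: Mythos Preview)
Your argument is correct and is precisely Tao's original proof (the antisymmetrisation/contraction argument via a test function of large support), which the paper simply cites without reproducing; so there is nothing to compare beyond noting that you have supplied the details the paper omits.
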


\begin{proof}
See \cite[Lemma~1]{Tao} or \cite[Lemma~4.7]{BCCGNSU}.
\end{proof}

Recall from Definition~\ref{definition=Lambda} the definition of $\Lambda_{m,\alpha,h}$.

\begin{lemma}\label{lemma=sum}
Let $n\in \mathbb{N}$. Let $m\in \mathbb{N}$, $\alpha\in \mathbb{R}_{\geq 0}$ and $h\in \mathbb{N}$. Define the set of indices by
\[
\Theta_{m,\alpha,h}(n):=\{(\theta_1,\theta_2,\ldots ,\theta_n)\in \{0,1,\ldots ,mh\}^n: \theta_1+\theta_2+\cdots +\theta_n\leq \alpha hn\}.
\]
Then we have that $\#\Theta_{m,\alpha,h}(n)\leq (\Lambda_{m,\alpha,h})^n$.
\end{lemma}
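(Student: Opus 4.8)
The plan is to run a standard generating-function (Chernoff-type) argument: bound the indicator of the constraint $\theta_1+\cdots+\theta_n\le \alpha hn$ by a suitable power of an auxiliary parameter $u\in(0,1]$, factor the resulting sum over the $n$ coordinates, and then optimize over $u$.

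First I would dispose of the degenerate case $\alpha=0$. There the constraint $\theta_1+\cdots+\theta_n\le 0$ together with $\theta_i\ge 0$ forces $\theta=(0,\ldots,0)$, so $\#\Theta_{m,0,h}(n)=1=(\Lambda_{m,0,h})^n$ by the convention $\Lambda_{m,0,h}:=1$ from Definition~\ref{definition=Lambda}$(1)$. So assume from now on that $\alpha>0$, and fix any $u\in(0,1]$. For every $\theta=(\theta_1,\ldots,\theta_n)\in\Theta_{m,\alpha,h}(n)$ the exponent $\theta_1+\cdots+\theta_n-\alpha hn$ is nonpositive, hence $u^{\theta_1+\cdots+\theta_n-\alpha hn}\ge 1$ since $0<u\le 1$. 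Therefore
\[
\#\Theta_{m,\alpha,h}(n)\;\le\;\sum_{\theta\in\Theta_{m,\alpha,h}(n)}u^{\theta_1+\cdots+\theta_n-\alpha hn}\;\le\;u^{-\alpha hn}\sum_{\theta\in\{0,1,\ldots,mh\}^n}u^{\theta_1+\cdots+\theta_n},
\]
where in the last step we have dropped the constraint and enlarged the index set to the full box $\{0,1,\ldots,mh\}^n$, which only increases the sum since every summand is positive. The sum over the box factorizes coordinatewise:
\[
\sum_{\theta\in\{0,1,\ldots,mh\}^n}u^{\theta_1+\cdots+\theta_n}\;=\;\prod_{i=1}^{n}\left(\sum_{t=0}^{mh}u^{t}\right)\;=\;\bigl(1+u+u^2+\cdots+u^{mh}\bigr)^n.
\]
Combining the two displays gives $\#\Theta_{m,\alpha,h}(n)\le\bigl(u^{-\alpha h}(1+u+\cdots+u^{mh})\bigr)^n=G_{m,\alpha,h}(u)^n$.

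Since this holds for every $u\in(0,1]$, I would finally take the infimum over $u\in(0,1]$ of the right-hand side. This infimum is attained: as $u\to 0^+$ one has $G_{m,\alpha,h}(u)\to\infty$ while $G_{m,\alpha,h}(1)=mh+1<\infty$ and $G_{m,\alpha,h}$ is continuous on $(0,1]$, so the infimum coincides with a minimum over a compact subinterval, and by Definition~\ref{definition=Lambda}$(1)$ it equals $\Lambda_{m,\alpha,h}$. This yields $\#\Theta_{m,\alpha,h}(n)\le(\Lambda_{m,\alpha,h})^n$, as desired. There is no serious obstacle here; the only points requiring a little care are the degenerate case $\alpha=0$ and the direction of the inequality $u^{s}\ge 1$ for $s\le 0$, which forces the restriction $u\le 1$ and explains why $G_{m,\alpha,h}$ is minimized over $(0,1]$ rather than over all of $\mathbb{R}_{>0}$.
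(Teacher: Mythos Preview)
Your proof is correct and follows essentially the same generating-function/Chernoff-type argument as the paper: bound the indicator of the constraint by $u^{\theta_1+\cdots+\theta_n-\alpha hn}$ for $u\in(0,1]$, arrive at $G_{m,\alpha,h}(u)^n$, and minimize over $u$. The paper routes the same computation through the multinomial expansion in the counts $j_i=\#\{l:\theta_l=i\}$, but this is only a cosmetic difference; your direct factorization over coordinates is, if anything, slightly cleaner.
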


\begin{proof}
If $\alpha=0$, then the inequality holds trivially because $1\leq 1$. In what follows, we assume that $\alpha>0$.

Given an $n$-tuple $(\theta_1,\theta_2,\ldots ,\theta_n)$ as above, we set for $i\in \{0,1,\ldots ,mh\}$, $j_i:=\#\{l\in [n]:\theta_l=i\}$. Then there is a one-to-one correpsondence between an $(mh+1)$-tuple $(j_0,j_1,\ldots ,j_{mh})$ of non-negative integers with the following two conditions
\begin{equation*}
\left\{
\begin{aligned}
j_0+j_1+j_2+j_3+\cdots+ j_{mh}&=n,\\
j_1+2j_2+3j_3+\cdots +(mh)j_{m}&\leq \alpha hn,
\end{aligned}
\right.
\end{equation*}
and a $\textit{multi}$set $\{\theta_1,\theta_2,\ldots ,\theta_n\}$, where we care the multiplicities, but we do not take the orders into account. This correspondence provides us with the following equality which involves multinomial coefficients:
\[
\#\Theta_{m,\alpha,h}(n)=\sum_{(j_0,j_1,\ldots ,j_{mh})}\binom{n}{j_0,j_1,\ldots ,j_{mh}}.
\]
Here, in the sum on the right-hand side of the equality above, $(j_0,\ldots ,j_{mh})$ runs over all tuple of non-negative integers satisfying the two conditions above. We write $J_{m,\alpha,h}(n)$ for the set of all tuples of non-negative integers $(j_0,\ldots ,j_{mh})$ fulfilling these two conditions. 

For $u\in (0,1]$, we have that
\begin{align*}
(G_{m,\alpha,h}(u))^n&=u^{-\alpha hn}(1+u+u^2+\cdots +u^{mh})^n\\
&=\sum_{(j_0,\ldots ,j_{mh})}\binom{n}{j_0,j_1,\ldots ,j_{mh}}u^{j_1+2j_2+\cdots +(mh)j_{mh}-\alpha hn}.
\end{align*}
Here in the sum on the very below side of the equalities above, the indices run over all non-negative integers with $j_0+j_1+\cdots+j_{mh}=n$. By removing indices with $j_1+2j_2+3j_3+\cdots +(mh)j_{mh}> \alpha hn$ from the sum above, we obtain that 
\begin{align*}
(G_{m,\alpha,h}(u))^n\geq \sum_{(j_0,\ldots ,j_{mh})\in J_{m,\alpha,h}(n)}\binom{n}{j_0,j_1,\ldots ,j_{mh}}u^{j_1+2j_2+\cdots +(mh)j_{mh}-\alpha hn}.
\end{align*}
Since $u\in (0,1]$, for each term in the sum on the right-hand side of the inequality above, it holds that $u^{j_1+2j_2+\cdots +(mh) j_{mh}-\alpha hn}\geq 1$. Therefore, we have that for every $u\in (0,1]$,
\begin{align*}
(G_{m,\alpha,h}(u))^n&\geq \sum_{(j_0,\ldots ,j_{mh})\in J_{m,\alpha,h}(n)}\binom{n}{j_0,j_1,\ldots ,j_{mh}}\\
&=\# \Theta_{m,\alpha,h}(n).
\end{align*}
We obtain the conclusion by taking the minimum of $G_{m,\alpha,h}(u)$ over $u\in (0,1]$.
\end{proof}

The following lemma is the background of `inequality \eqref{bigstar}', which appears in Theorem~\ref{mtheorem=slicerank}.

\begin{lemma}[Comparison between $\Lambda_{m,\alpha,d-1}$ and $d$]\label{lemma=lessthanp}
Let $d\geq 2$ be an integer.
\begin{enumerate}[$(1)$]
  \item Let $\alpha \in (0,\frac{1}{2})$. Then, we have 
\[
\frac{\Lambda_{1,\alpha,d-1}}{d}<1.
\]
Furthermore, $\frac{1}{d}\Lambda_{1,\alpha,d-1}$ is decreasing in $d\geq 2$. 
  \item Let $\beta\in (0,\frac{1}{e}]$. Then,  for every $m\in \mathbb{N}$, there exists $d_0(m,\beta)\geq 2$ such that for every $d\geq d_0(m,\beta)$, we have that
\[
\frac{\Lambda_{m,\beta,d-1}}{d}<\beta e \left(1-\frac{1}{2}e^{-\frac{m}{\beta}}\right) \quad (<1).
\]
\end{enumerate}
\end{lemma}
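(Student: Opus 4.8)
The plan is to analyze the one-variable function $G_{m,\alpha,d-1}(u)=u^{-\alpha(d-1)}(1+u+\cdots+u^{m(d-1)})$ directly, since $\Lambda_{m,\alpha,d-1}$ is by definition its minimum over $(0,1]$. For part $(1)$, with $m=1$ the bracket telescopes: $1+u+\cdots+u^{d-1}=\frac{1-u^{d}}{1-u}$, so $G_{1,\alpha,d-1}(u)=u^{-\alpha(d-1)}\frac{1-u^{d}}{1-u}$. To get an upper bound on the minimum it suffices to plug in one clever value of $u$. The natural choice, following the standard Ellenberg--Gijswijt / Blasiak--et-al.\ computation, is $u$ close to (but below) $1$, or more precisely the value that is used to establish $\Gamma_{p,3}<p$; I would substitute a near-optimal $u=u(d)$ and bound $u^{-\alpha(d-1)}\frac{1-u^d}{1-u}$ from above by $d$ times something strictly less than $1$. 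Concretely, writing $u=1-\frac{t}{d}$ for a suitable constant $t>0$ depending only on $\alpha$, one has $\frac{1-u^d}{1-u}\le d\cdot\frac{1-u^d}{d(1-u)}$ and $\frac{1-u^d}{d(1-u)}\to \frac{1-e^{-t}}{t}$, while $u^{-\alpha(d-1)}\to e^{\alpha t}$; so $\frac{1}{d}G_{1,\alpha,d-1}(u)\to e^{\alpha t}\cdot\frac{1-e^{-t}}{t}$, and one checks this is $<1$ for a good choice of $t$ precisely when $\alpha<\tfrac12$ (optimizing over $t$ gives the threshold $\tfrac12$: the infimum over $t$ of $e^{\alpha t}\frac{1-e^{-t}}{t}$ equals $1$ exactly at $\alpha=\tfrac12$). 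The monotonicity claim — that $\tfrac1d\Lambda_{1,\alpha,d-1}$ is decreasing in $d$ — I would get either from this asymptotic description together with a direct comparison of $G_{1,\alpha,d-1}$ and $G_{1,\alpha,d}$ at a common point, or by citing the known monotonicity of $\Gamma_{p,k}/p$-type quantities from \cite{BCCGNSU}; the cleanest route is to show $\frac{1}{d+1}G_{1,\alpha,d}(u^{d/(d+1)})\le \frac{1}{d}G_{1,\alpha,d-1}(u)$ pointwise after a change of variable, so that the minima compare correctly.

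For part $(2)$, the same substitution strategy applies but now $m$ is fixed and general and $\beta\le \tfrac1e$ is small. Set $u=1-\tfrac{t}{d}$; then $1+u+\cdots+u^{m(d-1)}=\frac{1-u^{m(d-1)+1}}{1-u}$, and as $d\to\infty$ we get $(1-u)\to 0$ with $d(1-u)\to t$ and $u^{m(d-1)}\to e^{-mt}$, so
\[
\frac{1}{d}G_{m,\beta,d-1}(u)\longrightarrow e^{\beta t}\cdot\frac{1-e^{-mt}}{t}.
\]
The target bound is $\frac{1}{d}\Lambda_{m,\beta,d-1}<\beta e\bigl(1-\tfrac12 e^{-m/\beta}\bigr)$, so I would choose $t=\tfrac1\beta$ (this is the value that makes $e^{\beta t}=e$ and is the natural optimizer as $\beta\to0$), giving the limiting value $e\beta(1-e^{-m/\beta})$. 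Since $e^{-m/\beta}>\tfrac12 e^{-m/\beta}$ strictly, the limit $e\beta(1-e^{-m/\beta})$ is strictly below $e\beta(1-\tfrac12 e^{-m/\beta})$, so for all $d$ large enough (depending on $m$ and $\beta$) the value of $\tfrac1d G_{m,\beta,d-1}(1-\tfrac1{\beta d})$ — hence $\tfrac1d\Lambda_{m,\beta,d-1}$, which is no larger — lies below $e\beta(1-\tfrac12 e^{-m/\beta})$. This furnishes the required $d_0(m,\beta)$. Finally one checks $e\beta(1-\tfrac12 e^{-m/\beta})<1$: for $\beta\le\tfrac1e$ we have $e\beta\le1$, and the factor $(1-\tfrac12 e^{-m/\beta})<1$, so the product is $<1$ (with the mild care that at $\beta=\tfrac1e$ the first factor equals $1$ and the second is strictly less than $1$).

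The genuinely delicate point is not any single estimate but controlling the passage from ``the function evaluated at my chosen $u=1-t/d$ is small'' to ``the minimum $\Lambda$ is small,'' uniformly enough to extract an honest $d_0(m,\beta)$ rather than just an asymptotic statement — this requires making the limits above quantitative (explicit error terms in $d$), which is where the bookkeeping is heaviest. A secondary subtlety in part $(1)$ is that the inequality must hold for \emph{all} $d\ge2$, not just large $d$, so the near-$1$ substitution alone is insufficient near $d=2$; there I would either verify the small cases by hand (for $d=2$, $G_{1,\alpha,1}(u)=u^{-\alpha}(1+u)$, whose minimum is elementary) or, better, prove the decreasing-in-$d$ statement first and then only need the large-$d$ (limiting) inequality, which is exactly the structure the lemma is stated in. I expect the write-up to lean on the explicit optimizer $u$ and on Lemma~\ref{lemma=sum}'s companion computations, with the main effort spent on the uniformity needed for $d_0(m,\beta)$.
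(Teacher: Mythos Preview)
Your argument for part~$(2)$ is essentially the paper's: the paper also substitutes $u=1-\tfrac{1}{\beta d}$, observes that the bound converges to $\beta e(1-e^{-m/\beta})$ as $d\to\infty$, and concludes by noting that this limit is strictly below $\beta e\bigl(1-\tfrac12 e^{-m/\beta}\bigr)$, which furnishes $d_0(m,\beta)$. So there you match.

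For the first assertion of part~$(1)$, however, your asymptotic route is considerably more laborious than the paper's, and you correctly flag the resulting difficulty: the substitution $u=1-t/d$ only handles large $d$, forcing you either to check small $d$ by hand or to prove the monotonicity first and work backward. The paper sidesteps all of this with a one-line observation: $G_{1,\alpha,d-1}(1)=d$, and a direct computation gives
\[
G'_{1,\alpha,d-1}(1)=(d-1)d\left(\tfrac{1}{2}-\alpha\right)>0\quad\text{for }\alpha<\tfrac{1}{2}.
\]
Since $G_{1,\alpha,d-1}$ is continuous on $(0,1]$, takes the value $d$ at the right endpoint, and has positive derivative there, its minimum over $(0,1]$ is strictly less than $d$ --- for \emph{every} $d\ge 2$, with no asymptotics, no case-checking, and no appeal to monotonicity. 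Your limiting computation $e^{\alpha t}\frac{1-e^{-t}}{t}\to 1$ as $t\to 0^+$ with slope $\alpha-\tfrac12$ is in fact the same derivative calculation seen through the substitution, so you were circling the simple argument without landing on it. For the monotonicity claim itself, the paper does exactly what you suggest as one option: it cites \cite[Proposition~4.12]{BCCGNSU} rather than proving it directly.
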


\begin{proof}
The former assertion of $(1)$ follows from $G_{1,\alpha,d-1}(1)=d$ and from $G'_{1,\alpha,d-1}(1)>0$ if $\alpha \in (0,\frac{1}{2})$. The latter assertion of $(1)$ is non-trivial, and showed in \cite[Proposition~4.12]{BCCGNSU}. (There, an expression of $\lim_{d\to \infty}(\frac{1}{d}\Lambda_{1,\alpha,d-1})$ is moreover given.)

Next, we will prove $(2)$. By definition, for every $u\in (0,1)$,
\begin{align*}
\frac{\Lambda_{m,\beta,d-1}}{d}&\leq \frac{1}{d}G_{m,\beta,d-1}(u)\\
&= \frac{1}{d(1-u)}u^{-\beta d}(1-u^{m(d-1)+1}) \\
&<\frac{1}{d(1-u)}u^{-\beta d}(1-u^{md}).
\end{align*}
For $d>\frac{1}{\beta}$, substitute $u=u_{\beta,d}:=1-\frac{1}{\beta d}\in (0,1)$. Then we obtain that
\[
\frac{\Lambda_{m,\beta,d-1}}{d}\leq \beta u_{\beta,d}^{-\beta d}(1-u_{\beta,d}^{md}).
\]
As $d\to \infty$, the right-hand side of the inequality above converges to $\beta e(1-e^{-\frac{m}{\beta}})$. Therefore, if $\beta\in (0,\frac{1}{e}]$, then for sufficiently large $d$ (depending on $m$ and $\beta$), we have that 
\[
\frac{\Lambda_{m,\beta,d-1}}{d}< \beta e \left(1-\frac{1}{2}e^{-\frac{m}{\beta}}\right) \quad (<1).
\]
\end{proof}

\section{Proof of Theorem~\ref{mtheorem=slicerank}}\label{section=slicerank}

First we prove the following proposition.

\begin{proposition}[Expression of the upper bound in Theorem~\ref{mtheorem=slicerank}]\label{proposition=slicerank}
Let $p$ be a prime. Consider a balanced and irreducible system \eqref{T} of $L$ $\mathbb{F}_p$-equations in  $r$ variables:
\begin{equation*}
\left\{
\begin{aligned}
a_1^{(1)}x_1+a_2^{(1)}x_2+a_3^{(1)}x_3+\cdots +a_r^{(1)}x_r&=0,\\
a_1^{(2)}x_1+a_2^{(2)}x_2+a_3^{(2)}x_3+\cdots +a_r^{(2)}x_r&=0,\\
\vdots\qquad\qquad \qquad\\
a_1^{(L)}x_1+a_2^{(L)}x_2+a_3^{(L)}x_3+\cdots +a_r^{(L)}x_r&=0.\end{aligned}
\right.
\end{equation*}
Let $m_1,m_2,\ldots ,m_r$ be, respectively, the multiplicities of $x_1,x_2,\ldots ,x_r$. Then for every $r$-tuple of real numbers $(\alpha_1,\alpha_2,\ldots,\alpha_r)$ satisfying $\alpha_1+\alpha_2+\cdots +\alpha_r=L$ and $\alpha_i\geq 0$ for $i\in [r]$, we have the following estimate for every $n\in \mathbb{N}$:
\[
\mathrm{ex}_{\mathcal{T}}(n)\leq \sum_{i=1}^r (\Lambda_{m_i,\alpha_i,p-1})^n.
\]
\end{proposition}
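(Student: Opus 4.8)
The plan is to run the slice rank (Croot--Lev--Pach--Ellenberg--Gijswijt) argument in the form packaged by Tao, specialized to a balanced irreducible system $\mathcal{T}$. Suppose $A\subseteq \mathbb{F}_p^n$ is strongly $\mathcal{T}$-free, so that every $\mathcal{T}$-semishape in $A$ is a singleton: if $(x_1,\dots,x_r)\in A^r$ satisfies all $L$ equations, then $x_1=\dots=x_r$. The first step is to build, from the $L$ equations, a function $f\colon A^r\to\mathbb{F}_p$ that vanishes off the diagonal and is nonzero on it, so that Proposition~\ref{proposition=Tao} gives $\mathrm{sr}(f)=\#A$. The natural choice is
\[
f(x_1,\dots,x_r)=\prod_{l=1}^{L}\prod_{t=1}^{n}\Bigl(1-\bigl(a_1^{(l)}x_{1,t}+\cdots+a_r^{(l)}x_{r,t}\bigr)^{p-1}\Bigr),
\]
where $x_{i,t}$ denotes the $t$-th coordinate of $x_i\in\mathbb{F}_p^n$. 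For fixed $(x_1,\dots,x_r)$, the factor over $l$ and $t$ equals $1$ if the $l$-th equation holds in coordinate $t$ and $0$ otherwise; hence $f$ is the indicator of being a $\mathcal{T}$-semishape. Since $A$ is strongly $\mathcal{T}$-free, $f$ is supported on (and equals $1$ on) the diagonal of $A^r$, so condition \eqref{ast} holds and $\mathrm{sr}(f)=\#A$.

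The second step is to bound $\mathrm{sr}(f)$ from above by expanding $f$ as a polynomial and grouping monomials. Multiplying out, $f$ is an $\mathbb{F}_p$-linear combination of monomials $\prod_{i=1}^{r}\prod_{t=1}^{n} x_{i,t}^{\theta_{i,t}}$ with each exponent $\theta_{i,t}\in\{0,1,\dots,p-1\}$; moreover, because the variable $x_i$ appears only in the $m_i$ factors corresponding to equations $l$ with $a_i^{(l)}\neq 0$, and each such factor contributes degree at most $p-1$ in each coordinate of $x_i$ \emph{but} the coordinates are tied together (the exponent on $x_{i,t}$ over all factors sums to at most $m_i(p-1)$ while the total degree in the block $x_i$ is at most $m_i(p-1)\cdot$something) --- more precisely, each of the $L$ factors contributes total degree $\le p-1$ summed over the $n$ coordinates of any single block, so the block $x_i$ receives total degree $\le m_i(p-1)$ over its $n$ coordinates, with per-coordinate degree $\le p-1$... the cleaner bookkeeping is: in factor $l$, the coordinate-$t$ part $1-(\sum_i a_i^{(l)}x_{i,t})^{p-1}$ has degree $\le p-1$ in the variables $\{x_{i,t}:i\in e_l\}$ jointly; so in the full product the block of variables $x_i$ (all $n$ coordinates) gets, from each of its $m_i$ incident factors, a joint degree contribution, and summing the per-coordinate exponents of block $x_i$ over $t\in[n]$ yields a total at most $m_i(p-1)n/\,?$ --- this is exactly the combinatorial lemma to nail down. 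Then for a monomial, at least one block $x_i$ must have $\sum_t \theta_{i,t}\le \alpha_i(p-1)n$, because $\sum_{i}\alpha_i=L$ and the total degree across all blocks is $\le L(p-1)n$ (an averaging / pigeonhole over the weights $\alpha_i$). For that block, the monomial $\prod_t x_{i,t}^{\theta_{i,t}}$ is a one-variable function of $x_i$, and the number of such monomials is $\#\Theta_{m_i,\alpha_i,p-1}(n)\le(\Lambda_{m_i,\alpha_i,p-1})^n$ by Lemma~\ref{lemma=sum} (with $h=p-1$). Grouping the expansion of $f$ by which block is ``small'' and factoring out that block's monomial exhibits $f$ as a sum of at most $\sum_{i=1}^r(\Lambda_{m_i,\alpha_i,p-1})^n$ slice functions, so $\mathrm{sr}(f)\le\sum_{i=1}^r(\Lambda_{m_i,\alpha_i,p-1})^n$.

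Combining the two steps gives $\#A=\mathrm{sr}(f)\le\sum_{i=1}^r(\Lambda_{m_i,\alpha_i,p-1})^n$, which is the claimed bound; taking the best admissible $(\alpha_1,\dots,\alpha_r)$ and then specializing to $\alpha_i=\alpha$ for simple variables and $\alpha_i=\beta$ for non-simple ones recovers the form of Theorem~\ref{mtheorem=slicerank} via Lemma~\ref{lemma=lessthanp}. The step I expect to be the main obstacle is the precise degree bookkeeping in the second step: verifying that, after expansion, the exponent vector restricted to block $x_i$ lies in $\{0,\dots,m_i(p-1)\}^n$ with coordinate sum controlled, so that Lemma~\ref{lemma=sum} applies with parameter $m=m_i$. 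This requires tracking that the variable $x_i$ participates in exactly $m_i$ of the $L$ factors and that within each factor the coordinate-$t$ exponent of $x_{i,t}$ is at most $p-1$, so across the $m_i$ incident factors the coordinate-$t$ exponent is at most $m_i(p-1)$ while the sum of the coordinate exponents contributed by a single factor is at most $p-1$; combined with the pigeonhole choosing the block on which $\sum_t\theta_{i,t}\le\alpha_i(p-1)n$, this is exactly the hypothesis of $\Theta_{m_i,\alpha_i,p-1}(n)$. Everything else (the diagonal vanishing property, subadditivity of slice rank, and Proposition~\ref{proposition=Tao}) is routine.
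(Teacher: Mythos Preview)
Your proposal is correct and follows essentially the same route as the paper: the same indicator polynomial $f$, the same appeal to Proposition~\ref{proposition=Tao} to get $\#A=\mathrm{sr}(f)$, and the same monomial expansion with the pigeonhole over the weights $\alpha_i$ (using $\sum_i\alpha_i=L$ and total degree $\le L(p-1)n$) to pick a ``small'' block and invoke Lemma~\ref{lemma=sum}. The degree bookkeeping you flagged as the obstacle is exactly what the paper does, and your final paragraph resolves it correctly: each coordinate exponent $\theta_{i,t}$ lies in $\{0,\dots,m_i(p-1)\}$ because $x_i$ appears in $m_i$ of the $L$ factors, so the exponent vector for block $i$ lands in $\Theta_{m_i,\alpha_i,p-1}(n)$ once that block is chosen.
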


\begin{proof}
Fix an $r$-tuple $(\alpha_1,\alpha_2,\ldots,\alpha_r)$ as above. Assume that 
$A\subseteq \mathbb{F}_p^n$ is strongly $\mathcal{T}$-free. 
Define a polynomial $f\colon A^r\to \mathbb{F}_p$ by
\begin{align*}
f(x_1,\ldots ,x_r):=\prod_{l=1}^L\left(\prod_{k=1}^n\left(1-(a_1^{(l)}x_{1,k}+a_2^{(l)}x_{2,k}+\cdots +a_r^{(l)}x_{r,k})^{p-1}\right)\right).
\end{align*}
Here, for every $i\in [r]$, we express $x_i\in \mathbb{F}_p^n$ as $x_i=(x_{i,1},x_{i,2},\ldots ,x_{i,n})$. We claim that this polynomial $f$ fulfills condition  \eqref{ast} as in Proposition~\ref{proposition=Tao}. Indeed, first note that for $a\in \mathbb{F}_p$, 
\[
a^{p-1}=\left\{\begin{array}{lc}
1, &\textrm{if $a\ne 0$},\\
0, &\textrm{if $a= 0$}.
\end{array}\right.
\]
Then combine it with strong $\mathcal{T}$-freeness of $A$; it follows that 
\[
f(x_1,x_2,\ldots,x_r)=\left\{\begin{array}{lc}
1, &\textrm{if $x_1=x_2=\cdots=x_r$},\\
0, &\textrm{otherwise}.
\end{array}\right.
\]
Therefore, Proposition~\ref{proposition=Tao} implies that $\#A=\mathrm{sr}(f)$. 
Now we proceed to bounding $\mathrm{sr}(f)$ from above. In the definition of $f$, expand the product. Then $f$ may be written as the sum of scalar multiples of monomials of the following form:
\[
x_{1,1}^{\theta_{1,1}}\cdots x_{1,n}^{\theta_{1,n}}x_{2,1}^{\theta_{2,1}}\cdots x_{2,n}^{\theta_{2,n}}\cdots x_{r,1}^{\theta_{r,1}}\cdots x_{r,n}^{\theta_{r,n}}.
\]
Here, for each $i\in [r]$, setting $\theta(i):=\sum_{k=1}^n\theta_{i,k}$, we have that the sequence of non-negative integers $(\theta_{i,k})_{i\in [r],\ k\in [n]}$ satisfies that
\[
\theta_{i,k}\in \{0,1,\ldots ,m_i(p-1)\},\quad \textrm{and}\quad \sum_{i=1}^{r}\theta(i)=L(p-1)n.
\]
Since $\alpha_1+\alpha_2+\cdots +\alpha_r=L$, for each monomial above, there exists at least one $i\in [r]$ that satisfies $\theta(i)\leq \alpha_i(p-1)n$. For each monomial, we choose the smallest index $i\in [r]$ with the property above. For $i\in [r]$, let $f_i$ be the sum of monomials with coefficients whose index, determined in the manner above, equals $i$. (Here we regard the empty sum as $0$, if it appears.) Since $f=\sum_{i=1}^rf_i$, subadditivity of the slice rank implies that $\mathrm{sr}(f)\leq \sum_{i=1}^r\mathrm{sr}(f_i)$.

Note that for each $i\in [r]$, $f_i$ may be written as the sum of the form
\[
f_i(x_1,x_2,\ldots ,x_r)=\sum_{\theta=(\theta_{i,1},\ldots ,\theta_{i,n})\in \Theta_{m_i,\alpha_i,p-1}}f_{\theta}(x_i)g_{\theta}(x_1,\ldots ,x_{i-1},x_{i+1},\ldots ,x_{r}),
\]
where $\Theta_{m,\alpha,h}$ is the set of indices defined in Lemma~\ref{lemma=sum}; Lemma~\ref{lemma=sum} shows that $\mathrm{sr}(f_i)\leq (\Lambda_{m_i,\alpha_i,p-1})^n$. Therefore, 
\[
\#A(=\mathrm{sr}(f))\quad \leq \sum_{i=1}^r(\Lambda_{m_i,\alpha_i,p-1})^n,
\]
as desired.
\end{proof}

\begin{proof}[Proof of Theorem~\ref{mtheorem=slicerank}]
First, given a system $\mathcal{T}$, define
\[
\overline{C}_{\mathcal{T}}(p):=\inf\left\{\max_{i\in [r]}\Lambda_{m_i,\alpha_i,p-1}: \sum_{i=1}^r\alpha_i=L,\ \alpha_i\geq 0\right\}.
\]
Then, Proposition~\ref{proposition=slicerank} implies that $\mathrm{ex}_{\mathcal{T}}(n)\leq r (\overline{C}_{\mathcal{T}}(p))^n$. The direct product construction shows that for $n,n'\in \mathbb{N}$, in general, $\mathrm{ex}_{\mathcal{T}}(n+n')\geq\mathrm{ex}_{\mathcal{T}}(n)\cdot \mathrm{ex}_{\mathcal{T}}(n')$. Hence the inequality above, in fact, shows that
\[
\mathrm{ex}_{\mathcal{T}}(n)\leq (\overline{C}_{\mathcal{T}}(p))^n;
\]
this argument is sometimes called a `\textit{power trick}'. For $m\geq m'$, it holds that $\Lambda_{m,\alpha,h}\geq \Lambda_{m',\alpha,h}$. Hence by replacing $m_i$ with $m_{\mathcal{T}}$ if $m_i\geq 2$, we have that $\overline{C}_{\mathcal{T}}(p)\leq \tilde{C}_{(r_1,r_2,L,m_{\mathcal{T}})}(p)$.

Finally, we make a comparison between $\tilde{C}_{(r_1,r_2,L,m_{\mathcal{T}})}(p)$ and $p$. Now assume that the parameters $(r_1,r_2,L)$ satisfy `inequality \eqref{bigstar}'. Then we claim that there exists a constant $\tilde{c}_{(r_1,r_2,L,m_{\mathcal{T}})}<1$, not depending on $p$, such that for sufficiently large $p$, we have that
\[
\frac{\tilde{C}_{(r_1,r_2,L,m_{\mathcal{T}})}(p)}{p}< \tilde{c}_{(r_1,r_2,L,m_{\mathcal{T}})}.
\]
Indeed, by `inequality \eqref{bigstar}', there exist $\alpha<\frac{1}{2}$ and $\beta< \frac{1}{e}$ with $r_1\alpha+r_2\beta=L$. Fix such a pair $(\alpha,\beta)$. Then, Lemma~\ref{lemma=lessthanp} implies that
\begin{itemize}
    \item for every $p\geq 3$, $\frac{1}{p}\Lambda_{1,\alpha,p-1}\leq \frac{1}{3}\Lambda_{1,\alpha,2}<1$,
  \item for sufficiently large $p$, $\frac{1}{p}\Lambda_{m_{\mathcal{T}},\beta,p-1}<\beta e(<1)$.
\end{itemize}

What remains is treatment of the case where $p$ is small. Here we employ a variant of `\textit{tensor power trick}' on finite fields. For such a small $p$, take $N\in \mathbb{N}$ sufficiently large such that the inequality in the second item above is valid if we replace $p$ with $q:=p^N$. Note that there is an upper bound for such an $N$ because $p\geq 2$. Consider the finite field $\mathbb{F}_{q}$ and consider $\mathcal{T}$ as a system of $\mathbb{F}_q$-equations via standard embedding $\mathbb{F}_p\subseteq \mathbb{F}_q$. We write $\mathcal{T}_q$ for this system. Replace the power $(p-1)$ with the power $(q-1)$ in the construction of the polynomial $f$ in the proof of Proposition~\ref{proposition=slicerank}. Then, the slice rank method works as well for $\mathcal{T}_q$ and it provides that
\[
\mathrm{ex}_{\mathcal{T}_q}(n)\leq (\tilde{C}_{(r_1,r_2,L,m_{\mathcal{T}})}(q))^n.
\]
Now observe that all coefficients of the system $\mathcal{T}_q$ are in $\mathbb{F}_p$ and that as $\mathbb{F}_p$-vector space, $\mathbb{F}_q\simeq (\mathbb{F}_p)^N$. It then follows that $\mathrm{ex}_{\mathcal{T}_q}(n)=\mathrm{ex}_{\mathcal{T}}(nN)(\geq (\mathrm{ex}_{\mathcal{T}}(n))^N)$. Therefore, by setting $c_{\mathcal{T}}(p):=\frac{1}{p} \left(\tilde{C}_{(r_1,r_2,L,m_{\mathcal{T}})}(q)\right)^{\frac{1}{N}}$, we have the assertion even for small $p$.
\end{proof}

We, furthermore, extend Theorem~\ref{mtheorem=slicerank} to the `\textit{multicolored version}'. To state it, we provide the following definition; see \cite[Definition~3.1]{BCCGNSU} for more details.

\begin{definition}[Multicolored $\mathcal{T}$-free sets]\label{definition=multicolored}
Let $\mathcal{T}$ be a balanced system of $\mathbb{F}_p$-equations in $r$ variables. For fixed $X_1,\ldots,X_r\subseteq \mathbb{F}_p^n$ with $\#X_1=\cdots=\#X_r=s \in \mathbb{N}$, we say that an $r$-dimensional perfect matching $M\subseteq X_1\times\cdots\times X_r$ is an \textit{$r$-colored $\mathcal{T}$-free set} if for every $\bm{x}=(x_1,\ldots,x_r)\in  X_1\times\cdots\times X_r$, it holds that
\begin{center}
$\bm{x}$ is a $\mathcal{T}$-semishape\quad  $\Longleftrightarrow$ \quad $\bm{x}\in M$.
\end{center}
\end{definition}
In particular, if $M$ is expressed as $M=\{(x_{1,i},\ldots,x_{r,i}):1\leq i\leq m\}$, then $M$ is $r$-colored $\mathcal{T}$-free if the equivalence
\begin{center}
$(x_{1,i_1},\ldots,x_{r,i_r})$ is a $\mathcal{T}$-semishape\quad  $\Longleftrightarrow$ \quad 
$i_1=\cdots=i_r$
\end{center}
holds true. In this case, each element $(x_{1,i},\ldots,x_{r,i})$ in $M$ can be regarded as an `$r$-colored singleton semishape'. If $X_1=\cdots=X_r$, then $r$-colored $\mathcal{T}$-freeness is equivalent to strong $\mathcal{T}$-freeness defined in Definition~\ref{definition=Erdos}. 

We may obtain the following result. It implies Theorem~\ref{mtheorem=slicerank}  from the reasoning above; nevertheless, we can see it as a corollary to the proofs of (Proposition~\ref{proposition=slicerank} and) Theorem~\ref{mtheorem=slicerank}. Recall from Definition~\ref{definition=Lambda} the definition of $\tilde{C}_{(r_1,r_2,L,m)}(d)$. 

\begin{mcorollary}[Upper bounds for the multicolored version]\label{mcorollary=multicolor}
Let $\mathcal{T}$ be a balanced and irreducible system of $L$ $\mathbb{F}_p$-equations in $r$ variables. Let $(r_1,r_2,L)$ be the parameters of $\mathcal{T}$. Assume that they satisfy `inequality \eqref{bigstar}'. Let $X_1,\ldots,X_r\subseteq \mathbb{F}_p^n$ satisfy $\#X_1=\cdots=\#X_r=s \in\mathbb{N}$. Assume that $M\subseteq X_1\times\cdots\times X_r$ is $r$-colored $\mathcal{T}$-free. Then, we have that 
\[
s\leq (\tilde{C}_{(r_1,r_2,L,m_{\mathcal{T}})}(p))^n.
\]
\end{mcorollary}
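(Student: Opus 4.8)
The plan is to mimic the proof of Proposition~\ref{proposition=slicerank} and Theorem~\ref{mtheorem=slicerank}, replacing the single set $A$ by the multicolored data $X_1,\ldots,X_r$ and the singleton condition ``$x_1=\cdots=x_r$'' by the diagonal condition ``$i_1=\cdots=i_r$'' on the index sets. Concretely, write $M=\{(x_{1,i},\ldots,x_{r,i}):1\le i\le s\}$ and define $f\colon [s]^r\to\mathbb{F}_p$ by
\[
f(i_1,\ldots,i_r):=\prod_{l=1}^L\prod_{k=1}^n\left(1-\bigl(a_1^{(l)}x_{1,i_1,k}+a_2^{(l)}x_{2,i_2,k}+\cdots+a_r^{(l)}x_{r,i_r,k}\bigr)^{p-1}\right),
\]
where $x_{j,i}=(x_{j,i,1},\ldots,x_{j,i,n})$. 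By the $r$-colored $\mathcal{T}$-freeness of $M$, the tuple $(x_{1,i_1},\ldots,x_{r,i_r})$ is a $\mathcal{T}$-semishape exactly when $i_1=\cdots=i_r$, and using $a^{p-1}\in\{0,1\}$ for $a\in\mathbb{F}_p$ one checks that $f(i_1,\ldots,i_r)=1$ if $i_1=\cdots=i_r$ and $=0$ otherwise. Thus $f$ satisfies condition~\eqref{ast} of Proposition~\ref{proposition=Tao} (with $X=[s]$), so $\mathrm{sr}(f)=s$.

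Next I would bound $\mathrm{sr}(f)$ from above exactly as in the proof of Proposition~\ref{proposition=slicerank}. Fix a tuple $(\alpha_1,\ldots,\alpha_r)$ with $\alpha_i\ge 0$ and $\sum_i\alpha_i=L$. Expand $f$ as a sum of scalar multiples of monomials $\prod_{i,k}x_{i,i_i,k}^{\theta_{i,k}}$; since $x_i$ has multiplicity $m_i$, each exponent satisfies $\theta_{i,k}\in\{0,1,\ldots,m_i(p-1)\}$ and $\sum_{i=1}^r\theta(i)=L(p-1)n$ where $\theta(i)=\sum_k\theta_{i,k}$. Because $\sum_i\alpha_i=L$, every monomial admits some $i$ with $\theta(i)\le\alpha_i(p-1)n$; assign each monomial to the least such $i$, and let $f_i$ be the corresponding partial sum so that $f=\sum_i f_i$. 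Here the ``variable'' $x_{i,i_i,k}$ depends only on the index $i_i$, so $f_i$ factors as $\sum_{\theta\in\Theta_{m_i,\alpha_i,p-1}(n)}f_\theta(i_i)\,g_\theta(i_1,\ldots,i_{i-1},i_{i+1},\ldots,i_r)$, whence $\mathrm{sr}(f_i)\le\#\Theta_{m_i,\alpha_i,p-1}(n)\le(\Lambda_{m_i,\alpha_i,p-1})^n$ by Lemma~\ref{lemma=sum}. Subadditivity of slice rank gives $s=\mathrm{sr}(f)\le\sum_{i=1}^r(\Lambda_{m_i,\alpha_i,p-1})^n$, which is the multicolored analogue of Proposition~\ref{proposition=slicerank}.

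From here the passage to the stated bound follows the proof of Theorem~\ref{mtheorem=slicerank}. Taking the infimum over admissible $(\alpha_1,\ldots,\alpha_r)$ of the max, one gets $s\le r\,(\overline{C}_{\mathcal{T}}(p))^n$; the product/power trick upgrades this to $s\le(\overline{C}_{\mathcal{T}}(p))^n$ since the direct product of $r$-colored $\mathcal{T}$-free sets over $\mathbb{F}_p^n$ and $\mathbb{F}_p^{n'}$ is $r$-colored $\mathcal{T}$-free over $\mathbb{F}_p^{n+n'}$ of size the product of the sizes; and the monotonicity $\Lambda_{m,\alpha,h}\ge\Lambda_{m',\alpha,h}$ for $m\ge m'$ lets one replace each non-simple $m_i$ by $m_{\mathcal{T}}$, yielding $\overline{C}_{\mathcal{T}}(p)\le\tilde{C}_{(r_1,r_2,L,m_{\mathcal{T}})}(p)$ and hence $s\le(\tilde{C}_{(r_1,r_2,L,m_{\mathcal{T}})}(p))^n$. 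Note that the small-$p$ tensor-power-trick step of Theorem~\ref{mtheorem=slicerank} is not needed here, since we are not comparing with $p$ but only asserting the bound in terms of $\tilde{C}_{(r_1,r_2,L,m_{\mathcal{T}})}(p)$ itself; `inequality \eqref{bigstar}' is only invoked to guarantee $\tilde{C}_{(r_1,r_2,L,m_{\mathcal{T}})}(p)<p$, which is what makes the bound nontrivial but is not required for the displayed inequality.

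The main point to get right — rather than a genuine obstacle — is the bookkeeping that in the multicolored setting the slice functions must be slices in the \emph{index} variables $i_1,\ldots,i_r\in[s]$, not in the field coordinates: one has to observe that a monomial in the coordinates $x_{i,i_i,k}$, once we fix which index $i$ is ``small'', is literally a function of $i_i$ times a function of the remaining indices, so the slice-rank estimate over the ground set $X=[s]$ applies verbatim. Everything else is a direct transcription of the already-proved arguments.
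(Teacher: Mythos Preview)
Your proposal is correct and matches the paper's own proof essentially verbatim: the paper defines the same polynomial $F\colon [s]^r\to\mathbb{F}_p$, observes that the slice rank argument of Proposition~\ref{proposition=slicerank} goes through unchanged to give $s\le r(\tilde{C}_{(r_1,r_2,L,m_{\mathcal{T}})}(p))^n$, and then invokes the power trick to drop the factor $r$. Your additional remarks---that the slices are in the index variables $i_j\in[s]$, that the small-$p$ tensor-power step is unnecessary here, and that inequality~\eqref{bigstar} plays no role in the displayed bound---are all accurate and make explicit points the paper leaves implicit.
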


\begin{proof}
Consider the following polynomial $F\colon [s]^r\to \mathbb{F}_p$:
\begin{align*}
F(i_1,i_2\ldots ,i_r):=\prod_{l=1}^L\left(\prod_{k=1}^n\left(1-(a_1^{(l)}x_{1,i_1,k}+a_2^{(l)}x_{2,i_2,k}+\cdots +a_r^{(l)}x_{r,i_r,k})^{p-1}\right)\right).
\end{align*}
Here we express $x_{j,i_j}$ as $x_{j,i_j}=(x_{j,i_j,1},x_{j,i_j,2},\ldots ,x_{j,i_j,n})$. Then the slice rank method as in the proof of Proposition~\ref{proposition=slicerank} applies to this $F$; it provides that $s\leq r(\tilde{C}_{(r_1,r_2,L,m_{\mathcal{T}})}(p))^n$. The power trick enables us to drop $r$ in the right-hand side of the inequality above.
\end{proof}

As we mentioned in the Introduction, Corollary~\ref{mcorollary=multicolor} will be employed in the proof of Theorem~\ref{mtheorem=Wshape}; compare with \cite[Theorem~2.2]{Sauermann}.

\begin{remark}\label{remark=lessthanpW}
Here we give more precise estimates of the conclusion of Corollary~\ref{mcorollary=multicolor} in the case where $\mathcal{T}=\mathcal{S}_{\mathrm{W}}(p)$, for we will utilize them in Theorem~\ref{mtheorem=Wshape}. Let $p\geq 3$. Then the parameters of $\mathcal{T}=\mathcal{S}_{\mathrm{W}}(p)$ are $(r_1,r_2,L,m_{\mathcal{T}})=(3,2,2,2)$. Hence, we can set, as stated in Theorem~\ref{mtheorem=Wshape},
\[
C_{\mathrm{W}}(p)=\tilde{C}_{(3,2,2,2)}(p)\quad (=\inf\{\max\{\Lambda_{1,\alpha,p-1},\Lambda_{2,\beta,p-1}\}:3\alpha+2\beta=2,\ \alpha,\beta\geq 0 \}).
\]

First, we will observe for all $p\geq 3$, it holds that $\frac{1}{p}C_{\mathrm{W}}(p)<1$. Let $\alpha_1=\frac{1.9}{3.85}$ and $\beta_1=\frac{1}{3.85}$. Then, $C_{\mathrm{W}}(p)\leq \max\{\Lambda_{1,\alpha_1,p-1},\Lambda_{2,\beta_1,p-1}\}$. Since $\alpha_1<\frac{1}{2}$, Lemma~\ref{lemma=lessthanp}$(1)$ implies that $\frac{1}{p}\Lambda_{1,\alpha_1,p-1}<1$. In what follows, we examine for which $p$, $\Lambda_{2,\beta_1,p-1}<p$ holds. In a similar argument to one in the proof of Lemma~\ref{lemma=lessthanp}$(2)$, we have that for every $p\geq 5$, 
\[
\frac{\Lambda_{2,\beta_1,p-1}}{p}\leq \frac{1}{3.85} \left( 1-\frac{3.85}{p}\right)^{-\frac{p}{3.85}}.
\]
Note that for $x>1$, the function $x\mapsto \left(1-\frac{1}{x}\right)^{-x}$ is decreasing. Since $\left(1-\frac{3.85}{11}\right)^{-\frac{11}{3.85}}=3.42\ldots <3.85$, the inequality above implies that for $p\geq 11$, $\frac{1}{p}C_{\mathrm{W}}(p)<1$. For $p=3$, $5$ and $7$, numerical computations show that $\frac{1}{p}C_{\mathrm{W}}(p)$ are, respectively, at most $0.994$, $0.987$ and  $0.983$. Hence, we conclude that for all prime $p\geq 3$, it holds that $\frac{1}{p}C_{\mathrm{W}}(p)<1$.

Secondly, we will see that for sufficiently large $p$, it holds that $\frac{1}{p}C_{\mathrm{W}}(p)<0.97$. This time we set $\alpha_2=0.428$ and $\beta_2=0.358$. Then, we have that $C_{\mathrm{W}}(p)\leq \max\{\Lambda_{1,\alpha_2,p-1},\Lambda_{2,\beta_2,p-1}\}$.
By setting $u_{2}=1-\frac{0.874964}{p}$ and $v_{2}=1-\frac{2.72792}{p}$, we obtain by numerical computation that
\[
\lim_{p\to \infty}\frac{G_{1,\alpha_2,p-1}(u_2)}{p}=0.969185\ldots,\quad \textrm{and} \quad \lim_{p\to \infty}\frac{G_{2,\beta_2,p-1}(v_2)}{p}=0.969258\ldots.
\]
Therefore, we have that for sufficiently large $p$,
\[
\frac{C_{\mathrm{W}}(p)}{p}< 0.97  \quad (<(0.985)^2 ).
\]
\end{remark}

\section{Dominant reduction and lower bounds of $\mathrm{ex}_{\mathcal{S}}(n,p)$}\label{section=dominant}

In this section, we provide an lower bound of $\mathrm{ex}_{\mathcal{S}}(n,p)$  under certain conditions in terms of the dominant property of $\mathbb{Z}$-equations. Recall from the Introduction that for every $p\geq 3$ and every $n\in \mathbb{N}$, we have $\mathrm{ex}_{\mathcal{S}_{\mathrm{W}}}(n,p)=1$. Here $\mathcal{S}_{\mathrm{W}}$ is the system of $\mathbb{Z}$-equations representing a `$\mathrm{W}$ shape'. Hence, in this case, the upper bound given by Theorem~\ref{mtheorem=slicerank} itself is by no means reasonable.

\subsection{Proof of Theorem~$\ref{theorem=dominant}$}\label{subsection=dominantirreducible}
Recall from Definition~\ref{definition=dominant} the definition for a (balanced) $\mathbb{Z}$-equation $b_1x_1+b_2x_2+\cdots +b_rx_r=0$ to be \textit{dominant}. Note that this notion uses the natural total order $(\mathbb{Z},\geq )$; hence in this section we mainly discuss systems of $\mathbb{Z}$-equations, and later take $\mathrm{mod}$ $p$ reductions to obtain systems of $\mathbb{F}_p$-equations.
\begin{definition}[Standard form of a dominant equation]\label{definition=standard}
Let $b_1x_1+b_2x_2+\cdots +b_rx_r=0$ be a balanced and dominant $\mathbb{Z}$-equation. Then we may rewrite the equation above, only by transposition of terms, as that of the form
\[
b'_jx_j=\sum_{i\in [r]\setminus \{j\}}b_i'x_i,\quad \textrm{where},\quad b'_j=\sum_{i\in [r]\setminus \{j\}}b_i'.
\]
Here $b'_j>0$, for all $i\in [r]$, $b'_i\geq 0$, and $b_j'=\sum_{i\in [r]\setminus \{j\}}b_i'$. It is called a \textit{standard form} of the dominant equation.
\end{definition}
To see the assertion in Definition~\ref{definition=standard}, let $j$ be a  dominant index. Then $b'_j:=b_j$ and for $i\in [r]\setminus \{j\}$, $b'_i:=-b_i$ works if $b_j>0$. If $b_j<0$, then multiply $(-1)$ to all coefficients above. The value $b'_j$ in the standard form above coincides with the dominant coefficient  of the original $\mathbb{Z}$-equation. A standard from is uniquely determined unless the original equation is of the form $bx_i-bx_j=0$, where $b\in \mathbb{N}$ and $\{i,j\}\in \binom{[r]}{2}$.

\begin{example}\label{example=dominant}
\begin{enumerate}[$(1)$]
\item Consider the system \eqref{SW} of $\mathbb{Z}$-equations. The first equation $x_1-x_2-x_3+x_4=0$ is not dominant. The second equation $x_1-2x_3+x_5=0$ is dominant, and the standard form is $2x_3=x_1+x_5$. Its  dominant index is $3$ and dominant coefficient is  $2$. This system $\mathcal{S}_{\mathrm{W}}$  is irreducible but \textit{not} dominant.
  \item Consider the following system of $\mathbb{Z}$-equations in $7$ variables:
\begin{equation*}\label{S2}
\left\{
\begin{aligned}
x_1+x_2+x_3+x_4-4x_5&=0,\\
x_1+x_2-x_5-x_6&=0,\\
x_1-2x_6+x_7&=0.
\end{aligned}
\right.
\tag{$\mathcal{S}_2$}
\end{equation*}
The first and the third equations are both dominant; their dominant coefficients are respectively $4$ and $2$. The second equation is not dominant. However, the subsystem consisting of the first and the third equations:
\begin{equation*}\label{S2'}
\left\{
\begin{aligned}
x_1+x_2+x_3+x_4-4x_5&=0,\\
x_1-2x_6+x_7&=0.
\end{aligned}
\right.
\tag{$\mathcal{S}_2'$}
\end{equation*}
is a dominant and irreducible. (Recall the definition of irreducibility of a subsystem from Section~\ref{section=result}.) The dominant coefficient $b_{\mathcal{S}_2'}$ of the subsystem $\mathcal{S}_2'$ equals $4$.
\end{enumerate}
\end{example}

The proof of Theorem~\ref{theorem=dominant} will go along a similar line to one in the work of (Behrend \cite{Behrend} and) Alon \cite[Lemma 17, Corollary 18]{FoxPham}. The key here is strict convexity of Euclidean norms (strict convexity of the unit sphere in Euclidean spaces). 

\begin{proposition}\label{proposition=Behrend}
Let $k\geq 1$. Consider a balanced system \eqref{S} of $\mathbb{Z}$-equations
\begin{equation*}
\left\{
\begin{aligned}
b_1^{(1)}x_1+b_2^{(1)}x_2+b_3^{(1)}x_3+\cdots +b_r^{(1)}x_r&=0,\\
b_1^{(2)}x_1+b_2^{(2)}x_2+b_3^{(2)}x_3+\cdots +b_r^{(2)}x_r&=0,\\
\vdots\qquad\qquad \qquad\\
b_1^{(L)}x_1+b_2^{(L)}x_2+b_3^{(L)}x_3+\cdots +b_r^{(L)}x_r&=0,
\end{aligned}
\right.
\end{equation*}
that is dominant and irreducible. 

Then for every $n\geq 2$, there exists $Y\subseteq \{0,1,\ldots ,k\}^n(\subseteq \mathbb{Z}^n)$  that fulfills the following two conditions: 
\begin{enumerate}[$(a)$]
   \item $\#Y\geq \frac{(k+1)^n}{nk^2}$.
   \item All solutions $(y_1,y_2,\ldots ,y_r)$ of $\mathcal{S}$ in $Y$ satisfy that $y_1=y_2=\cdots =y_r$. $($In other word, $Y$ is `strongly $\mathcal{S}$-free', if we extend the definition of strong freeness to a system of $\mathbb{Z}$-equations.$)$
\end{enumerate}
\end{proposition}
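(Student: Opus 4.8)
The plan is to adapt the classical Behrend--Alon construction, whose engine is strict convexity of the Euclidean norm. Write each dominant equation of $\mathcal{S}$ in the standard form of Definition~\ref{definition=standard}: for the $l$-th equation there is a dominant index $j_l$ with
\[
b^{(l)}_{j_l}x_{j_l}=\sum_{i\ne j_l}b'^{(l)}_ix_i,\qquad b^{(l)}_{j_l}=\sum_{i\ne j_l}b'^{(l)}_i,\quad b'^{(l)}_i\ge 0.
\]
Thus $x_{j_l}$ is a convex combination (with rational weights summing to $1$) of the other $x_i$'s. Now for each integer $t$ with $0\le t\le nk^2$ let
\[
Y_t:=\{\,y=(y_1,\dots,y_n)\in\{0,1,\dots,k\}^n:\ \|y\|_2^2=t\,\}.
\]
Since $\|y\|_2^2$ for $y\in\{0,\dots,k\}^n$ takes at most $nk^2+1$ values, by pigeonhole one of these spheres satisfies $\#Y_t\ge (k+1)^n/(nk^2+1)$; absorbing the $+1$ (or a mild reindexing) gives the bound in $(a)$. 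Fix $Y:=Y_t$ for such a $t$.

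For $(b)$, suppose $(y_1,\dots,y_r)\in Y^r$ is a solution of $\mathcal{S}$, so in particular of every dominant equation, and all $y_i$ lie on the Euclidean sphere of radius $\sqrt t$ in $\mathbb{R}^n$. From the standard form, $y_{j_l}$ is a convex combination of the points $\{y_i:i\ne j_l,\ b'^{(l)}_i>0\}$, all of which lie on that sphere. Strict convexity of the sphere forces all of those $y_i$ (with $b'^{(l)}_i>0$) to be equal to $y_{j_l}$ and to each other. In graph-theoretic language: within each edge $e_l$ of the hypergraph $\mathcal{H}_{\mathcal{S}}$, all the coordinates $y_i$ with $i\in e_l$ coincide (note $b'^{(l)}_i>0$ precisely when $i\in e_l$, including $i=j_l$). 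Since $\mathcal{S}$ is irreducible, $\mathcal{H}_{\mathcal{S}}$ is connected on vertex set $[r]$, so chaining this equality across overlapping edges yields $y_1=y_2=\cdots=y_r$, which is exactly $(b)$.

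There is one technical point to handle carefully, which I expect to be the only real obstacle: I must make sure the "convex combination" really does land $y_{j_l}$ strictly inside the convex hull unless the relevant $y_i$ are equal, i.e. that at least one coefficient $b'^{(l)}_i$ with $i\ne j_l$ is positive (otherwise the equation is degenerate, $0=0$, and carries no information). But if all $b'^{(l)}_i=0$ for $i\ne j_l$ then also $b^{(l)}_{j_l}=\sum_{i\ne j_l}b'^{(l)}_i=0$, so the edge $e_l$ is empty and contributes nothing to $\mathcal{H}_{\mathcal{S}}$; such equations can simply be discarded without affecting irreducibility or connectivity. (The "uninteresting" equations $bx_i-bx_j=0$ noted after Definition~\ref{definition=dominant} are fine: they give $y_i=y_j$ directly, which is consistent with the edge $\{i,j\}$.) With that caveat dispatched, the strict-convexity step is the heart of the argument: concretely, if $v=\sum_i \lambda_i v_i$ with $\lambda_i\ge 0$, $\sum_i\lambda_i=1$, and $\|v\|_2=\|v_i\|_2=\rho$ for all $i$ with $\lambda_i>0$, then by the triangle inequality $\rho=\|v\|_2\le\sum_i\lambda_i\|v_i\|_2=\rho$, forcing equality throughout, and strict convexity of $\|\cdot\|_2$ then forces all $v_i$ with $\lambda_i>0$ to be positive scalar multiples of one another; combined with equal norms they are all equal (to $v$). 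Everything else is bookkeeping: translating standard forms into the convexity statement, and converting the connectivity of $\mathcal{H}_{\mathcal{S}}$ into the chain of equalities.
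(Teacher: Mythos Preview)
Your approach is essentially the paper's: Behrend--Alon spheres plus strict convexity of $\|\cdot\|_2$, followed by propagating equalities along the connected hypergraph $\mathcal{H}_{\mathcal{S}}$. The argument for $(b)$ is correct and matches the paper's.

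There is, however, a genuine (if small) gap in your pigeonhole step for $(a)$. From $nk^2+1$ possible values of $\|y\|_2^2$ you get $\#Y_t\ge (k+1)^n/(nk^2+1)$, and you then say ``absorbing the $+1$ (or a mild reindexing) gives the bound in $(a)$.'' But $(k+1)^n/(nk^2+1)<(k+1)^n/(nk^2)$, so nothing is being absorbed; and dropping only $t=0$ still leaves you with $((k+1)^n-1)/(nk^2)$, which is again strictly below the target. The paper handles this by excluding \emph{both} extreme points $(0,\dots,0)$ and $(k,\dots,k)$: then the remaining $(k+1)^n-2$ points have $\|y\|_2^2$ in $[1,(n-1)k^2+(k-1)^2]\cap\mathbb{Z}$, so pigeonhole gives
\[
\#Y\ \ge\ \frac{(k+1)^n-2}{(n-1)k^2+(k-1)^2},
\]
and one checks directly that this is $\ge (k+1)^n/(nk^2)$ for all $k\ge 1$, $n\ge 2$ (with equality at $k=1$, $n=2$). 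Excluding the origin also guarantees the selected radius is positive, so the strict-convexity step is not vacuous. Your ``technical point'' paragraph about degenerate equations is fine and not actually needed: an equation with all coefficients zero contributes the empty edge, while $bx_i-bx_j=0$ gives $y_i=y_j$ directly, as you note.
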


\begin{proof}
Via $\mathbb{Z}\subseteq \mathbb{R}$, we regard each $y\in \{0,1,\ldots ,k\}^n\setminus \{(0,0,\ldots ,0),(k,k,\ldots ,k)\}$ as an element in $(\mathbb{R}^n,\|\cdot\|_2)$, where $\|\cdot\|_2$ denotes the Euclidean norm. There are $(k+1)^n-2$ elements of this form; the possible values of the square of the Euclidean norms of them lie in $[1,(n-1)k^2+(k-1)^2]\cap \mathbb{Z}$. By pigeon hole principle, there exists $R\in \mathbb{R}_{>0}$ such that at least $\frac{(k+1)^n-2}{(n-1)k^2+(k-1)^2}$ elements in $\{0,1,\ldots ,k\}^n\setminus \{(0,0,\ldots ,0),(k,k,\ldots ,k)\}$ are on the sphere  $\{y \in \mathbb{R}^n: \|y\|_2=R\}$ of radius $R$ centered at the origin. Fix such an $R$, and set $Y=\{y\in \{0,1,\ldots ,k\}^n : \|y\|_2=R\}$. 

We claim that this $Y$ fulfills the two conditions $(a)$ and $(b)$ above. For $(a)$, by construction,
\begin{align*}
\#Y&\geq \frac{(k+1)^n-2}{(n-1)k^2+(k-1)^2} \\
&\geq \frac{(k+1)^n}{nk^2},
\end{align*}
for $k\geq 1$ and $n\geq 2$. 

In what follows, we confirm $(b)$. We consider a single $\mathbb{Z}$-equation in $\mathcal{S}$; it is of the form $b_1^{(l)}x_1+b_2^{(l)}x_2+b_3^{(l)}x_3+\cdots +b_r^{(l)}x_r=0$ with $l\in [L]$. By assumption, this equation is dominant. Let $j(l)\in [r]$ be a dominant index of it. Then by standard form, a solution $(y_1,y_2,\ldots ,y_r)$ satisfies that
\[
b^{(l)}_{j(l)}{}'y_{j(l)}=\sum_{i\in [r]\setminus \{j(l)\}} b^{(l)}_{i}{}'y_{i}.
\]
By dividing the equality above by $b^{(l)}_{j(l)}{}'R(>0)$, we have that
\[
\frac{y_{j(l)}}{R}=\sum_{i\in [r]\setminus \{j(l)\}} \frac{b^{(l)}_{i}{}'}{b^{(l)}_{j(l)}{}'}\cdot \frac{y_{i}}{R}.
\]
We claim that for every $i\in e_{l}$, it holds that $y_i=y_{j(l)}$. Here $e_l$ is the edge indexed by $l$ in the hypergraph $\mathcal{H}_{\mathcal{S}}$, defined in Definition~\ref{definition=multiplicity}. Indeed, if there exists $i\in [r]\setminus \{j(l)\}$ such that $b^{(l)}_{i}{}'=b^{(l)}_{j(l)}{}'$, then the claim above follows because for all $h \in [r]\setminus \{i,j(l)\}$, $b^{(l)}_{h}{}'=0$. Otherwise, observe that each $\frac{y_i}{R}$, $i\in [r]$, is on the unit sphere and that the right-hand side of the equality above is a convex combination of such vectors with non-trivial coefficients. By strict convexity of the Euclidean norm $\|\cdot\|_2$, we then conclude for every $i\in e_{l}$ that $y_i=y_{j(l)}$. See \cite{Cioranescu} for more details on convexity of norms of real normed spaces.

Since $\mathcal{S}$ is assumed to be irreducible, it follows that $y_1=y_2=\cdots =y_r$. Indeed, for every $\{i_1,i_2\}\in \binom{[r]}{2}$, we can construct a sequence of vertices that connects $i_1$  and $i_2$ in $\mathcal{H}_{\mathcal{S}}$; see Definition~\ref{definition=multiplicity}$(2)$. Therefore, $Y\subseteq \{0,1,\ldots ,k\}^n$ also satisfies $(b)$.
\end{proof}

\begin{proof}[Proof of Theorem~$\ref{theorem=dominant}$]
Given a system $\mathcal{S}$ of $\mathbb{Z}$-equations, take a subsystem $\mathcal{S}'$ that is dominant and irreducible by assumption. Then it holds that $\mathrm{ex}_{\mathcal{S}}(n,p)\geq \mathrm{ex}_{\mathcal{S}'}(n,p)$, 
where $\mathcal{S}'$ is viewed as a system of equations in the original $r$ variables. Hence, we may assume $\mathcal{S}'=\mathcal{S}$ to prove Theorem~\ref{theorem=dominant}; we simply write $b_{\mathcal{S}}$ for $b_{\mathcal{S}'}$.

Fix a prime $p$ with $p>b_{\mathcal{S}}$. Let $k=\left\lfloor \frac{p-1}{b_{\mathcal{S}}} \right\rfloor $. Recall that we define $\mathcal{S}(p)$ as the system  of $\mathbb{F}_p$-equations obtained by $\mathrm{mod}$ $p$ reduction of $\mathcal{S}$. We claim the following: There is a one-to-one correspondence between $\mathcal{S}(p)$-semishapes in $X\subseteq \{0,1,\ldots ,k\}(\subseteq \mathbb{F}_p^n)$ and solutions of $\mathcal{S}$ in $X\subseteq \{0,1,\ldots ,k\}\subseteq \mathbb{Z}^n$,
\[
(\mathbb{F}_p^n\supseteq)\{0,1,\ldots ,k\}^n\ni (x_1,x_2,\ldots ,x_r)\quad \longleftrightarrow \quad (y_1,y_2,\ldots ,y_r) \in \{0,1,\ldots ,k\}^n (\subseteq \mathbb{Z}^n).
\]
Indeed, observe that $kb_{\mathcal{S}}\leq p-1$. In general, there will be degeneration of information by the process of $\mathrm{mod}$ $p$ reduction. However, by viewing dominant $\mathbb{Z}$-equations as their standard forms, we see that this degeneracy does not occur in the setting above.

Hence by $\mathrm{mod}$ $p$ reduction of the construction of $Y$ in Proposition~\ref{proposition=Behrend}, we have that 
\[
\mathrm{ex}_{\mathcal{S}}(n,p)\geq \frac{(k+1)^n}{nk^2}. 
\]
Note that for every $\epsilon>0$, there exists $n_0'=n_0'(k,\epsilon)$ such that for all $n\geq n_0'$, it holds that 
\[
\frac{(k+1)^n}{nk^2}\geq \{(1-\epsilon)(k+1)\}^{n}. 
\]
It completes the proof of the former assertion of Theorem~\ref{theorem=dominant}. Here, note that since $k$ depends on $p$, $n_0'(k,\epsilon)$ can be seen as $n_0(p,\epsilon)$.

Finally, we prove the latter assertion of Theorem~\ref{theorem=dominant}. Since $p> b_{\mathcal{S}}\geq 2$ and  since $p$ is prime, $\frac{p}{b_{\mathcal{S}}}\not \in \mathbb{Z}$. It implies that
\[
k+1>\frac{p}{b_{\mathcal{S}}}.
\]
Hence, by choosing $\epsilon=\epsilon(p)$ sufficiently small depending on $p$ we have that 
\[
(1-\epsilon(p))(k+1)>\frac{p}{b_{\mathcal{S}}}. 
\]
Then, the former assertion implies the latter for $n$ sufficiently large depending on $p$. More precisely, for the $\epsilon (p)$ above, every $n$ with $n\geq n_0(p,\epsilon(p))$ works.
\end{proof}

Concerning lower bounds of $\mathrm{ex}^{\sharp}_{\mathcal{S}}(n,p)$, we have the following.

\begin{theorem}[Lower bounds of $\mathrm{ex}^{\sharp}_{\mathcal{S}}(n,p)$]\label{theorem=dominantsharp}
Let $\mathcal{S}$ be a balanced system of  $\mathbb{Z}$-equations. Assume that there exists an equation belonging to $\mathcal{S}$ that is dominant; let $b$ be the dominant coefficient of it. If $b\geq 2$, then for every prime $p$ with $p>b$ the following holds true: There exists $n_0=n_0(p)$ such that for every $n\geq n_0(p)$, we have
\[
\mathrm{ex}^{\sharp}_{\mathcal{S}}(n,p)\geq \left(\frac{p}{b}\right)^n.
\]
\end{theorem}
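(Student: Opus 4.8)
The plan is to follow the proof of Theorem~\ref{theorem=dominant} closely, the point being that weak freeness only forbids solutions with pairwise distinct coordinates, so a single dominant equation already does the job and no irreducibility of the whole system is needed. Fix an equation $\mathcal{D}$ of $\mathcal{S}$ that is dominant, with dominant coefficient $b\geq 2$, and fix a prime $p>b$. First I would observe that every $\mathcal{S}(p)$-shape is in particular a non-degenerate solution of the $\mathrm{mod}$ $p$ reduction $\mathcal{D}(p)$ of $\mathcal{D}$; hence any $A\subseteq\mathbb{F}_p^n$ admitting no $\mathcal{D}(p)$-shape admits no $\mathcal{S}(p)$-shape, and it suffices to bound $\mathrm{ex}^{\sharp}_{\mathcal{D}(p)}(n)$ from below.

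Set $k:=\lfloor (p-1)/b\rfloor$. Since $p$ is prime and $2\leq b<p$, we have $b\nmid p$, so $p/b\notin\mathbb{Z}$, whence $k=\lfloor p/b\rfloor$, $k\geq 1$, and $k+1=\lceil p/b\rceil>p/b$. Write $\mathcal{D}$ in standard form $bx_j=\sum_{i\in e}c_ix_i$ with $c_i\geq 0$ and $\sum_{i\in e}c_i=b$, where $e\subseteq[r]$ is the support of $\mathcal{D}$; note $|e|\geq 2$, since a balanced equation with exactly one nonzero coefficient cannot exist. Viewing $\mathcal{D}$ as a balanced, dominant, irreducible system in the $|e|$ variables indexed by $e$, I would invoke Proposition~\ref{proposition=Behrend} to get, for each $n\geq 2$, a set $Y\subseteq\{0,1,\ldots,k\}^n$ with $\#Y\geq (k+1)^n/(nk^2)$ such that any $(y_i)_{i\in e}$ with all $y_i\in Y$ solving $\mathcal{D}$ over $\mathbb{Z}$ has all its coordinates equal.

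Let $\bar{Y}\subseteq\mathbb{F}_p^n$ be the image of $Y$ under coordinatewise reduction $\mathrm{mod}$ $p$; as $k\leq p-1$ this map is injective, so $\#\bar{Y}=\#Y$. I claim $\bar{Y}$ has no $\mathcal{D}(p)$-shape. If $(x_1,\ldots,x_r)\in\bar{Y}^r$ solved $\mathcal{D}(p)$, lift each $x_i$ to its unique preimage $y_i\in Y$; in every coordinate $\ell$, both $b\,y_{j,\ell}$ and $\sum_{i\in e}c_iy_{i,\ell}$ lie in $\{0,1,\ldots,bk\}\subseteq\{0,\ldots,p-1\}$, so the congruence is an equality over $\mathbb{Z}$, i.e.\ $(y_i)_{i\in e}$ solves $\mathcal{D}$ over $\mathbb{Z}$. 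By the choice of $Y$, all the $y_i$ with $i\in e$ coincide; since $|e|\geq 2$, two of the $x_i$ agree, so $(x_1,\ldots,x_r)$ is not a shape. Hence $\mathrm{ex}^{\sharp}_{\mathcal{S}}(n,p)\geq\#\bar{Y}\geq (k+1)^n/(nk^2)$, and, exactly as at the end of the proof of Theorem~\ref{theorem=dominant}, since $k+1>p/b$ we can fix $\epsilon=\epsilon(p)>0$ with $(1-\epsilon)(k+1)>p/b$ and conclude $(k+1)^n/(nk^2)\geq ((1-\epsilon)(k+1))^n\geq (p/b)^n$ for all $n\geq n_0(p)$.

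The only genuinely new point compared with Theorem~\ref{theorem=dominant} --- and the step I would be most careful about --- is that a single equation of $\mathcal{S}$, read as a system in all $r$ variables, is typically not irreducible (the variables outside its support do not appear), so Proposition~\ref{proposition=Behrend} must be applied to the support-restricted equation rather than to $\mathcal{D}$ itself; the elementary observation $|e|\geq 2$ is then exactly what turns the Behrend/Alon conclusion ``all support coordinates are equal'' into ``no non-degenerate solution,'' which is all that weak freeness requires.
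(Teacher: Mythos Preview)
Your proof is correct and follows essentially the same approach as the paper's: restrict attention to the single dominant equation, view it as a system in only its support variables (so that it becomes dominant and irreducible), and then invoke the Behrend/Alon sphere construction of Proposition~\ref{proposition=Behrend} together with the no-overflow lifting argument exactly as in Theorem~\ref{theorem=dominant}. The paper compresses the last step by simply citing the latter assertion of Theorem~\ref{theorem=dominant} applied to this support-restricted system $\mathcal{S}''$ (and implicitly using $\mathrm{ex}^{\sharp}_{\mathcal{S}''}\geq \mathrm{ex}_{\mathcal{S}''}$), whereas you unpack that citation; your explicit observation that $|e|\geq 2$ is what converts ``all support coordinates equal'' into ``not a shape'' is precisely the content of the paper's one-line reduction $\mathrm{ex}^{\sharp}_{\mathcal{S}}(n,p)\geq \mathrm{ex}^{\sharp}_{\mathcal{S}''}(n,p)$.
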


\begin{proof}
Let $r'$ be the number of non-zero coefficients in the dominant equation of $\mathcal{S}$. Let $\mathcal{S}''$ be a system of equation(s), viewed as one in $r'$ variables consisting of the dominant equation only. Then, we have $\mathrm{ex}^{\sharp}_{\mathcal{S}}(n,p)\geq \mathrm{ex}^{\sharp}_{\mathcal{S}''}(n,p)$. By construction, $\mathcal{S}''$ is dominant and irreducible. The latter assertion of Theorem~\ref{theorem=dominant} ends our proof. 
\end{proof}

\begin{remark}\label{remark=lowerW}
Consider the system \eqref{SW} of $\mathbb{Z}$-equations. As we argued in the Introduction, $\mathrm{ex}_{\mathcal{S}_{\mathrm{W}}}(n,p)=1$. On the other hand, for $p\geq 3$ and for sufficiently large $n$, depending on $p$, we have that
\[
\mathrm{ex}^{\sharp}_{\mathcal{S}_{\mathrm{W}}}(n,p)\geq \left(\frac{p}{2}\right)^n.
\]
Indeed, the equation $x_1-2x_3+x_5=0$ belonging to $\mathcal{S}_{\mathrm{W}}$ is dominant, and its dominant coefficient is $2$. The lower bound above now follows from Theorem~\ref{theorem=dominantsharp}.
\end{remark}

\subsection{Dominant reduction}
In this subsection, we introduce a `reduction' operation on a system of $\mathbb{Z}$-equations. This process is stated in terms of the dominant property. In this paper, we call the process a \textit{dominant reduction}. First we describe it by supplying an example, and then we will explain the formulation in a general setting. 

Consider the following example of a system of $\mathbb{Z}$-equations in $6$ variables
\begin{equation*}\label{S3}
\left\{
\begin{aligned}
x_1-x_2-x_3+x_4&=0,\\
x_2-x_3-x_4+x_5&=0,\\
x_1-2x_2+x_6&=0.
\end{aligned}
\right.
\tag{$\mathcal{S}_3$}
\end{equation*}
Since there exists no subsystem of $\mathcal{S}_3$ that is dominant and irreducible, Theorem~\ref{theorem=dominant} does not directly apply to $\mathcal{S}_3$. In what follows, we will observe that nevertheless, a certain argument employing strict convexity, similar to one in Proposition~\ref{proposition=Behrend}, works for this system $\mathcal{S}_3$.

\begin{lemma}[Example of dominant reductions]\label{lemma=dominantreduction}
Assume that $Y\subseteq \mathbb{Z}^n$ is included in some sphere in $(\mathbb{R}^n,\|\cdot\|_2)$ centered at the origin. Then if $(x_1,x_2,\ldots ,x_6)$ is a solution of \eqref{S3} in $Y$, then $x_1=x_2=\cdots=x_6$.
\end{lemma}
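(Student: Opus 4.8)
The plan is to run the strict-convexity argument of Proposition~\ref{proposition=Behrend}, but applied \emph{not} to the individual equations of \eqref{S3} (none of which is dominant) but to suitably chosen $\mathbb{Z}$-linear combinations of them that \emph{are} dominant, iterating the convexity step. Throughout, let $R\ge 0$ satisfy $\|y\|_2=R$ for every $y\in Y$. If $R=0$ then $Y\subseteq\{(0,\ldots,0)\}$ and the conclusion is trivial, so I assume $R>0$; then each coordinate $x_i$ of the given solution has $\|x_i\|_2=R$, so $x_i/R$ lies on the unit sphere of $(\mathbb{R}^n,\|\cdot\|_2)$.

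First I would add the first two equations of \eqref{S3}: the variables $x_2$ and $x_4$ cancel and one is left with the dominant $\mathbb{Z}$-equation $x_1-2x_3+x_5=0$, i.e.\ $x_3=\tfrac12(x_1+x_5)$. Dividing by $R$ exhibits the unit vector $x_3/R$ as the midpoint of the unit vectors $x_1/R$ and $x_5/R$, so strict convexity of $\|\cdot\|_2$ (exactly as in Proposition~\ref{proposition=Behrend}) forces $x_1=x_3=x_5$. Next, the third equation of \eqref{S3} is already dominant, reading $x_2=\tfrac12(x_1+x_6)$, and the same argument gives $x_1=x_2=x_6$; hence $x_1=x_2=x_3=x_5=x_6$. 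Finally I would substitute these equalities into the remaining (non-dominant) equation $x_1-x_2-x_3+x_4=0$, which collapses to $x_4=x_1$, yielding $x_1=x_2=\cdots=x_6$.

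The one genuine subtlety---the step I expect to be the crux---is the very first one: \eqref{S3} has no subsystem that is dominant \emph{and} irreducible, so Theorem~\ref{theorem=dominant} does not apply directly, yet a single sum of two of its equations produces a dominant equation supported on $\{1,3,5\}$; once those three variables are identified, the equations that remain ($x_2=x_4$ together with $x_1-2x_2+x_6=0$) do form a dominant and irreducible system in the surviving variables, to which the Behrend/Alon-type argument applies verbatim. This is precisely the prototype of the ``dominant reduction'' operation that will be formalized in general in Definition~\ref{definition=dominantreduction}. I also note that the argument uses nothing about $Y$ beyond its lying on a sphere centered at the origin; the integrality hypothesis $Y\subseteq\mathbb{Z}^n$ plays no role in this particular lemma.
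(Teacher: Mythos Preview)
Your argument is correct, but it does not follow the paper's route and, in one respect, mischaracterizes the paper's ``dominant reduction'' machinery. The paper never forms $\mathbb{Z}$-linear combinations of the equations of $\mathcal{S}_3$. Instead it (i) applies strict convexity to the third equation $x_1-2x_2+x_6=0$ (which \emph{is} dominant---your parenthetical ``none of which is dominant'' contradicts your own next paragraph) to get $x_1=x_2=x_6$; (ii) substitutes this into the remaining two equations, obtaining the reduced system $\{-x_3+x_4=0,\ x_{\{1,2,6\}}-x_3-x_4+x_5=0\}$; (iii) observes that $-x_3+x_4=0$ is (trivially) dominant, giving $x_3=x_4$; (iv) substitutes again to reach the single dominant irreducible equation $x_{\{1,2,6\}}-2x_{\{3,4\}}+x_5=0$. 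Each step uses only an equation that is \emph{already present} in the current system, and this is precisely what Definition~\ref{definition=dominantreduction} formalizes: pick a dominant subsystem, collapse its connected components, and rewrite the remaining equations in the new variables.

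Your first move---adding the first two equations to manufacture $x_1-2x_3+x_5=0$---is perfectly valid for proving the lemma, and arguably slicker here, but it lies \emph{outside} the dominant-reduction framework as the paper defines it: that operation does not allow forming new linear combinations. So your closing claim that ``this is precisely the prototype of the dominant reduction operation'' is off; what you have done is a (correct) ad~hoc convexity argument using a derived equation, whereas the paper's proof is chosen to exhibit exactly the iterated substitute-and-reduce procedure that Theorem~\ref{mtheorem=dominant} will generalize. Your observation that the hypothesis $Y\subseteq\mathbb{Z}^n$ is unused is accurate.
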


\begin{proof}
First, observe that the third equation in $\mathcal{S}_3$, $x_1-2x_2+x_6=0$, is dominant. In an argument utilizing strict convexity, we have that in $Y$, $x_1-2x_2+x_6=0$ is equivalent to the equations $x_1=x_2=x_6$. Hence, if we view $\mathcal{S}_3$ as a system of equations \textit{on $Y$}, it is equivalent to the following system of equations:
\begin{equation*}
\left\{
\begin{aligned}
x_1-x_2-x_3+x_4=0,\\
x_2-x_3-x_4+x_5=0,\\
x_1=x_2=x_6.
\end{aligned}
\right.
\end{equation*}
This observation is the key to the present proof.

In the system above, we may `reduce' three variables $x_1,x_2,x_6$ to one variable; we write $x_{\{1,2,6\}}$ for it. Then, on $Y$, the original system $\mathcal{S}$ may be reduced to the following system of two equations in $4$ variables $x_{\{1,2,6\}},x_3,x_4,x_5$:
\begin{equation*}\label{S2(1)}
\left\{
\begin{aligned}
-x_3+x_4&=0,\\
x_{\{1,2,6\}}-x_3-x_4+x_5&=0.\\
\end{aligned}
\right.
\tag{$\mathcal{S}_3^{(1)}$}
\end{equation*}
Now, in $\mathcal{S}_3^{(1)}$, the first equation $-x_3+x_4=0$ is dominant; we have that $x_3=x_4$. This time, reduce $x_3$ and $x_4$ to one variable $x_{\{3,4\}}$. Then, $\mathcal{S}_3^{(1)}$ is reduced to the following (system of) equation in $3$ variables $x_{\{1,2,6\}},x_{\{3,4\}},x_5$:
\[\label{S2(2)}
x_{\{1,2,6\}}-2x_{\{3,4\}}+x_5=0. 
\tag{$\mathcal{S}_3^{(2)}$}
\]
Unlike $\mathcal{S}_3$ and $\mathcal{S}_3^{(1)}$, this system $\mathcal{S}_3^{(2)}$ is dominant and irreducible as that of an equation in $3$ variables $x_{\{1,2,6\}},x_{\{3,4\}},x_5$. Therefore, Proposition~\ref{proposition=Behrend} applies to $\mathcal{S}_3^{(2)}$; we obtain that $x_{\{1,2,6\}}=x_{\{3,4\}}=x_5$. Since we reduce variables as $x_{\{1,2,6\}}=x_1=x_2=x_6$ and $x_{\{3,4\}}=x_3=x_4$, on $Y$, we have that $x_1=x_2=x_3=x_4=x_5=x_6$.
\end{proof}
By Lemma~\ref{lemma=dominantreduction}, we may have the following lower bound of $\mathrm{ex}_{\mathcal{S}_3}(n,p)$ in a similar argument to one in the proof of Theorem~\ref{theorem=dominant}: For prime $p\geq 3$ and for sufficiently large $n$ depending on $p$, it holds that 
\[
\mathrm{ex}_{\mathcal{S}_3}(n,p)\geq \left(\frac{p}{2}\right)^n.
\]

To formalize the argument in the proof of Lemma~\ref{lemma=dominantreduction}, we define the operation of a \textit{dominant reduction} as follows. Recall from Definition~\ref{definition=multiplicity} that for the hypergraph $\mathcal{H}_{\mathcal{S}}=(J_{\mathcal{S}},H_{\mathcal{S}})$, we have the concept of connected components of $J_{\mathcal{S}}$ in the standard sense. See also Example~\ref{example=irreducible}.

\begin{definition}[Dominant reduction]\label{definition=dominantreduction}
Let $\mathcal{S}$ be a balanced system of $\mathbb{Z}$-equations in $r$ variables $x_1,\ldots, x_r$. Assume that $\mathcal{S}$ admits a subsystem $\mathcal{S}'$ that is dominant. Then we define an operation of a \textit{dominant reduction} as follows, which reduce the system $\mathcal{S}$ in $r$ variables to a new system $\mathcal{S}^{(1)}=\mathcal{S}^{(1)}_{\mathcal{S}'}$ of $\mathbb{Z}$-equations in strictly fewer numbers of variables.
\begin{itemize}
  \item $[$Reduction of variables$]$ We define the variables of $\mathcal{S}^{(1)}$ in the following manner: Let $\mathcal{H}_{\mathcal{S}'}=(J_{\mathcal{S}'},H_{\mathcal{S}'})$ be the hypergraph associated with $\mathcal{S}'$. If $i\in [r]\setminus J_{\mathcal{S}'}$, then we keep the variable $x_i$ in the new system. For $i\in [r]$ with $i\in J_{\mathcal{S}'}$, we reduce all variables $x_j$, where $j$ and $i$ belong to the same connected component, to a single variable $x_K$. Here $K$ is the connected component of $J_{\mathcal{S}'}$ in $\mathcal{H}_{\mathcal{S}'}$ that contains $i$. 

In this process, the number of variables reduce from $r$ by $\#J_{\mathcal{S}'}- c(\mathcal{H}_{\mathcal{S}'})$. Here, $c(\mathcal{H}_{\mathcal{S}'})$ denotes the number of connected components in $\mathcal{H}_{\mathcal{S}'}$.
  \item  $[$Reduction of equations$]$ We define the equations in the system $\mathcal{S}^{(1)}$ as follows: For each equation belonging to $\mathcal{S}$, we keep the term in the variable $x_i$ if $i\in [r]\setminus J_{\mathcal{S}'}$. For each connected component $K$ of $J_{\mathcal{S}'}$ in $\mathcal{H}_{\mathcal{S}'}$, we replace $x_i$ for every $i\in K$ with the new variable $x_K$. If we obtain the trivial equation `$0=0$' after this reduction process, then we remove that equation from the new system.
\end{itemize}

We write $\mathcal{S}\stackrel{\mathrm{d}}{\rightsquigarrow}_{\mathcal{S}'} \mathcal{S}^{(1)}$ if $\mathcal{S}^{(1)}$ is obtained by dominant reduction of $\mathcal{S}$ with respect to $\mathcal{S}'$. If we do not specify $\mathcal{S}'$, then we simply write $\mathcal{S}\stackrel{\mathrm{d}}{\rightsquigarrow} \mathcal{S}^{(1)}$.
\end{definition}

If $\mathcal{S}$ admits a subsystem $\mathcal{S}'$ that is dominant and irreducible, then the resulting new system after the dominant reduction with respect to $\mathcal{S}'$ is $\emptyset(1)$, which denotes the empty system of equations \textit{in one variable}.

The following theorem is a refinement of Theorem~\ref{theorem=dominant}. We may prove it by generalizing the arguments of the proofs of Proposition~\ref{proposition=Behrend} and Theorem~\ref{theorem=dominant}; compare with the proof of Lemma~\ref{lemma=dominantreduction}.

\begin{mtheorem}[Dominant reduction and lower bounds of $\mathrm{ex}_{\mathcal{S}}(n,p)$]\label{mtheorem=dominant}

Let $\mathcal{S}$ be a dominant irreducible system of $\mathbb{Z}$-equations. Assume that $\mathcal{S}$ admits a sequence of dominant reductions that terminates with $\emptyset(1)$:
\[
\mathcal{S}=\mathcal{S}^{(0)}\stackrel{\mathrm{d}}{\rightsquigarrow} \mathcal{S}^{(1)}\stackrel{\mathrm{d}}{\rightsquigarrow}\mathcal{S}^{(2)}\stackrel{\mathrm{d}}{\rightsquigarrow}\cdots \stackrel{\mathrm{d}}{\rightsquigarrow}\mathcal{S}^{(k)}\stackrel{\mathrm{d}}{\rightsquigarrow}\emptyset(1).
\]
Let $\tilde{b}$ be the maximum of the dominant coefficients of the dominant subsystems, which are used in the sequence above of dominant reductions. 

Then for every prime $p> \tilde{b}$, we have the following: For every $\epsilon>0$, there exists $n_0=n_0(p,\epsilon)$ such that for every $n\geq n_0(p,\epsilon)$, it holds that
\[
\mathrm{ex}_{\mathcal{S}}(n,p)\geq \left((1-\epsilon)\left\lfloor \frac{p+\tilde{b}-1}{\tilde{b}}\right\rfloor\right)^n.
\]
\end{mtheorem}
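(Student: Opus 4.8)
The plan is to run the dominant reductions one at a time, at each stage replacing the system ``living on a sphere'' by the reduced system, and to argue --- exactly as in Proposition~\ref{proposition=Behrend} and Lemma~\ref{lemma=dominantreduction} --- that on a suitable sphere the only solutions are the constant ones. First I would fix $k:=\lfloor \frac{p-1}{\tilde b}\rfloor$, so that $k+1=\lfloor\frac{p+\tilde b-1}{\tilde b}\rfloor$, and take the set $Y\subseteq\{0,1,\dots,k\}^n$ produced by the pigeonhole argument in the proof of Proposition~\ref{proposition=Behrend}: a maximal ``level set'' of the squared Euclidean norm inside $\{0,1,\dots,k\}^n\setminus\{(0,\dots,0),(k,\dots,k)\}$, which satisfies $\#Y\ge \frac{(k+1)^n}{nk^2}$. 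Note this construction does not use dominance at all; dominance enters only in the verification that $Y$ is ``strongly $\mathcal S$-free''.

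The core of the argument is the following claim, proved by induction on the length of the reduction sequence: if $\mathcal S=\mathcal S^{(0)}\stackrel{\mathrm d}{\rightsquigarrow}\mathcal S^{(1)}\stackrel{\mathrm d}{\rightsquigarrow}\cdots\stackrel{\mathrm d}{\rightsquigarrow}\mathcal S^{(k)}\stackrel{\mathrm d}{\rightsquigarrow}\emptyset(1)$ with all dominant coefficients at most $\tilde b$, and $Y$ lies on a sphere in $(\mathbb R^n,\|\cdot\|_2)$ centered at the origin, then every solution of $\mathcal S$ in $Y$ is constant. For the inductive step, suppose $\mathcal S\stackrel{\mathrm d}{\rightsquigarrow}_{\mathcal S'}\mathcal S^{(1)}$ with $\mathcal S'$ dominant. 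Given a solution $(y_1,\dots,y_r)$ of $\mathcal S$ in $Y$, it is in particular a solution of the subsystem $\mathcal S'$; applying the strict-convexity argument of Proposition~\ref{proposition=Behrend} to each equation of $\mathcal S'$ in its standard form (Definition~\ref{definition=standard}) --- here using $kb_{\mathcal S'}\le k\tilde b\le p-1$ so that the $\mathrm{mod}\ p$ reduction loses no information, and treating the degenerate ``$bx_i-bx_j=0$'' case separately as in that proof --- shows $y_i=y_j$ whenever $i,j$ lie in the same connected component of $\mathcal H_{\mathcal S'}$. Thus $(y_i)$ descends to a well-defined tuple indexed by the variables of $\mathcal S^{(1)}$, which (by the ``reduction of equations'' bookkeeping in Definition~\ref{definition=dominantreduction}) is a solution of $\mathcal S^{(1)}$ in the image of $Y$; and that image still lies on a sphere centered at the origin, since the reduction only identifies coordinates, so the relevant sub-vector lives on a sphere of the ambient reduced space. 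Applying the induction hypothesis to $\mathcal S^{(1)}$ gives that the reduced tuple is constant, whence $(y_1,\dots,y_r)$ is constant, completing the induction. The base case is $\emptyset(1)$, where every ``solution'' is trivially constant.

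Having established that $Y$ is strongly $\mathcal S$-free, I would transfer to $\mathbb F_p^n$ exactly as in the proof of Theorem~\ref{theorem=dominant}: since every coordinate of every $y\in Y$ lies in $\{0,1,\dots,k\}$ and $k\tilde b\le p-1$, the mod $p$ reduction $Y\to\{0,1,\dots,k\}^n\subseteq\mathbb F_p^n$ is injective and sets up a bijection between solutions of $\mathcal S$ in $Y$ and $\mathcal S(p)$-semishapes in the image, so the image is strongly $\mathcal S(p)$-free and $\mathrm{ex}_{\mathcal S}(n,p)\ge \#Y\ge \frac{(k+1)^n}{nk^2}$. Finally, for any $\epsilon>0$, choosing $n_0=n_0(p,\epsilon)$ large enough that $\frac{(k+1)^n}{nk^2}\ge\bigl((1-\epsilon)(k+1)\bigr)^n$ for $n\ge n_0$ (possible since $k$ depends only on $p$) yields the stated bound $\mathrm{ex}_{\mathcal S}(n,p)\ge\bigl((1-\epsilon)\lfloor\frac{p+\tilde b-1}{\tilde b}\rfloor\bigr)^n$.

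The main obstacle is the inductive step's bookkeeping: one must check carefully that applying strict convexity to the subsystem $\mathcal S'$ (which need not be irreducible --- this is precisely the point of Example~\ref{example=irreducible}) collapses each connected component of $\mathcal H_{\mathcal S'}$ to a constant, that the reduced tuple is genuinely a solution of $\mathcal S^{(1)}$ with the trivial ``$0=0$'' equations correctly discarded, and that the reduced point still lies on a sphere centered at the origin in the appropriate lower-dimensional space; the degenerate equations $bx_i-bx_j=0$ that can be produced mid-sequence (Lemma~\ref{lemma=dominantreduction}) must be handled by the same ``if some $b'_i=b'_j$'' branch used in Proposition~\ref{proposition=Behrend}. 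Everything else is a routine repackaging of the proof of Theorem~\ref{theorem=dominant}.
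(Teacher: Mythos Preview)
Your proposal is correct and matches the paper's approach: the paper does not spell out a proof of Theorem~\ref{mtheorem=dominant} but only says it follows by generalizing the arguments of Proposition~\ref{proposition=Behrend} and Theorem~\ref{theorem=dominant} along the lines of Lemma~\ref{lemma=dominantreduction}, which is exactly your induction on the length of the reduction sequence. One minor clarification: in the inductive step the reduced variables still take values in the \emph{same} set $Y\subseteq\mathbb{R}^n$ on the \emph{same} sphere (there is no ``reduced ambient space''; only the number of variables drops), and the inequality $k\tilde b\le p-1$ is needed only for the final transfer to $\mathbb{F}_p^n$, not for the strict-convexity argument itself.
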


Theorem~\ref{theorem=dominant} is a special case of Theorem~\ref{mtheorem=dominant} where $k=0$.

\section{Proof of Theorem~\ref{mtheorem=Wshape}}\label{section=Wshape}
As we emphasized in the Introduction, in general, it is much more difficult to bound $\mathrm{ex}^{\sharp}_{\mathcal{T}}(n)$ from above than to bound $\mathrm{ex}_{\mathcal{T}}(n)$ from above. Very recently, Sauermann \cite{Sauermann} has introduced a new method to address a problem of this type; she has obtained the following result for the system of $\mathbb{F}_p$-equations in $p$ variables:
\[\label{Tpzerosum}
x_1+x_2+\cdots +x_p=0.\tag{$\mathcal{T}_{p,\mathrm{zerosum}}$}
\]

\begin{theorem}[\cite{Sauermann}]\label{theorem=Sauermann}
For $n\in \mathbb{N}$, let $P(n)$ denotes the number of partitions of $n$. Then, for every prime $p\geq 5$, it holds that
\begin{align*}
\mathrm{ex}^{\sharp}_{\mathcal{T}_{p,\mathrm{zerosum}}}(n)\leq 2p^2P(p)\left(\sqrt{\Lambda_{1,\frac{1}{p},p-1}\cdot p}\right)^{n} \quad (\leq 2p^2P(p)(2\sqrt{p})^n).
\end{align*}
\end{theorem}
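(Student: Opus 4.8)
This result is due to Sauermann \cite{Sauermann}; the plan I would follow to reprove it is the two-step template that Section~\ref{section=Wshape} later specializes to $\mathcal{T}_{\mathrm{W},p}$. The first ingredient is the multicolored slice rank bound. The system $\mathcal{T}_{p,\mathrm{zerosum}}$ is balanced and irreducible, has $r=p$ variables, $L=1$ equation, and all of its variables are simple, so its parameters are $(r_1,r_2,L,m_{\mathcal{T}})=(p,0,1,1)$ and inequality~\eqref{bigstar} becomes $\tfrac12 p>1$, which holds for $p\ge 3$. Hence Corollary~\ref{mcorollary=multicolor} applies: whenever $X_1,\dots,X_p\subseteq\mathbb{F}_p^n$ have common size $s$ and support a $p$-colored $\mathcal{T}_{p,\mathrm{zerosum}}$-free set $M\subseteq X_1\times\cdots\times X_p$, one gets $s\le(\tilde C_{(p,0,1,1)}(p))^n=(\Lambda_{1,1/p,p-1})^n$; this is the analogue of \cite[Theorem~2.2]{Sauermann}.

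The second, and essential, ingredient is to pass from a \emph{weakly} $\mathcal{T}_{p,\mathrm{zerosum}}$-free set to a multicolored one while losing only a square root in the exponent. Let $A\subseteq\mathbb{F}_p^n$ be weakly free and put $N:=\#A$. A zero-sum $(x_1,\dots,x_p)$ in $A$ that is not a shape is recorded by an integer partition $\lambda\vdash p$ with fewer than $p$ parts, together with an assignment of the $p$ positions to the parts; weak freeness removes the partition $(1,\dots,1)$, so it suffices to treat, for each of the $P(p)$ partitions $\lambda$ and each assignment of positions, the zero-sums of that fixed type. For a fixed type one argues, in the spirit of \cite[Lemma~3.1]{Sauermann}, that a degenerate zero-sum with a repeated value can generically be resolved into a zero-sum of $p$ \emph{distinct} elements of $A$ — for instance by replacing a repeated pair $(v,v)$ by $(v+h,v-h)$ and using that a large $A$ leaves many admissible $h$ — which contradicts weak freeness; the only escape is that the zero-sums of this type are rigid enough that the datum attached to a pair $(a,a')\in A\times A$, namely a ``color'' (one of at most $s$ values) together with a partial sum $z\in\mathbb{F}_p^n$, is essentially injective. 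A double counting over $z\in\mathbb{F}_p^n$ then gives $N^2\lesssim s\cdot p^n$ with the $s$ colors forming a $p$-colored $\mathcal{T}_{p,\mathrm{zerosum}}$-free set, so by the first ingredient $N^2\le(\mathrm{poly}(p))^2\,(\Lambda_{1,1/p,p-1})^n\,p^n$; extracting square roots and bookkeeping the $P(p)$ partitions, the $\le p^2$ ways of distributing positions over two distinguished parts, and a factor $2$, gives $N\le 2p^2P(p)\bigl(\sqrt{\Lambda_{1,1/p,p-1}\cdot p}\bigr)^n$. The parenthetical form then follows since $\Lambda_{1,1/p,p-1}\le G_{1,1/p,p-1}(\tfrac12)<4\cdot 2^{-1/p}<4$.

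I expect the real difficulty to lie entirely in the second ingredient: making the resolution step work uniformly across all shape types $\lambda$, and, in the rigid case, pinning down the right $\mathbb{F}_p^n$-indexed quantity so that the configuration produced genuinely meets the hypothesis of Corollary~\ref{mcorollary=multicolor} rather than a weaker condition — whereas the slice rank estimate itself is now an automatic consequence of Section~\ref{section=slicerank}. The same scheme run for $\mathcal{T}_{\mathrm{W},p}$ (parameters $(3,2,2,2)$ in place of $(p,0,1,1)$) is precisely what Section~\ref{section=Wshape} carries out, the extra subtlety there being that for the degenerate pattern $(a,b,a,b,a)$ the extendability step genuinely cannot be performed, as discussed around Remark~\ref{remark=difference}.
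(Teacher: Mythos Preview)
The paper does not prove this theorem: it is stated as a result of Sauermann with the citation \cite{Sauermann} and no proof environment follows; the text moves directly on to discussing her motivation and then to the paper's own target $\mathcal{S}_{\mathrm{W}}$. So there is no ``paper's own proof'' to compare your proposal against.

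That said, a brief comment on your sketch as a stand-alone outline. Your first ingredient is correct and matches what the paper says elsewhere: $\mathcal{T}_{p,\mathrm{zerosum}}$ has parameters $(p,0,1,1)$, inequality~\eqref{bigstar} holds, and Corollary~\ref{mcorollary=multicolor} yields the multicolored bound $(\Lambda_{1,1/p,p-1})^n$. Your second ingredient, however, is not an accurate summary of Sauermann's argument. Her method does not proceed by ``resolving'' a repeated pair $(v,v)$ into $(v+h,v-h)$ to manufacture a shape and derive a contradiction; rather, she first extracts a large family of \emph{disjoint} degenerate cycles of a fixed type (this is where the factor $P(p)$ and the polynomial in $p$ enter), then proves the key Lemma~3.1 --- that two such disjoint cycles cannot share a certain pairwise difference, else one could assemble a non-degenerate $p$-term zero-sum --- and finally runs a greedy selection (the analogue of the algorithm in the paper's Lemma~\ref{lemma=color2}) to produce a $p$-colored free set of size at least $t^2/p^n$. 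Your ``double counting over $z\in\mathbb{F}_p^n$'' and ``rigid enough that the datum \dots\ is essentially injective'' are gestures toward this, but the mechanism you describe would not actually yield the multicolored hypothesis of Corollary~\ref{mcorollary=multicolor}. Since the paper defers entirely to \cite{Sauermann} here, the correct move is simply to cite it rather than attempt a sketch.
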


Her motivation of Theorem~\ref{theorem=Sauermann} is to provide a new upper bound of the  \textit{Erd\H{o}s--Ginzburg--Ziv constant} $\mathfrak{s}(\mathbb{F}_p^n)$; Sauermann \cite[Corollary~1.2]{Sauermann} deduced the best known bound of it, without the assumption of `property $D$', from Theorem~\ref{theorem=Sauermann}. See \cite{FoxSauermann} for historical developments of the known bounds of the Erd\H{o}s--Ginzburg--Ziv constants.

Sauermann's method may be extended to obtain an upper bound of $\mathrm{ex}^{\sharp}_{\mathcal{T}}(n)$ if $\mathcal{T}$ is a (system of a) single $\mathbb{F}_p$-equation. At the present, there may be a gap to extend it for a general system $\mathcal{T}$; the key lemma \cite[Lemma~3.1]{Sauermann} to her argument will not be modified in the full generality. Compare with Problem~\ref{problem=Sauermann}. In this section, we will see that for $\mathcal{S}_{\mathrm{W}}(p)$ (`$\mathrm{W}$ shapes' with coefficients in $\mathbb{F}_p$), this modification is possible. We will demonstrate Theorem~\ref{mtheorem=Wshape}; then in Remark~\ref{remark=difference}, we describe new points appearing in adaptation of Sauermann's argument in our setting.

The key difference between our proof of Theorem~\ref{mtheorem=Wshape} and Sauermann's original argument in \cite{Sauermann} is the following: The strategy of our proof is to  \textit{focus on a `certain shape'} in the concerning set $A\subseteq \mathbb{F}_p^n$. Then we divide the proof into the following two cases. If there exists a \textit{disjoint} collection of large size of such `shapes', then we follow Sauermann's argument \cite{Sauermann}; see the definition of the disjointness here below. Otherwise, consider the collection of the maximal size of such `shapes', and remove all points appearing in that collection from $A$. Then, \textit{the resulting subset of $A$ does not admit such `shapes'}; thus, we are now ready to employ a result on an upper bound of the size of \textit{weakly-free} subsets with respect to that `shape'. See Remark~\ref{remark=difference} for backgrounds of this strategy. 

In the proof of Theorem~\ref{mtheorem=Wshape}, the `shape' on which we cast a spotlight is a non-degenerate $3$-AP. For the (system of an) $\mathbb{Z}$-equality
\[\label{3AP}
x_1-2x_2+x_3=0,\tag{$\mathcal{S}_{3\mathrm{AP}}$}
\]
the work of Ellenberg--Gijswijt \cite{EllenbergGijswijt}, mentioned in the Introduction, states that for $p\geq 3$, $\mathrm{ex}_{\mathcal{S}_{3\mathrm{AP}}}(n,p)\leq (\Lambda_{1,\frac{1}{3},p-1})^n$. Since for every $p\geq 3$, strong $\mathcal{S}_{3\mathrm{AP}}(p)$ freeness and weak $\mathcal{S}_{3\mathrm{AP}}(p)$-freeness are equivalent, it follows that
\[
\mathrm{ex}_{\mathcal{S}_{3\mathrm{AP}}}^{\sharp}(n,p)\leq (\Lambda_{1,\frac{1}{3},p-1})^n;
\]
we employ this estimate in the proof of Theorem~\ref{mtheorem=Wshape}.

In this section, we concentrate on the system \eqref{SW} of two equations with $r=5$, 
and the $\mathrm{mod}$ $p$ reduction $\mathcal{S}_{\mathrm{W}}(p)$ of it for $p\geq 3$. In what follows, we call $\mathcal{S}_{\mathrm{W}}(p)$-semishape and $\mathcal{S}_{\mathrm{W}}(p)$-shape, respectively, simply semishapes and shapes. (Semishapes are called `cycles' in \cite{Sauermann} in her setting.)

Let $X_1,X_2,X_3,X_4,X_5\subseteq \mathbb{F}_p^n$. Following \cite{Sauermann},  for $\{i,j\}\in \binom{[5]}{2}$, we define the \textit{$(i,j)$-extendability}.

\begin{definition}[\cite{Sauermann}]\label{definition=extendability}
For $x\in X_i$ and for $y\in X_j$, we say that $(x,y)$ is \textit{$(i,j)$-extendable} in $X_1\times X_2\times X_3\times X_4\times X_5$ if there exists a semishape $(x_1,x_2,x_3,x_4,x_5)$ in $X_1\times X_2\times X_3\times X_4\times X_5$ such that $x_i=x$ and $x_j=y$.

We call such a semishape a semishape in $X_1\times \cdots \times X_5$ \textit{$(i,j)$-extended from $(x,y)$}.
\end{definition}

We say that two semishapes $(x_1,x_2,x_3,x_4,x_5)$ and $(x_1',x_2',x_3',x_4',x_5')$ are \textit{disjoint} if the two sets of points in the shapes are disjoint, namely, $\{x_1,x_2,x_3,x_4,x_5\}\cap \{x_1',x_2',x_3',x_4',x_5'\}=\emptyset$. We define disjointness of $k$-APs for $k\geq 3$ in a similar manner to one above. For a semishape $(x_1,x_2,x_3,x_4,x_5)$ and for $i\in [5]$, we call $x_i$ the \textit{$i$-th term} of the semishape.

\begin{proof}[Proof of Theorem~\ref{mtheorem=Wshape}]
Throughout the proof, fix a prime $p\geq 3$. As we see later, in the proof, we will employ Corollary~\ref{mcorollary=multicolor} for the system $\mathcal{S}_{\mathrm{W}}(p)$ to deal with Case~$2$ below.

The complete list of point-configurations of degenerate ($\mathcal{S}_{\mathrm{W}}(p)$-)semishapes is the following. Here by the \textit{point-configuration} of a semishape, we mean the set of disjoint points, without multiplicities or labellings.
\begin{itemize}
  \item One non-degenerate $4$-AP $(x,y,z,w)$: with labellings, there are two possibilities, $(x_1,x_3,x_2=x_5,x_4)$ and $(x_2,x_1=x_4,x_3,x_5)$.
  \item One non-degenerate $3$-AP $(x,y,z)$: with labellings, there are two possibilities, $(x_1=x_2,x_3=x_4,x_5)$ and $(x_1,x_2=x_3,x_4=x_5)$.
  \item Two distinct points $(x,y)$: with labellings, it is of the form $(x_1=x_3=x_5,x_2=x_4)$.
  \item One point $x_1=x_2=x_3=x_4=x_5$.
\end{itemize}
Note that if there exists a non-degenerate $4$-AP in $A$, then we, in particular, have a non-degenerate $3$-AP in $A$.

Take $A\subseteq \mathbb{F}_p^n$ that is weakly $\mathcal{S}_{\mathrm{W}}(p)$-free. Set $t:=\left\lceil\frac{2}{7} \#A\right\rceil \in \mathbb{N}$. We divide our proof into the following two cases.

\begin{enumerate}
  \item[Case~$1$.] There do not exist $t$ disjoint non-degenerate $3$-APs in $A$.
  \item[Case~$2$.] There \textit{does} exist $t$ disjoint non-degenerate $3$-APs in $A$.
\end{enumerate}

\noindent
\textbf{Case~$1$.} In this case, pick a collection of non-degenerate disjoint $3$-APs in $A$ of the maximal size; remove all points appearing in this collection from $A$. Write $A'$ for the resulting subset of $A$. Observe that we have 
\[
\#A'\geq \#A-3(t-1)\geq \frac{\#A}{7}.
\]
The key to the proof for Case~$1$ is the following: By construction, the $A'$ above is weakly $\mathcal{S}_{3\mathrm{AP}}(p)$-free. Hence, by the aforementioned work  of Ellenberg--Gijswijt on $\mathrm{ex}^{\sharp}_{\mathcal{S}_{\mathrm{3AP}}}(n,p)$, we obtain that
\[
\#A'\leq (\Lambda_{1,\frac{1}{3},p-1})^n.
\]
Therefore, in Case~$1$, we have that
\begin{align*}
\#A&\leq 7(\Lambda_{1,\frac{1}{3},p-1})^n.
\end{align*}

\noindent
\textbf{Case~$2$.} Pick a collection of size $t$ of disjoint non-degenerate $3$-APs in $A$. Observe that for each non-degenerate $3$-AP $(a,a',a'')$ (forming a $3$-AP in this order) in $A$, $(a,a,a',a',a'')$ is a semishape in $A$. Hence, we define the following set of disjoint semishapes in $A$:
\[
M=\{(a,a,a',a',a''): \textrm{ $(a,a',a'')$ is a non-degenerated $3$-AP picked in the collection}\}.
\]
By construction, $\#M=t$ holds. For each $i\in [5]$, define $X_i$ as the set of the $i$-th terms of the semishapes in $M$. By disjointness, we have the following:
\begin{itemize}
   \item $\#X_1=\#X_2=\#X_3=\#X_4=\#X_5=t$,
   \item $X_1=X_2$ and $X_3=X_4$,
   \item Three subsets  $X_1(=X_2)$, $X_3(=X_4)$ and $X_5$ of $A$ are pairwise disjoint.
\end{itemize}
Set $\tilde{X}:=X_1\times X_2\times X_3\times X_4\times X_5$. The following two lemmas, specially Lemma~\ref{lemma=extendability}, are the key to the proof for Case~$2$. 

\begin{lemma}\label{lemma=shape}
Every semishape $(x_1,x_2,x_3,x_4,x_5)$ in $\tilde{X}$ satisfies that $x_1=x_2$ and that $x_3=x_4$.
\end{lemma}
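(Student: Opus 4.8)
The plan is to prove Lemma~\ref{lemma=shape} by exploiting the very special structure of the sets $X_i$ built from the collection $M$: since $X_1=X_2$, $X_3=X_4$, and the three sets $X_1$, $X_3$, $X_5$ are pairwise disjoint, any putative semishape in $\tilde{X}$ has its coordinates drawn from three mutually disjoint ``bins'', and we can read off which coordinates can coincide purely from the combinatorics of the list of degenerate point-configurations enumerated just above.

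First I would take an arbitrary semishape $(x_1,x_2,x_3,x_4,x_5)\in\tilde X$, so that $x_1,x_2\in X_1$, $x_3,x_4\in X_3$, $x_5\in X_5$, and by pairwise disjointness $x_1,x_2\notin X_3\cup X_5$, $x_3,x_4\notin X_1\cup X_5$, $x_5\notin X_1\cup X_3$. Then I would go through the complete list of point-configurations of degenerate semishapes (and also note that a non-degenerate $\mathcal{S}_{\mathrm{W}}(p)$-shape cannot occur here, since $A$ is weakly $\mathcal{S}_{\mathrm{W}}(p)$-free and $X_1\cup X_3\cup X_5\subseteq A$): in the ``$4$-AP'' cases the identifications are $x_2=x_5$ or $x_1=x_4$, both impossible because $x_2\ne x_5$ and $x_1\ne x_4$ lie in disjoint bins; in the ``two distinct points'' case the identification $x_1=x_3=x_5$ is impossible for the same reason; and the one-point case forces $x_1=x_2$ and $x_3=x_4$ trivially. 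The only remaining possibilities are the two ``$3$-AP'' configurations, $(x_1=x_2,\,x_3=x_4,\,x_5)$ and $(x_1,\,x_2=x_3,\,x_4=x_5)$; the second again forces $x_2=x_3$ and $x_4=x_5$ across disjoint bins, hence cannot happen, while the first gives exactly the desired conclusion $x_1=x_2$, $x_3=x_4$. This yields the lemma.

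I do not expect any serious obstacle here; the argument is a finite case check against the enumerated list, and the only thing to be careful about is to justify once and for all that that list is genuinely exhaustive (which is asserted in the paragraph preceding the statement) and that the disjointness bookkeeping is done correctly — in particular that every coordinate identification appearing in a degenerate configuration, other than $x_1=x_2$ and $x_3=x_4$, forces two elements lying in two of the three pairwise disjoint sets $X_1,X_3,X_5$ to be equal, a contradiction. If one wanted to avoid quoting the enumeration of configurations, an alternative is to plug $x_1=x_2$ and $x_3=x_4$ directly: from the first equation $x_1-x_2-x_3+x_4=0$ one gets $(x_1-x_2)=(x_3-x_4)$, and from the second $x_1-2x_3+x_5=0$; combined with the bin constraints one argues that $x_1-x_2$ and $x_3-x_4$ must both vanish. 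But the cleaner route is the case analysis against the classification already established, so that is what I would write up.
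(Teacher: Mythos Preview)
Your proposal is correct and takes essentially the same approach as the paper's proof: both use the classification of degenerate $\mathcal{S}_{\mathrm{W}}(p)$-semishapes together with the pairwise disjointness of $X_1(=X_2)$, $X_3(=X_4)$, $X_5$ to rule out every configuration except the one forcing $x_1=x_2$ and $x_3=x_4$. The paper phrases the argument by contradiction (assuming $x_1\ne x_2$, then separately $x_3\ne x_4$), but the underlying case check against the enumerated list is identical to yours.
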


\begin{proof}
Suppose, to the contrary, that a semishape $(x_1,x_2,x_3,x_4,x_5)$ in $\tilde{X}$ satisfies $x_1\ne x_2$. Since $A$ is weakly $\mathcal{S}_{\mathrm{W}}(p)$-free, every semishape in $\tilde{X}$ must be degenerate. By the classification of degenerate shapes, stated at the beginning of the present proof, it must satisfy $x_2=x_3$, $x_2=x_5$, $x_1=x_4$ or $x_1=x_3=x_5$. If $x_2=x_3$, then it contradicts $X_2\cap X_3=\emptyset$. Similarly, we may draw a contradiction in every option above; hence $x_1=x_2$. We also conclude that $x_3=x_4$ in a similar manner.
\end{proof}

Define the subset $B$ of $X_1\times X_3$ by 
\[
B:=\{(x,y)\in X_1\times X_3:\textrm{ $(x,y)$ is $(1,3)$-extendable in $\tilde{X}$}\}.
\]

\begin{lemma}[Key lemma to the proof]\label{lemma=extendability}
For $(x,y)\ne (x',y')$ in $B$, we have that 
\[
x-y\ne x'-y'. 
\]
In particular, it holds that $\#B\leq p^n$.
\end{lemma}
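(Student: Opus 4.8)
The plan is to exhibit, from a hypothetical collision $x-y=x'-y'$ with $(x,y)\neq(x',y')$ in $B$, an $\mathcal{S}_{\mathrm{W}}(p)$-shape inside $A$, contradicting weak $\mathcal{S}_{\mathrm{W}}(p)$-freeness; the bound $\#B\leq p^n$ is then immediate, since the map $(x,y)\mapsto x-y$ from $B$ to $\mathbb{F}_p^n$ will be injective and $\#\mathbb{F}_p^n=p^n$.

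First I would record what $(1,3)$-extendability says concretely. If $(x,y)\in B$, pick a semishape in $\tilde X$ with first term $x$ and third term $y$. By Lemma~\ref{lemma=shape} this semishape is of the form $(x,x,y,y,z)$ with $z\in X_5$, and substituting into the second equation of \eqref{SW} forces $z=2y-x$. Thus the only consequence of $(x,y)\in B$ that I use is that $x\in X_1$, $y\in X_3$, and $2y-x\in X_5$.

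Now suppose $(x,y)\neq(x',y')$ lie in $B$ and $x-y=x'-y'$. Rewriting this as $x-x'=y-y'=:d$, we get $d\neq 0$ (otherwise $(x,y)=(x',y')$), hence $x\neq x'$ and $y\neq y'$. Set $z:=2y-x\in X_5$ and consider the tuple $(x,x',y,y',z)$, all of whose entries lie in $A$. A direct check shows it solves \eqref{SW}: the first equation reads $x-x'-y+y'=d-d=0$, and the second reads $x-2y+z=0$ by the choice of $z$. For distinctness: $x\neq x'$ and $y\neq y'$ as just noted, while every remaining pair among $x,x',y,y',z$ consists of points drawn from two of the three mutually disjoint sets $X_1$, $X_3$, $X_5$. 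Hence $(x,x',y,y',z)$ is an $\mathcal{S}_{\mathrm{W}}(p)$-shape in $A$, a contradiction. This proves the first assertion, and the second follows at once.

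As for where the difficulty lies: the computations are routine, and the only real idea is to spot the correct witnessing tuple $(x,x',y,y',z)$, obtained by welding the parallelogram degeneracy $x-x'-y+y'=0$ onto the $3$-AP $x-2y+z=0$. The one place where the hypotheses of Case~$2$ are genuinely needed is the distinctness of the five points, which rests on the pairwise disjointness of $X_1$, $X_3$, $X_5$ (a consequence of having chosen \emph{disjoint} $3$-APs) together with Lemma~\ref{lemma=shape}, used to pin down the form of a $(1,3)$-extension. I do not anticipate any further obstacle.
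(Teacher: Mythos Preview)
Your proof is correct and follows essentially the same route as the paper: from a collision $x-y=x'-y'$ you build the semishape $(x,x',y,y',2y-x)\in\tilde X$ (the paper uses the symmetric variant $(x',x,y',y,2y'-x')$). The only cosmetic difference is in the final contradiction: the paper simply notes that this semishape in $\tilde X$ has $x_1\ne x_2$ and invokes Lemma~\ref{lemma=shape}, whereas you verify full distinctness of the five points via the disjointness of $X_1,X_3,X_5$ to contradict weak $\mathcal{S}_{\mathrm{W}}(p)$-freeness of $A$ directly; either works.
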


\begin{proof}
First, we prove the former assertion. Suppose, to the contrary, that $x-y=x'-y'$ and $(x,y)\ne (x',y')$ are in $B$. Then it follows that $x\ne x'$ and $y\ne y'$. A semishape in $\tilde{X}$ $(1,3)$-extended from $(x,y)$ is of the from $(x,x,y,y,2y-x)$; similarly that from $(x',y')$ is of the from $(x',x',y',y',2y'-x')$. The key observation here is that then
\[
(x_1,x_2,x_3,x_4,x_5):=(x',x,y',y,2y'-x')\in \tilde{X}
\]
is a semishape in $\tilde{X}$ with $x_1\ne x_2$. It contradicts Lemma~\ref{lemma=shape}. See figure~\ref{fig6} for a geometric meaning of this construction.
\begin{figure}[h]
\begin{center}
\includegraphics[width=6.5cm]{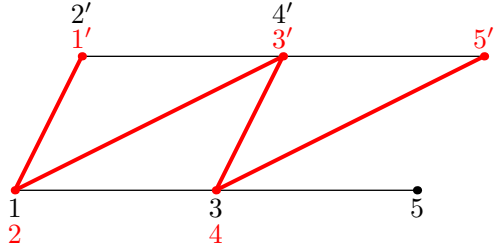}
%\vspace*{0.3cm}
\caption{Extracting an $\mathcal{S}_{\mathrm{W}}$-semishape}\label{fig6}
\end{center}
\end{figure} 

Secondly, we deduce the latter assertion. It immediately follows because the map $B\ni (x,y)\mapsto x-y\in \mathbb{F}_p^n$ is injective by the former assertion.
\end{proof}

The following lemma enables us to find a subset $M'$ of $M$ of not too small size that is $5$-colored $\mathcal{S}_{\mathrm{W}}(p)$-free; recall Definition~\ref{definition=multicolored}.

\begin{lemma}\label{lemma=color2}
There exists a subset $M'\subseteq M$ of $M$ that satisfies the following:
\begin{itemize}
  \item $\#M'\geq \frac{t^2}{4p^n}$,
  \item $M'$ is $5$-colored $\mathcal{S}_{\mathrm{W}}(p)$-free.
\end{itemize}
\end{lemma}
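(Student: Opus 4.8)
The plan is to take $M'$ to be the sub-collection of $M$ indexed by a suitable set $I\subseteq[t]$, to force $5$-colored $\mathcal{S}_{\mathrm{W}}(p)$-freeness merely by forbidding certain index pairs from lying in $I$, and to obtain $\#M'\geq t^2/(4p^n)$ by a one-step probabilistic deletion driven by the estimate $\#B\leq p^n$ of Lemma~\ref{lemma=extendability}.

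First I would pin down what a \emph{non-diagonal} colored semishape must look like. Write the members of $M$ as $(a_i,a_i,a_i',a_i',a_i'')$, where $(a_i,a_i',a_i'')$ is the $i$-th picked non-degenerate $3$-AP; by disjointness the $3t$ points $a_i,a_i',a_i''$ are pairwise distinct, so $X_1(=X_2)$, $X_3(=X_4)$, $X_5$ each have exactly $t$ elements, indexed bijectively by $[t]$, and for every $I\subseteq[t]$ the restricted matching is automatically a perfect matching. Suppose $(a_{i_1},a_{i_2},a_{i_3}',a_{i_4}',a_{i_5}'')$ with $i_1,\dots,i_5\in I$ is a semishape. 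By Lemma~\ref{lemma=shape} it satisfies $a_{i_1}=a_{i_2}$ and $a_{i_3}'=a_{i_4}'$, hence $i_1=i_2=:j$ and $i_3=i_4=:k$ by distinctness, and the relation $x_1-2x_3+x_5=0$ forces $a_{i_5}''=2a_k'-a_j$. If $j=k$ this reads $a_{i_5}''=a_j''$, so $i_5=j$ and the semishape is diagonal. Thus a non-diagonal semishape produces $j\ne k$ with $(a_j,a_j,a_k',a_k',a_{i_5}'')\in\tilde X$ a semishape, i.e. $(a_j,a_k')$ is $(1,3)$-extendable, i.e. $(a_j,a_k')\in B$. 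Consequently, if $I$ contains no pair $j\ne k$ with $(a_j,a_k')\in B$, then $M'$ is $5$-colored $\mathcal{S}_{\mathrm{W}}(p)$-free.

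Next I would count the forbidden pairs. Every element of $B$ has the form $(a_j,a_k')$ with $j,k\in[t]$, and the $t$ diagonal pairs $(a_i,a_i')$ lie in $B$ and are distinct, so $W:=\{(j,k):j\ne k,\ (a_j,a_k')\in B\}$ satisfies $\#W=\#B-t$; in particular $t\leq\#B\leq p^n$ and $\#W\leq p^n$ by Lemma~\ref{lemma=extendability}. Put each $i\in[t]$ into a random set $I_0$ independently with probability $q:=t/(2p^n)\ (\leq 1/2)$. The expected number of pairs of $W$ with both coordinates in $I_0$ is $\#W\cdot q^2\leq p^n q^2$; deleting one coordinate of each such surviving pair yields $I\subseteq I_0$ containing no pair of $W$ and with $\#I\geq\#I_0-\#\{\text{surviving pairs}\}$. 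Therefore
\[
\mathbb{E}[\#I]\ \geq\ qt-p^n q^2\ =\ \frac{t^2}{2p^n}-\frac{t^2}{4p^n}\ =\ \frac{t^2}{4p^n},
\]
so some outcome gives $\#I\geq t^2/(4p^n)$; for such an $I$, setting $M':=\{(a_i,a_i,a_i',a_i',a_i''):i\in I\}$ finishes the argument. (Equivalently, the last step can be phrased as a Tur\'an / Caro--Wei independent-set bound in the graph on $[t]$ whose edges are the pairs of $W$, a graph with at most $p^n$ edges.)

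The only substantive step is the structural one linking non-diagonal colored semishapes with off-diagonal members of $B$; after that, Lemma~\ref{lemma=extendability} does the work and the remainder is routine. I expect the mild obstacle to be the bookkeeping of the degenerate cases --- tracking $j\ne k$ and $i_5\notin\{j,k\}$ so that ``non-diagonal'' is captured exactly --- rather than anything analytically deep.
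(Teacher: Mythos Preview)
Your proof is correct. The structural reduction---that any non-diagonal colored semishape in $\tilde X'$ forces an off-diagonal pair $(a_j,a_k')\in B$---is exactly the paper's observation, and your use of $\#B\le p^n$ from Lemma~\ref{lemma=extendability} is the right input.

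Where you diverge from the paper is in the extraction of $M'$. The paper follows Sauermann's original deterministic scheme: it separates $X_1$ into ``good'' points (those $x$ with fewer than $2p^n/t$ partners $y\in X_3$ with $(x,y)\in B$) and ``bad'' points, shows $\#X_{1,\mathrm{good}}\ge t/2$, and then runs a greedy algorithm that in each round adds one good point to $M'$ and deletes at most $2p^n/t$ points from $X_{1,\mathrm{good}}$, yielding at least $(t/2)/(2p^n/t)=t^2/(4p^n)$ rounds. You instead run a one-shot probabilistic alteration: sample each index with probability $q=t/(2p^n)$, then delete one endpoint from each surviving forbidden pair, and optimize $qt-p^nq^2$. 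Both arguments are standard devices for finding a large independent set in a graph on $t$ vertices with at most $p^n$ (directed) edges, and both land on the identical constant $t^2/(4p^n)$. Your route is slightly slicker and avoids the good/bad dichotomy; the paper's route is constructive and mirrors \cite{Sauermann} more closely. Neither buys a better bound.

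One minor remark on your closing comment: you do not in fact need to track whether $i_5\in\{j,k\}$. Once $j\ne k$ are both in $I$ and $(a_j,a_k')\in B$, the semishape already witnesses failure of $5$-colored freeness regardless of where $i_5$ sits; conversely, if no such $(j,k)$ survives, the argument $j=k\Rightarrow i_5=j$ finishes. So the bookkeeping is even lighter than you anticipated.
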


\begin{proof}
First, we explain the idea, originated in \cite{Sauermann}, to construct $M'$. Find `many' $x\in X_1$ such that for $j\in \{3,4,5\}$, `not too many' $y\in X_j$ satisfies that $(x,y)$ is $(1,j)$-extandable in $\tilde{X}$. By Lemma~\ref{lemma=shape}, a semishape in $\tilde{X}$ is completely determined by the pair $(x_1,x_3)$. Hence, in the idea above, we only need to discuss the case where $j=3$. 

Now we proceed to the precise argument. Set
\[
X_{1,\mathrm{bad}}:=\left\{x\in X_1: \#\{y\in X_3:(x,y)\in B\}\geq \frac{2p^n}{t}\right\}
\]
and $X_{1,\mathrm{good}}:=X_1\setminus X_{1,\mathrm{bad}}$. From the view point of the idea described above, points $x$ in $X_{1,\mathrm{good}}$ may be considered as `good' points. Then, by Lemma~\ref{lemma=extendability}, we have that
\[
\#X_{1,\mathrm{bad}}\leq \frac{p^n}{\frac{2p^n}{t}}\leq \frac{t}{2}.
\]
Hence, we obtain that $\#X_{1,\mathrm{good}}= \#X_1-\#X_{1,\mathrm{bad}}\geq \frac{t}{2}$. Run the following algorithm with initially setting $M':=\emptyset$:
\begin{enumerate}
  \item[(I)] Pick $x\in X_{1,\mathrm{good}}$. Add to $M'$ the semishape in $M$ whose first term is $x$.
  \item[(II)] For the element $x$ chosen in (I), consider all $y\in X_3$ with $(x,y)\in B$. For each such $y$, remove from $X_{1,\mathrm{good}}$ the first term  (if it was still in $X_{1,\mathrm{good}}$) of the semishape  in $M$ whose third term is $y$. 

If  $X_{1,\mathrm{good}}=\emptyset$ after the procedure above, then halt. Otherwise, return to (I).
\end{enumerate}
Note that $x$ itself as in (I) is removed from $X_{1,\mathrm{good}}$ in (II). By construction of the original $X_{1,\mathrm{good}}$, in (II), there are at most $\frac{2p^n}{t}$ points which are removed from $X_{1,\mathrm{good}}$. Therefore, the flow `(I) and (II)' may  be repeated for at least 
\[
\frac{\frac{t}{2}}{\frac{2p^n}{t}}= \frac{t^2}{4p^n}
\]
times. Hence the resulting $M'$ satisfies that $\#M'\geq \frac{t^2}{4p^n}$.

In what follows, we will prove that $M'$ above is $5$-colored $\mathcal{S}_{\mathrm{W}}(p)$-free. Here we change the indices and express $M'$ as 
\[
M'=\{(x_{1,i},x_{2,i},x_{3,i},x_{4,i},x_{5,i}):1\leq i\leq \#M'\}.
\]
For each $j\in[5]$, define $X'_j\subseteq X_j$ as the set of the $j$-th terms of semishapes in $M'$. Set $\tilde{X}':=X_1'\times X_2'\times X_3'\times X_4'\times X_5'$. By construction of $M'$, it follows that $(x_{1,i},x_{3,j})\in X_1'\times X_3'$ is  $(1,3)$-extendable in $\tilde{X}'$ if and only if $i=j$. Suppose now that 
$(x_{1,i_1},x_{2,i_2},x_{3,i_3},x_{4,i_4},x_{5,i_5})$ is a semishape in $\tilde{X}'$. Then Lemma~\ref{lemma=shape} implies that $x_{1,i_1}=x_{2,i_2}$ and $x_{3,i_3}=x_{4,i_4}$. Since $M'$ is a collection of disjoint semishapes, it follows that $i_1=i_2$ and $i_3=i_4$. Moreover, the argument above shows that $i_1=i_3$. Since $x_{1,i_1}-2x_{3,i_3}+x_{5,i_5}=0$,
we have that $x_{5,i_5}=x_{5,i_1}$; by disjointness, it implies that $i_5=i_1$.  Therefore, we conclude that $i_1=i_2=i_3=i_4=i_5$, as desired.
\end{proof}
Apply Corollary~\ref{mcorollary=multicolor}, with respect to the system $\mathcal{S}_{\mathrm{W}}(p)$, to this $M'$. Then by using the constant $C_{\mathrm{W}}(p)$ as in Theorem~\ref{mtheorem=Wshape}, we have that
\[
\left(\frac{t^2}{4p^n}\leq\right)\quad \#M'\leq C_{\mathrm{W}}(p)^n.
\]
Hence, we obtain that $t\leq 2\left(\sqrt{C_{\mathrm{W}}(p)\cdot p}\right)^n$. Recall that $t=\left\lceil\frac{2}{7}\#A\right\rceil$. Therefore, in Case~$2$, we conclude that
\[
\#A\leq  7\left(\sqrt{C_{\mathrm{W}}(p)\cdot p}\right)^n.
\]

Let us unify these two cases to close up our proof. Note that $\Lambda_{1,\frac{1}{3},p-1}< C_{\mathrm{W}}(p)$. Indeed, if $\alpha,\beta\geq 0$ satisfy $3\alpha+2\beta=2$, then $\max\{\alpha,\beta\}\geq \frac{2}{5}>\frac{1}{3}$. (Note also that $\Lambda_{m,\alpha,p-1}$ is strictly increasing both in $m$ and $\alpha$.) Hence, we conclude that in both cases, the inequality
\[
\#A\leq  7 \left(\sqrt{C_{\mathrm{W}}(p)\cdot p}\right)^n
\]
holds true. It complete the proof.
\end{proof}
Recall that in Remark~\ref{remark=lessthanpW}, we show that for every $p\geq 3$, it holds that $C_{\mathrm{W}}(p)<p$. 

\begin{proof}[Proof of Theorem~$\ref{theorem=Wshape}$]
The estimate from above immediately follows from Theorem~\ref{mtheorem=Wshape} and Remark~\ref{remark=lessthanpW}. If $n$ is sufficiently large depending on $p$, the estimate from below is obtained from Remark~\ref{remark=lowerW}.
\end{proof}

\begin{remark}\label{remark=difference}
Here we describe main differences between the argument in the original paper \cite{Sauermann} on $\mathrm{ex}^{\sharp}_{\mathcal{T}_{p,\mathrm{zerosum}}}(n)$ and one in our proof on $\mathrm{ex}^{\sharp}_{\mathcal{S}_{\mathrm{W}}}(n,p)$. More precisely, we explain backgrounds of our strategy to focus on a non-degenerate $3$-AP; recall the argument at the beginning of this section. There is a \textit{gap} for a straightforward application of Sauermann's original argument to nour case, which we describe below. In our case, we may \textit{not} obtain a similar result to Lemma~\ref{lemma=extendability}, which is an adaptation of \cite[Lemma~3.1]{Sauermann}, for a certain type of semishapes. More precisely, this issue occurs if the point-configuration of the semishape is a distinct two points. Namely, if the semishape is of the form $(x_1=x_3=x_5,x_2=x_4)$, where $x_1\ne x_2$. Indeed, let $(x,y,x,y,x)$ and $(x',y',x',y',x')$ be two semishapes obtained by $(1,2)$-extensions. Our task in this case is to construct another semishape $(x_1,x_2,x_3,x_4,x_5)$ out of the two semishapes above that violates $x_1=x_3=x_5$. However, it is \textit{totally impossible}. Indeed, at least two of $x_1$, $x_3$ and  $x_5$ must coincide, then all of them must be the same. Hence, an argument relying on a variant of \cite[Lemma~3.1]{Sauermann} \textit{breaks down} in this case. 

Nevertheless, we have completed our proof of Theorem~\ref{mtheorem=Wshape}; \textit{the trick here} is to put the annoying case above into Case~$1$. Case~$1$ in the proof is treated \textit{all at once by the aforementioned upper bound of Ellenberg--Gijswijt} of non-degenerate $3$-AP-free sets. Thus \textit{we have bypassed a variant of Lemma~$\ref{lemma=extendability}$} for the problematic case above. Furthermore, we obtain the mltiplicative constant part `$7$' in Theorem~\ref{mtheorem=Wshape}, which is good. We do it by putting the case where there exists a collection of large size of disjoint non-degenerate $4$-APs into Case~$2$. If we treat that case separately from Case~$2$, then we would obtain `$15$' instead of `$7$'.

In our short paper \cite{MimuraTokushigeX}, we also study another system of equations
\[\label{astk}
\left\{\begin{aligned}
x_1+x_2-2x_{2k+1}&=0,\\
x_3+x_4-2x_{2k+1}&=0,\\
x_5+x_6-2x_{2k+1}&=0,\\
\vdots\qquad\quad\\
x_{2k-1}+x_{2k}-2x_{2k+1}&=0.
\end{aligned}
\right.
\tag{$\mathcal{S}_{\ast_{k}}$}
\]
and provides an upper bound of $\mathrm{ex}^{\sharp}_{\mathcal{S}_{\ast_k}}(n,p)$. (An $\mathcal{S}_{\ast_k}(p)$-shape consists of  $k$ $3$-APs, sharing the middle term, such that all $2k+1$ terms are distinct; it represents a `$k$-star shape'.) In that case, we focus on an $\mathcal{S}_{\ast_{k-1}}(p)$-shape, instead of a non-degenerate $3$-AP, and argue in an induction on $k\geq 1$.
\end{remark}

By extracting a part of the proof of Theorem~\ref{mtheorem=Wshape}, we obtain an upper bound of `weakly `parallelogram-free' sets. We leave the proof to the reader.

\begin{corollary}[Avoiding a parallelogram]\label{corollary=parallelogram}
Consider the following $\mathbb{Z}$-equation
\[\label{SP}
x_1-x_2-x_3+x_4=0.\tag{$\mathcal{S}_P$}
\]
Then, for every prime $p\geq 3$ and for every $n\in \mathbb{N}$, we have that 
\[
\mathrm{ex}^{\sharp}_{\mathcal{S}_P}(n,p)\leq 7\left(\sqrt{\Lambda_{1,\frac{1}{4},p-1}\cdot p}\right)^n.
\]
\end{corollary}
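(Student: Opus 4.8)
The plan is to run the two‑case argument from the proof of Theorem~\ref{mtheorem=Wshape}, once more singling out a non‑degenerate $3$-AP as the ``focus shape'', but now with $\mathcal{S}_P$ itself as the target equation. Fix $p\geq 3$, let $A\subseteq\mathbb{F}_p^n$ be weakly $\mathcal{S}_P(p)$-free, set $t:=\left\lceil\frac{2}{7}\#A\right\rceil$, and split according to whether $A$ contains $t$ pairwise disjoint non‑degenerate $3$-APs.

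Suppose first that it does not. Delete from $A$ all points belonging to a maximal family of pairwise disjoint non‑degenerate $3$-APs; this removes at most $3(t-1)$ points, leaving $A'\subseteq A$ with $\#A'\geq\frac{1}{7}\#A$. The key observation, absent from the $\mathrm{W}$-shape proof, is that $A'$ is then a \emph{Sidon set}: a solution of $x_1+x_4=x_2+x_3$ in $A'$ with all four entries distinct would be an $\mathcal{S}_P(p)$-shape, which is impossible as $A'\subseteq A$; a solution with a coincidence between the pairs $\{x_1,x_4\}$ and $\{x_2,x_3\}$ is trivial; and a solution with a coincidence inside one of these two pairs is a non‑degenerate $3$-AP, which is impossible by maximality. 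Hence $\#A'(\#A'-1)\leq p^n-1$, so $\#A'\leq(\sqrt p)^n+1$, and since $\Lambda_{1,\frac{1}{4},p-1}\geq 1$ this gives $\#A\leq 7\left(\sqrt{\Lambda_{1,\frac{1}{4},p-1}\cdot p}\right)^n$.

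Suppose now that such a family exists. Fix $t$ pairwise disjoint non‑degenerate $3$-APs, turn each $(a,a',a'')$ (in this order) into the degenerate semishape $(a,a',a',a'')$, and let $M$ be the resulting set of $t$ pairwise disjoint semishapes, with $X_1,\dots,X_4$ the sets of its $i$-th terms; then $X_2=X_3$, the sets $X_1$, $X_2$, $X_4$ are pairwise disjoint, and $\#X_i=t$. One then proves the analogues of Lemmas~\ref{lemma=shape} and~\ref{lemma=extendability}: every $\mathcal{S}_P(p)$-semishape $(x_1,x_2,x_3,x_4)$ in $X_1\times X_2\times X_3\times X_4$ satisfies $x_2=x_3$, since otherwise weak freeness forces another coincidence among the $x_i$, each impossible by pairwise disjointness of $X_1$, $X_2(=X_3)$ and $X_4$, so all four points would be distinct, a contradiction; and, writing $B$ for the set of $(1,2)$-extendable pairs $(x,y)\in X_1\times X_2$, the map $(x,y)\mapsto x-y$ on $B$ is injective, because $x-y=x'-y'$ with $(x,y)\neq(x',y')$ in $B$ would make $(x,y',y,2y'-x')$ an $\mathcal{S}_P(p)$-semishape in $X_1\times X_2\times X_3\times X_4$ with $y\neq y'$, contradicting the previous assertion; hence $\#B\leq p^n$. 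Applying the greedy selection from the proof of Lemma~\ref{lemma=color2} produces $M'\subseteq M$ with $\#M'\geq t^2/(4p^n)$ that is $4$-colored $\mathcal{S}_P(p)$-free. Since $\mathcal{S}_P(p)$ is balanced and irreducible with parameters $(r_1,r_2,L,m_{\mathcal{T}})=(4,0,1,1)$, which satisfy inequality~\eqref{bigstar} (as $2>1$) and give $\tilde{C}_{(4,0,1,1)}(p)=\Lambda_{1,\frac{1}{4},p-1}$, Corollary~\ref{mcorollary=multicolor} yields $\#M'\leq(\Lambda_{1,\frac{1}{4},p-1})^n$. Therefore $t\leq 2\left(\sqrt{\Lambda_{1,\frac{1}{4},p-1}\cdot p}\right)^n$ and $\#A\leq\frac{7}{2}t\leq 7\left(\sqrt{\Lambda_{1,\frac{1}{4},p-1}\cdot p}\right)^n$, which together with the first case proves the claim.

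I expect the main obstacle to be Case~$1$: a literal imitation of the $\mathrm{W}$-shape proof would only give $\#A'\leq(\Lambda_{1,\frac{1}{3},p-1})^n$ from Ellenberg--Gijswijt, and this is too weak because $\Lambda_{1,\frac{1}{3},p-1}>\sqrt{\Lambda_{1,\frac{1}{4},p-1}\cdot p}$ in general. The point is that the set left after deleting the $3$-APs is not merely $3$-AP-free but also parallelogram‑free, hence Sidon, which replaces the cap‑set bound by the far smaller $(\sqrt p)^n$. The two extendability‑type lemmas and the greedy extraction are then a routine transcription of the corresponding steps in the proof of Theorem~\ref{mtheorem=Wshape}, with the additive $O(1)$ terms absorbed into the constant $7$ using $\Lambda_{1,\frac{1}{4},p-1}>1$.
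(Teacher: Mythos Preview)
Your proof is correct, and your diagnosis of Case~1 is exactly right: a literal transcription of the $\mathrm{W}$-shape argument would invoke only the Ellenberg--Gijswijt bound on $A'$, and since $\Lambda_{1,1/3,p-1}>\Lambda_{1,1/4,p-1}$ (for $p=3$ one has $\Lambda_{1,1/3,2}^2\approx 7.60>7.39\approx 3\,\Lambda_{1,1/4,2}$), this does not imply the stated inequality for all $p$. Your observation that $A'$ is in fact a \emph{Sidon set} --- parallelogram-freeness inherited from $A$ kills the four-distinct-point case, and $3$-AP-freeness kills the ``coincidence inside a pair'' case --- is precisely the missing ingredient. Case~2 then goes through verbatim: the $(1,2)$-extendability lemma and the greedy extraction feed into Corollary~\ref{mcorollary=multicolor} with $\tilde C_{(4,0,1,1)}(p)=\Lambda_{1,1/4,p-1}$, as you say.

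One cosmetic point: the step ``$\#A'\le(\sqrt p)^n+1$ together with $\Lambda_{1,1/4,p-1}>1$ gives $\#A\le 7(\sqrt{\Lambda_{1,1/4,p-1}\cdot p})^n$\,'' is not literally valid for $p=3$, $n=1$ (there $\sqrt3+1\approx 2.732$ exceeds $\sqrt{3\Lambda_{1,1/4,2}}\approx 2.718$), but the statement is then trivial since $\#A\le p$; for $n\ge 2$ one has $\Lambda_{1,1/4,p-1}^{n/2}\ge \Lambda_{1,1/4,p-1}\ge 1+1/p\ge 1+p^{-n/2}$ and the absorption is clean.

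The paper gives no details beyond ``extract a part of the proof of Theorem~\ref{mtheorem=Wshape}'', so there is nothing to compare against directly. It is worth noting a shorter variant that may be what the authors have in mind: instead of $3$-APs, split $A$ into two halves $A_1\sqcup A_3$ and use the always-available two-point semishapes $(a,a,b,b)$. No Case~1 is then needed, the analogue of Lemma~\ref{lemma=shape} forces $x_1=x_2$ and $x_3=x_4$ for every semishape in $A_1\times A_1\times A_3\times A_3$, and the injectivity of $(x,y)\mapsto x-y$ on all of $A_1\times A_3$ already yields $\lfloor\#A/2\rfloor^2\le p^n$ --- which is again the Sidon bound. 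So in a sense your Case~1 argument is the whole story, and the corollary as stated is considerably weaker than what either route actually proves.
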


\section{Further problems}\label{section=problems}

\begin{problem}\label{problem=dominant}
Fix a balanced and irreducible system $\mathcal{S}$ of $\mathbb{Z}$-equations and $($sufficiently large$)$ prime $p$. Is there any procedure to replace $\mathcal{S}$ with another system $\tilde{\mathcal{S}}$ in the same variables such that $\tilde{\mathcal{S}}$ `behaves better' than $\mathcal{S}$ from the aspect of our paper and that $\mathcal{S}(p)$-semishapes coincide with $\tilde{\mathcal{S}}$-semishapes?
\end{problem}

For instance, consider the following system of $\mathbb{Z}$-equations:
\begin{equation*}\label{SPP}
\left\{
\begin{aligned}
x_1-x_2-x_3+x_4&=0,\\
x_2-x_3-x_4+x_5&=0.
\end{aligned}
\right.
\tag{$\mathcal{S}_{PP}$}
\end{equation*}
It represents `two overlapping parallelograms'. We may see that for every $p\geq 3$, $\mathcal{S}_{PP}(p)$-semishapes coincide with $\mathcal{S}_{W}(p)$-semishapes. Hence, for every $p\geq 3$ and $n$, we have that
\[
\mathrm{ex}_{\mathcal{S}_{PP}}^{\sharp}(n,p)=\mathrm{ex}_{\mathcal{S}_{W}}^{\sharp}(n,p).
\]
However, the system $\mathcal{S}_{PP}$ `\textit{behaves worse}' than $\mathcal{S}_{W}$ from the view point of the current paper. For instance, the parameters of $\mathcal{S}_{PP}$ are $(r_1,r_2,L)=(2,3,2)$, and it gives a worse bound if we apply Theorem~\ref{mtheorem=slicerank}, compared with $(r_1,r_2,L)=(3,2,2)$ for the system $\mathcal{S}_{\mathrm{W}}$. Moreover, the system $\mathcal{S}_{PP}$ does not admit any dominant subsystem. Hence we are unable to apply dominant reductions directly to $\mathcal{S}_{PP}$; hence it is, at the first glance, unclear how to obtain a non-trivial lower bound of $\mathrm{ex}_{\mathcal{S}_{PP}}^{\sharp}(n,p)$.

We provide another example. Recall the system \eqref{S3} from Lemma~\ref{lemma=dominantreduction}.  For every $p\geq 3$, the following system gives the same semishapes:
\begin{equation*}
\left\{
\begin{aligned}
x_1-2x_2+x_6&=0,\\
x_1-2x_3+x_5&=0,\\
-2x_4+x_5+x_6&=0.
\end{aligned}
\right.
\end{equation*}
This system itself is dominant and irreducible; we do not need repeat dominant reductions for several times for it. From this aspect, the system above `behaves  better' than $\mathcal{S}_3$.

We note that Problem~\ref{problem=dominant} is visible only after we extend the framework from a single equation to a system of equations.

\begin{problem}\label{problem=Sauermann}
Find more systems $\mathcal{T}$ of $\mathbb{F}_p$-equations for which variants of the argument of Sauermann \cite{Sauermann} applies to obtain non-trivial upper bounds of $\mathrm{ex}^{\sharp}_{\mathcal{T}}(n)$.
\end{problem}

As we discussed in Remark~\ref{remark=difference}, the main difficulty here is extending the key lemma \cite[Lemma~3.1]{Sauermann}. 

In the original paper \cite{Sauermann}, she considers extendability for a pair $(x,y)$. If we can extend the framework to one for a triple $(x,y,z)$ with providing a non-trivial upper bound of $\mathrm{ex}^{\sharp}$, then we may provide broader classes of systems for Problem~\ref{problem=Sauermann}.

\section*{Acknowledgments}
The authors are grateful to the members, Wataru Kai, Akihiro Munemasa, Shin-ichiro, Seki and Kiyoto Yoshino,  of the ongoing seminar on the Green--Tao theorem (on arithmetic progressions in the set of prime numbers) at Tohoku University launched in  October, 2018. Thanks to this seminar, the first-named author has been intrigued with the subject of this paper.  They thank Masayuki Asaoka and Kota Saito for comments. Masato Mimura is supported in part by JSPS KAKENHI Grant Number JP17H04822, and
Norihide Tokushige is supported by JSPS KAKENHI Grant Number JP18K03399.

\bibliographystyle{amsalpha}
\bibliography{W.bib}

\newcommand{\etalchar}[1]{$^{#1}$}
\providecommand{\bysame}{\leavevmode\hbox to3em{\hrulefill}\thinspace}
\providecommand{\MR}{\relax\ifhmode\unskip\space\fi MR }
% \MRhref is called by the amsart/book/proc definition of \MR.
\providecommand{\MRhref}[2]{%
  \href{http://www.ams.org/mathscinet-getitem?mr=#1}{#2}
}
\providecommand{\href}[2]{#2}
\begin{thebibliography}{BCC{\etalchar{+}}17}

\bibitem[BCC{\etalchar{+}}17]{BCCGNSU}
Jonah Blasiak, Thomas Church, Henry Cohn, Joshua~A. Grochow, Eric Naslund,
  William~F. Sawin, and Chris Umans, \emph{On cap sets and the group-theoretic
  approach to matrix multiplication}, Discrete Anal. (2017), Paper No. 3, 27.
  \MR{3631613}

\bibitem[Beh46]{Behrend}
F.~A. Behrend, \emph{On sets of integers which contain no three terms in
  arithmetical progression}, Proc. Nat. Acad. Sci. U. S. A. \textbf{32} (1946),
  331--332. \MR{0018694}

\bibitem[Cio90]{Cioranescu}
Ioana Cioranescu, \emph{Geometry of {B}anach spaces, duality mappings and
  nonlinear problems}, Mathematics and its Applications, vol.~62, Kluwer
  Academic Publishers Group, Dordrecht, 1990. \MR{1079061}

\bibitem[CLP17]{CrootLevPach}
Ernie Croot, Vsevolod~F. Lev, and P\'{e}ter~P\'{a}l Pach,
  \emph{Progression-free sets in {$\Bbb Z^n_4$} are exponentially small}, Ann.
  of Math. (2) \textbf{185} (2017), no.~1, 331--337. \MR{3583357}

\bibitem[EG17]{EllenbergGijswijt}
Jordan~S. Ellenberg and Dion Gijswijt, \emph{On large subsets of {$\Bbb F^n_q$}
  with no three-term arithmetic progression}, Ann. of Math. (2) \textbf{185}
  (2017), no.~1, 339--343. \MR{3583358}

\bibitem[FP17]{FoxPham}
Jacob Fox and Huy~Tuan Pham, \emph{Popular progression differences in vector
  spaces {II}}, preprint, arXiv:1708.08486v1 (2017).

\bibitem[FS18]{FoxSauermann}
Jacob Fox and Lisa Sauermann, \emph{Erd{\H{o}}s-{G}inzburg-{Z}iv constants by
  avoiding three-term arithmetic progressions}, Electron. J. Combin.
  \textbf{25} (2018), no.~2, Paper 2.14, 9. \MR{3799432}

\bibitem[MT19]{MimuraTokushigeX}
Masato Mimura and Norihide Tokushige, \emph{Avoiding a star of three-term
  arithmetic progressions}, available in arXiv (2019).

\bibitem[Sau19]{Sauermann}
Lisa Sauermann, \emph{On the size of subsets of $\mathbb{F}_p^n$ without $p$
  distinct elements summing to zero}, preprint, arXiv:1904.09560v2 (2019).

\bibitem[Tao16]{Tao}
Terence Tao, \emph{A symmetric formulation of the
  {C}root-{L}ev-{P}ach-{E}llenberg-{G}ijswijt capset bound}, blog post,
  http://terrytao.wordpress.com/2016/05/18/a (2016).

\end{thebibliography}

\end{document}